\newtheorem{theorem}{Theorem}
\newtheorem{lemma}{Lemma}
\newtheorem{corollary}{Corollary}
\newtheorem{proposition}{Proposition}
\theoremstyle{definition}
\newtheorem{definition}{Definition}
\newtheorem{example}{Example}
\newtheorem{notation}[theorem]{Notation}
\newtheorem{conjecture}{Conjecture}
\theoremstyle{remark}
\newtheorem{remark}{Remark}
\numberwithin{equation}{section}
\newcommand{\Real}{{\mathbb R}}
\newcommand{\Z}{\mathbb Z}
\newcommand{\eps}{\varepsilon}
\newcommand{\f}{\mathbf{f}}
\newcommand{\x}{\mathbf{x}}
\newcommand{\y}{\mathbf{y}}
\newcommand{\z}{\mathbf{z}}
\newcommand {\hide}[1]{}
\begin{document}
\title[Monotone functions and maps]{Monotone functions and maps}

\author{Saugata Basu}
\address{Department of Mathematics,
Purdue University, West Lafayette, IN 47907, USA}
\email{sbasu@math.purdue.edu}
\author{Andrei Gabrielov}
\address{Department of Mathematics,
Purdue University, West Lafayette, IN 47907, USA}
\email{agabriel@math.purdue.edu}
\author{Nicolai Vorobjov}
\address{
Department of Computer Science, University of Bath, Bath
BA2 7AY, England, UK}
\email{nnv@cs.bath.ac.uk}
\thanks{2000 Mathematics Subject Classification 14P15, 57Q99.}

\keywords{o-minimal structure, semi-monotone set, monotone map, regular cell}

\dedicatory{To Professor Heisuke Hironaka on the occasion
     of his $80$th birthday}

\begin{abstract}
In \cite{BGV} we defined semi-monotone sets, as open bounded sets,
definable in an o-minimal structure over the reals
(e.g., real semialgebraic or subanalytic sets), and
having connected intersections with all
translated coordinate cones in $\Real^n$.
In this paper we develop this theory further by defining
{\em monotone functions} and {\em maps}, and studying their fundamental geometric properties.
We prove several equivalent conditions for a bounded continuous definable function or map
to be monotone.
We show that the class of graphs of monotone maps is closed under intersections with
affine coordinate subspaces and projections to coordinate subspaces.
We prove that the graph of a monotone map is a topologically regular cell.
These results generalize and expand the corresponding results obtained in \cite{BGV}
for semi-monotone sets.
\end{abstract}
\maketitle

\section*{Introduction}

This paper is a continuation of the work initiated in an earlier paper
\cite{BGV} where the authors introduced a particular class of sets, called semi-monotone,
being open, bounded and definable in an o-minimal structure over the reals
(e.g., real semialgebraic or subanalytic).
One of the main results in \cite{BGV} is that semi-monotone sets are topologically regular cells.
Here we generalize this result to the sets of any codimension.
The immediate motivation for defining this class of definable sets
was to prove the existence of definable triangulations ``compatible''  with
a given definable function -- more precisely, the following conjecture.

\begin{conjecture}[\cite{BGV}]
\label{con:triang}
Let $f:\> K \to \Real$, be a definable function on a compact definable set $K \subset \Real^m$.
Then there exists a definable triangulation of $K$ such that, for each $n\le\dim K$ and for each
open $n$-simplex $\Delta$ of the triangulation,
\begin{enumerate}
\item
the graph $\Gamma :=\{(\x,t)|\> \x\in\Delta,\,t=f(\x)\}$ of the restriction of
$f$ on $\Delta$ is a topologically regular $n$-cell (see Definition~\ref{def:regular});
\item
either $f$ is a constant on $\Delta$ or
each non-empty level set $\Gamma \cap\{t= {\rm const} \}$ is a topologically regular $(n-1)$-cell.
\end{enumerate}
\end{conjecture}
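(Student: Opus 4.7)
The plan is to produce a definable cell decomposition of $K$ adapted to $f$ whose open cells are all ``monotone'' in the sense of this paper, and then to pass from this cell decomposition to a definable triangulation. The first ingredient would be a monotone analogue of cylindrical cell decomposition: any definable function $f:K\to\Real$ admits a finite partition of $K$ into definable cells on each of which, after a suitable permutation of coordinates, the graph of $f$ restricted to the cell is the graph of a monotone map whose domain is a semi-monotone set. This step is strictly finer than ordinary cell decomposition, since semi-monotonicity demands connectedness of intersections with \emph{all} translated coordinate cones, not merely monotonicity of successive projections.

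Granted such a decomposition, conclusion~(1) is precisely the theorem announced in the abstract that the graph of a monotone map is a topologically regular cell. For conclusion~(2), I would invoke the stated closure of the class of graphs of monotone maps under intersection with affine coordinate subspaces: any non-empty level set $\Gamma\cap\{t=\mathrm{const}\}$ is such an intersection, hence itself the graph of a monotone map in one fewer variables, and thus a topologically regular $(n-1)$-cell by the same theorem applied in dimension $n-1$. The case in which $f$ is constant on $\Delta$ makes (2) vacuous.

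To convert the monotone cell decomposition into a triangulation, apply the classical o-minimal triangulation theorem, taking as distinguished definable sets the closures of all cells produced above together with the graph $\Gamma$ and the level sets appearing in (2). Compatibility of the triangulation then guarantees that each open simplex $\Delta$ is contained in a single cell $C$ of the monotone decomposition, so that the graph of $f|_\Delta$ is an open subset of the regular cell arising as the graph of $f|_C$; one then verifies that the regular-cell property descends from $C$ to the sub-simplex $\Delta$ via the compatible triangulation of $\overline{C}$.

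The hard part will be the first step. The present paper develops only the static theory of a single monotone map --- closure properties and the regular-cell theorem --- but does not yet provide a procedure producing monotone cells from arbitrary definable input. Constructing such a procedure requires controlling intersections with the continuous family of all translated coordinate cones simultaneously, and is substantially more delicate than ordinary cylindrical decomposition; the remaining technical issue of transferring topological regularity from $C$ to $\Delta$ should be routine in comparison.
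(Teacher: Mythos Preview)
The statement you are attempting is labelled \emph{Conjecture} in the paper and is not proved there; the paper explicitly presents itself only as ``a step nearer'' to it. So there is no proof in the paper to compare against, only the programme sketched in the Introduction: first obtain a decomposition of $K$ into cells on each of which the graph of $f$ is the graph of a monotone map, and then pass to a triangulation by \emph{generalized barycentric subdivision} of those cells. Your first step (a monotone analogue of cylindrical decomposition) is exactly the paper's first step, and you are right that the paper does not supply it; that is the acknowledged open part of the programme.

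Your second step, however, diverges from the paper's plan and contains a genuine gap. You propose to apply the ordinary o-minimal triangulation theorem compatibly with the monotone cells and then argue that regularity of graphs and of level sets ``descends'' from a cell $C$ to each open simplex $\Delta\subset C$. This does not work for condition~(2). The restriction of a monotone map to a semi-monotone subset is in general \emph{not} monotone (the paper makes this point explicitly in Remark~\ref{re:monotone_funct} and the example following it), so $f|_\Delta$ need not be monotone even when $f|_C$ is. Consequently the level set $\{\x\in\Delta:\ f(\x)=b\}$ is merely the intersection of the regular $(n-1)$-cell $\{\x\in C:\ f(\x)=b\}$ with the simplex $\Delta$, and such an intersection has no reason to be a regular $(n-1)$-cell. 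The closure property you invoke (graphs of monotone maps are stable under intersection with affine coordinate subspaces) applies to slices by $\{t=\text{const}\}$, not to intersection with the faces of an arbitrary simplex. This is precisely why the paper's programme calls for barycentric subdivision of the monotone cells themselves rather than an unrelated compatible triangulation: one needs the simplices to \emph{be} graphs of monotone maps, not merely to sit inside such graphs.
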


Conjecture \ref{con:triang} is part of a larger program of obtaining
combinatorial classification of monotone families of definable sets discussed in \cite{BGV}.
The triangulation described in the conjecture can be viewed as a
topological resolution of singularities of definable functions.

The role of semi-monotone sets in the proposed proof of the above conjecture is as follows.
We first hope to prove the existence of a definable, regular cell decomposition of $K$,
such that the properties (i) and (ii) are satisfied for each cell of the decomposition.
The triangulation will then be obtained by generalized barycentric
subdivision of these cells. In order for such an approach to work, one needs
a good supply of definable cells guaranteed to be regular.

The semi-monotone sets fit this requirement.
Non-empty semi-monotone sets are topologically regular cells \cite{BGV}.
Moreover, the class of semi-monotone sets is stable under maps that permute the coordinates
of $\Real^n$.
However, non-empty
semi-monotone sets are open (and hence full dimensional) definable subsets of $\Real^n$.
In this paper, we introduce a certain class of definable maps $\f:X \to \Real^k$ where $X$ is
a semi-monotone subset of $\Real^n$.
We call these maps {\em monotone maps} (see Definition \ref{def:monot_map} below).
We give several characterizations of monotone maps.
Our main result (Theorem~\ref{th:regularcell} below) states that the
graphs of monotone maps are topologically regular cells.
This implies the topological regularity of toric cubes (Corollary~\ref{cor:toric})
and settles a conjecture proposed in \cite{Sturmfelsetal2012}.

We also prove that monotone maps satisfy a suitable generalization
of the coordinate exchange property satisfied by semi-monotone sets -- namely,
if ${\bf F} \subset \Real^n \times \Real^k$ is a graph of a monotone map
$\f:\> X \to \Real^k$, then for any subset of $n$ coordinates such
that the image $X'$ of ${\bf F}$ under projection to the span of these coordinates
is $n$-dimensional, $X'$ is a semi-monotone set, and
${\bf F}$ is the graph of a monotone map on $X'$ (see Theorem \ref{th:exchange} below).

For $k=0$, we recover the main statements about semi-monotone sets proved  in \cite{BGV}.
Moreover, the proof here is simpler than in \cite{BGV}.
As a result we now have a full supply of regular cells (of all dimensions),
and hence we are a step nearer to the proof of Conjecture~\ref{con:triang}.

Note that Conjecture~\ref{con:triang} does not follow from results in the literature
on the existence of definable triangulations adapted to a given finite
family of definable subsets of $\Real^n$ (such as \cite{VDD, Coste}), since all the proofs
use a preparatory linear change of coordinates in order for the given definable sets
to be in a good position with respect to coordinate projections.
Since we are concerned with the graphs and the level sets of a function, in order
to prove Conjecture~\ref{con:triang} we are not allowed to make any change of coordinates
which involves the last coordinate.
Paw{\l}ucki \cite{Pawlucki} has considered the problem of obtaining a regular
cell decomposition with a restriction on the allowed change in coordinates --
namely, only permutations of the coordinates are allowed. In this setting Paw{\l}ucki
obtains a decomposition whose full dimensional cells are regular.
Note that even if this decomposition can be carried through so that all cells (including
those of positive codimension) are regular, it would not be enough for our
purposes since we cannot allow a change of the last coordinate.

\subsection*{Acknowledgements}
The first author was supported in part by NSF grant CCF-0915954.
The second author was supported in part by NSF grants DMS-0801050 and DMS-1067886.

\section{Semi-monotone sets}

In what follows we fix an o-minimal structure over $\Real$, and consider only sets and
maps that are definable in this structure (unless explicitly stated otherwise).
The reader may assume that all sets and maps in this paper are either real
semialgebraic or subanalytic.

\begin{definition}\label{def:semi-monotone}
Let $L_{j, \sigma, c}:= \{ \x=(x_1, \ldots ,x_n) \in \Real^n|\> x_j \sigma c \}$
for $j=1, \ldots ,n$, $\sigma \in \{ <,=,> \}$, and $c \in \Real$.
Each intersection of the kind
$$C:=L_{j_1, \sigma_1, c_1} \cap \cdots \cap L_{j_m, \sigma_m, c_m} \subset \Real^n,$$
where $m=0, \ldots ,n$, $1 \le j_1 < \cdots < j_m \le n$, $\sigma_1, \ldots ,\sigma_m \in \{<,=,> \}$,
and $c_1, \ldots ,c_m \in \Real$, is called a {\em coordinate cone} in $\Real^n$.

Each intersection of the kind
$$S:=L_{j_1, =, c_1} \cap \cdots \cap L_{j_m, =, c_m} \subset \Real^n,$$
where $m=0, \ldots ,n$, $1 \le j_1 < \cdots < j_m \le n$,
and $c_1, \ldots ,c_m \in \Real$, is called an {\em affine coordinate subspace} in $\Real^n$.

In particular, the space $\Real^n$ itself is both a coordinate cone and an affine coordinate
subspace in $\Real^n$.
\end{definition}

Throughout the paper we assume that the empty set is connected.

\begin{definition}[\cite{BGV}]\label{def:set}
An open (possibly, empty) bounded set $X \subset \Real^n$ is called {\em semi-monotone} if
for each coordinate cone $C$  the intersection $X \cap C$ is connected.
\end{definition}

\begin{proposition}[\cite{BGV}, Lemma~1.2, Corollary~1.4]\label{prop:proj}
The projection of a semi-monotone set $X$ on any coordinate subspace, and the
intersection $X \cap C$ with a coordinate cone $C$ are semi-monotone sets.
\end{proposition}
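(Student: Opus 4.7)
Part 1, that the projection $\pi(X)$ of $X$ onto a coordinate subspace $\Real^k$ is semi-monotone, follows at once from the definitions. Openness and boundedness of $\pi(X)$ are clear, and for any coordinate cone $C' \subset \Real^k$ the preimage $\pi^{-1}(C')$ is again a coordinate cone in $\Real^n$ (the same conditions, imposed only on the surviving coordinates). Hence $X \cap \pi^{-1}(C')$ is connected by semi-monotonicity of $X$, and $\pi(X) \cap C' = \pi(X \cap \pi^{-1}(C'))$ is connected as a continuous image.

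Part 2, that $X \cap C$ is semi-monotone for any coordinate cone $C$, is more delicate because the intersection $C \cap C'$ of two coordinate cones need not itself be a coordinate cone: conditions $x_j > c_1$ in $C$ and $x_j < c_2$ in $C'$ together force a two-sided bounded condition $c_1 < x_j < c_2$. I therefore plan to prove the following stronger \emph{box lemma}, from which Part 2 is immediate. For every semi-monotone $X \subset \Real^n$ and every generalized box $D = I_1 \times \cdots \times I_n$, where each $I_j$ is $\Real$, a singleton, a (possibly unbounded) open half-line, or a bounded open interval, the intersection $X \cap D$ is connected.

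I would prove the box lemma by induction on $n$, and within each $n$ by a sub-induction on the number $k$ of indices with $I_j$ a bounded two-sided interval. If some $I_j = \{c\}$ is a singleton, one first observes that $X \cap \{x_j = c\}$ is semi-monotone inside the hyperplane $\{x_j = c\} \cong \Real^{n-1}$ (any coordinate cone of the hyperplane extends to a coordinate cone of $\Real^n$ by adjoining $x_j = c$), and the remaining factors of $D$ form a generalized box one dimension down, so the claim follows by the induction on $n$. If $D$ has no equalities and $k = 0$, then $D$ itself is a coordinate cone and the conclusion is part of the semi-monotone hypothesis. For $k \geq 1$ I would pick $j$ with $I_j = (a,b)$, let $E$ denote $D$ with this condition removed, and apply the sub-induction to conclude that $X \cap E$ is connected. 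The projection $(\alpha,\beta) := \pi_j(X \cap E) \subset \Real$ is then a bounded open interval, and the non-empty fibers of $\pi_j|_{X \cap D}$ lie precisely over $(a^*,b^*) := (\alpha,\beta) \cap (a,b)$. Each such fiber $Y_t = X \cap E \cap \{x_j = t\}$ is connected by the equality case applied to the box $E \cap \{x_j = t\}$ in $\Real^{n-1}$, whose complexity is $k-1$.

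Assembling these connected fibers into a connected total space is the main step. I would invoke o-minimal Hardt triviality for the definable map $\pi_j \colon X \cap D \to (a^*, b^*)$, producing finitely many interior breakpoints $s_1 < \cdots < s_{r-1}$ such that over each complementary open subinterval $\pi_j$ restricts to a definably trivial fibration with connected fiber, so that the total space is connected over each such subinterval. The main obstacle, and the real content of the argument, is bridging across each $s_i$. I plan to show that every $q \in Y_{s_i}$ lies in the closure of both adjacent open strips: because $X$ is open in $\Real^n$ and $D$ has no equality conditions, a small Euclidean ball centered at $q$ is contained in $X \cap D$ and necessarily meets fibers at levels both slightly below and slightly above $s_i$. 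Combined with the connectedness of $Y_{s_i}$, this forces the left and right trivial strips to lie in the same connected component of $X \cap D$; iterating across all breakpoints yields the connectedness of $X \cap D$.
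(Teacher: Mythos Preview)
Your argument is correct. Part~1 is immediate, and your box lemma for Part~2 is sound: the singleton reduction, the base case $k=0$, and the Hardt-triviality assembly with the openness-based bridging across breakpoints all work as written.

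The paper does not itself prove this proposition; it is quoted from \cite{BGV} without argument. That said, the paper supplies, immediately afterward, a tool that renders your Hardt-triviality step unnecessary. Lemma~\ref{le:connected} states that if a definable set $Y$ is connected and every slice $Y\cap\{x_j=b\}$ is connected, then both $Y\cap\{x_j<c\}$ and $Y\cap\{x_j>c\}$ are connected; its proof is a two-line path argument (reroute a connecting path through the slice at the appropriate $x_j$-level). In your inductive step you already have $X\cap E$ connected and every fiber $Y_t=(X\cap E)\cap\{x_j=t\}$ connected, so two successive applications of this lemma---first cutting by $\{x_j>a\}$, then by $\{x_j<b\}$---yield the connectedness of $X\cap D$ directly. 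This is exactly the mechanism the paper uses in the proof of Theorem~\ref{th:def2}, and it dispenses with the triviality theorem, the breakpoint analysis, and the closure argument altogether. Your route is not wrong, merely heavier; it would only pay for itself in a setting where slices are known to be connected but the elementary path-rerouting argument is unavailable.
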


The following necessary and sufficient condition of semi-monotonicity shows that in the
definition it is enough to consider the intersections of $X$ with affine coordinate subspaces.

\begin{theorem}\label{th:def2}
An open (possibly, empty) bounded set $X \subset \Real^n$ is semi-monotone if and only if
for each affine coordinate subspace $S$ the intersection $X \cap S$ is connected.
\end{theorem}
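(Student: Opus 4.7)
The forward direction is immediate: every affine coordinate subspace is a coordinate cone (the one with all $\sigma_i$ equal to ``$=$''), so semi-monotonicity of $X$ directly yields connectedness of $X \cap S$ for every affine coordinate subspace $S$. For the converse, my plan is to prove by induction on the number $k$ of strict inequalities in the description of a coordinate cone $C$ that $X \cap C$ is connected. The base case $k=0$, in which $C$ is itself an affine coordinate subspace, is exactly the hypothesis.

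For the inductive step with $k \ge 1$, I would single out one strict inequality in the description of $C$, say $L = \{x_j > c\}$ (the case ``$<$'' is symmetric), and let $C_0$ be $C$ with this inequality removed, so that $C = C_0 \cap L$ and $C_0$ has $k-1$ strict inequalities. Because each coordinate appears in at most one constraint of a coordinate cone, $x_j$ is unconstrained in $C_0$; consequently, for every $a \in \Real$ the cone $C_0 \cap \{x_j = a\}$ also has only $k-1$ strict inequalities. The inductive hypothesis then supplies that $X \cap C_0$ is connected and that each slice $X \cap C_0 \cap \{x_j = a\}$ is connected.

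The main step, and the only serious obstacle, is the following topological gluing lemma: if $Z \subset \Real^N$ is a connected open set, $\pi\colon Z \to \Real$ is the projection onto one coordinate, and every fiber $\pi^{-1}(a)$ is connected, then $\pi^{-1}(I)$ is connected for every open interval $I \subset \Real$. To prove it, suppose $\pi^{-1}(I) = U \sqcup V$ is a partition into disjoint nonempty open subsets of $Z$; replacing $I$ by $I \cap \pi(Z)$ (a nonempty open interval, since otherwise $\pi^{-1}(I)$ is empty and there is nothing to partition), one may assume every fiber over $I$ is nonempty. Each such fiber is connected and contained in $U \cup V$, hence lies entirely in $U$ or entirely in $V$, and this splits $I$ as $I_U \sqcup I_V$ with both parts nonempty. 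Both sets are open in $I$ because $\pi$ is an open map: a small open ball around any $z_0 \in U$ is carried onto a neighborhood of $\pi(z_0)$, producing nearby fibers that meet $U$ and therefore lie entirely in $U$. This contradicts the connectedness of the interval $I$. Applied to $Z = X \cap C_0$, viewed as an open subset of the affine coordinate subspace cut out by the equations of $C_0$, with $\pi = x_j$ and $I = (c, \infty)$, the lemma yields $X \cap C = \pi^{-1}(I)$ connected, completing the induction.
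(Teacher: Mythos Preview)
Your proof is correct and follows the same overall architecture as the paper: induction on the number $k$ of strict inequalities in the coordinate cone, with the inductive step powered by the lemma that if a connected set has all its $x_j$-slices connected, then its intersection with $\{x_j>c\}$ (or $\{x_j<c\}$) is connected. The paper isolates this as Lemma~\ref{le:connected} and proves it via path-connectedness: given two points in $X\cap\{x_j<c\}$, take a path in $X$ between them, stop the first time it reaches the slice through the higher point, and finish inside that slice. This relies on the o-minimal fact that definable connected sets are path-connected. Your proof of the same lemma is instead a clopen-separation argument using only that $Z=X\cap C_0$ is open in the ambient affine subspace, so that the coordinate projection is an open map and the induced partition of the interval $I$ into $I_U\sqcup I_V$ is by open sets. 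Your version is thus slightly more elementary---it avoids invoking path-connectedness of definable sets---at the cost of needing openness of $Z$, which is available here but not in every later application of Lemma~\ref{le:connected} in the paper.
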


\begin{lemma}\label{le:connected}
If $X \subset \Real^n$ is any connected definable set such that for some $j \in \{ 1, \ldots n \}$
and each $b \in \Real$ the intersection $X \cap \{x_j=b\}$ is connected,
then the sets $X \cap \{x_j<c\}$ and $X \cap \{x_j>c\}$ are connected for all
$c \in \Real$.
\end{lemma}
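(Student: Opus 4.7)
The plan is to prove connectedness of $X \cap \{x_j < c\}$ by contradiction; the case of $X \cap \{x_j > c\}$ is entirely symmetric. Suppose $X \cap \{x_j < c\}$ is disconnected. By o-minimality it has finitely many connected components $A_1, \ldots, A_r$ with $r \geq 2$, and each $A_l$ is both open and closed in $X \cap \{x_j < c\}$, hence also open in $X$.

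The first main step is to project onto the $j$-th coordinate axis; let $\pi$ denote this projection. Each image $\pi(A_l) \subset \Real$ is a connected definable subset, hence an interval. The fiber hypothesis forces these intervals to be pairwise disjoint: if some $b$ lay in two of them, the connected fiber $X \cap \{x_j = b\}$ would meet two distinct components of $X \cap \{x_j < c\}$. Since $\pi(X)$ is connected (continuous image of a connected set) and every fiber with $b < c$ lies in a single $A_l$, the intervals $\pi(A_l)$ in fact partition the interval $\pi(X) \cap (-\infty, c)$. Having at least two nonempty pieces, I select an adjacent pair $\pi(A)$, $\pi(A')$ with $\sup \pi(A) = \inf \pi(A') = \gamma$; after relabeling if needed I may assume $\pi(A) \subset (-\infty, \gamma)$ and $\gamma \in \pi(A')$, whence $\gamma < c$.

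The contradiction then comes from connectedness of $X$ itself. Since $A$ is a nonempty open proper subset of the connected set $X$, it cannot be closed in $X$, so $\mathrm{cl}_X(A) \setminus A \neq \emptyset$. Pick $q$ in this difference. By continuity of $x_j$ and the fact that every point of $A$ has $x_j$-coordinate less than $\gamma$, we get $x_j(q) \leq \gamma$. If $x_j(q) < \gamma$, then $q \in X \cap \{x_j < c\}$, and since $A$ is closed in that slab this would force $q \in A$, contradicting $q \notin A$. Hence $x_j(q) = \gamma$, placing $q$ inside the fiber $X \cap \{x_j = \gamma\}$. This fiber is connected by hypothesis and contains a point of $A'$ (because $\gamma \in \pi(A')$), so the whole fiber lies in $A'$; in particular $q \in A'$. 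But $A'$ is open in $X$ and disjoint from $A$, so some neighborhood of $q$ in $X$ avoids $A$, contradicting $q \in \mathrm{cl}_X(A)$.

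The step I expect to be the main technical point is the interval-partition argument on the line: one must argue cleanly that the projections of the components actually partition $\pi(X) \cap (-\infty, c)$ and then extract an adjacent pair together with a correctly assigned boundary value $\gamma$. Once this combinatorial picture is fixed, the final contradiction is almost formal and rests on only three ingredients: openness of each component in $X$, closedness of each component inside the slab $X \cap \{x_j < c\}$, and connectedness of the single fiber over $\gamma$.
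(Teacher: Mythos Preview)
Your argument is correct in substance and takes a genuinely different route from the paper. The paper exploits the equivalence of connectedness and path-connectedness for definable sets and argues directly: given $\y,\z \in X \cap \{x_j<c\}$ with $y_j \le z_j$, take any path in $X$ from $\y$ to $\z$, follow it until it first meets the level $\{x_j=z_j\}$ at a point $\mathbf{w}$ (so this initial arc stays in $\{x_j \le z_j\}\subset\{x_j<c\}$), then finish inside the connected fiber $X\cap\{x_j=z_j\}$. That is three lines. Your proof instead projects to the $x_j$-axis, analyzes the interval pattern of the component images, and derives a clopen contradiction; it is longer but avoids invoking path-connectedness and is more purely point-set topological in flavor.

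One step needs tightening. The phrase ``after relabeling if needed I may assume $\pi(A)\subset(-\infty,\gamma)$ and $\gamma\in\pi(A')$'' is not justified as written: for a fixed adjacent pair the shared endpoint $\gamma$ may lie in the \emph{left} interval, and swapping the names $A\leftrightarrow A'$ does not move $\gamma$ to the other side (and the mirrored argument can fail when the right interval reaches up to $c$). The clean fix is to take $A$ to be the leftmost component $A_1$. Then $\sup\pi(A_1)=\gamma_1<c$, and if $\gamma_1\in\pi(A_1)$ your own closure argument already shows $A_1$ is closed in $X$ (every $q\in\mathrm{cl}_X(A_1)$ has $x_j(q)\le\gamma_1<c$, hence lies in the slab, hence in $A_1$), making $A_1$ clopen, which is impossible. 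Therefore $\gamma_1\in\pi(A_2)$, and the pair $(A_1,A_2)$ has exactly the configuration your final paragraph needs.
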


\begin{proof}
Observe that connectedness is equivalent to path-connectedness for definable sets.
Consider any two points $\y,\> \z \in X \cap \{x_j<c\}$, then there is a path $\gamma \subset X$
connecting them.
Suppose, for definiteness, that $y_j \le z_j$.
Let ${\bf w}$ be the point in $\gamma \cap X \cap \{ x_j = z_j \}$ which is closest to $\y$ in
$\gamma$.
Then the union of the segment of $\gamma$ between $\y$ and ${\bf w}$, and
a path in $X \cap \{ x_j = z_j \}$, that connects ${\bf w}$ with
$\z$, is a path in $ X \cap \{x_j<c\}$ connecting $\y$ with $\z$.

The similar argument shows that $X \cap \{x_j>c\}$ is path-connected.
\end{proof}

\begin{proof}[Proof of Theorem\ref{th:def2}]
If $X$ is semi-monotone, then $X \cap S$ is always connected by the definition.

To prove the converse, observe that since $X$ is connected, and $X \cap \{x_j=b\}$ is connected
for every $j=1, \ldots ,n$ and every $b \in \Real$, the intersections $X \cap \{ x_j<c \}$ and
$X \cap \{ x_j>c \}$ are connected for every $c \in \Real$,
by Lemma~\ref{le:connected}.
The theorem follows, by the induction on the number of half-spaces which form a coordinate cone,
since these intersections can then be taken as $X$.
\end{proof}

\begin{corollary}\label{cor:def2}
An open (possibly, empty) bounded connected set $X \subset \Real^n$ is semi-monotone if and only if
the intersection $X \cap L_{j,=,c}$ is semi-monotone for every $j=1, \ldots ,n$ and every
$c \in \Real$.
\end{corollary}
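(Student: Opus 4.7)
The plan is to prove the two directions separately, with the forward direction essentially immediate and the converse proceeding via reduction to Theorem~\ref{th:def2}. The corollary is really a compact reformulation of semi-monotonicity in terms of codimension-one slices, so I expect no serious obstacle beyond careful bookkeeping of coordinate subspaces.

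For the forward direction, observe that for any $j \in \{1,\dots,n\}$ and any $c \in \Real$, the hyperplane $L_{j,=,c}$ is a coordinate cone (the case $m=1$, $\sigma_1 = \, =$ in Definition~\ref{def:semi-monotone}). Hence if $X$ is semi-monotone, Proposition~\ref{prop:proj} applied with this particular coordinate cone immediately yields that $X \cap L_{j,=,c}$ is semi-monotone.

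For the converse, assume $X \subset \Real^n$ is open, bounded, connected, and that $X \cap L_{j,=,c}$ is semi-monotone for every $j$ and every $c$. The strategy is to invoke Theorem~\ref{th:def2}: it suffices to check that $X \cap S$ is connected for every affine coordinate subspace $S \subset \Real^n$. If $S = \Real^n$, this is the hypothesized connectedness of $X$. Otherwise, $S$ is defined by at least one equation $x_j = c$, so $S \subset L_{j,=,c}$ for some choice of $j,c$, and $S$ can be regarded as an affine coordinate subspace of $L_{j,=,c} \cong \Real^{n-1}$. Then
\[
X \cap S \;=\; (X \cap L_{j,=,c}) \cap S,
\]
and since $X \cap L_{j,=,c}$ is semi-monotone in $L_{j,=,c}$, its intersection with the affine coordinate subspace $S$ of $L_{j,=,c}$ is connected, either directly from Definition~\ref{def:set} (affine coordinate subspaces are coordinate cones) or from Theorem~\ref{th:def2} applied inside $L_{j,=,c}$.

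The only real point requiring care is the identification of affine coordinate subspaces of $\Real^n$ contained in $L_{j,=,c}$ with affine coordinate subspaces of $L_{j,=,c}$ itself after the obvious identification $L_{j,=,c} \cong \Real^{n-1}$. Once this is observed, the argument is a one-step reduction and no induction on dimension is actually needed, since Theorem~\ref{th:def2} already does all the work of climbing from codimension-one slices to general coordinate cones.
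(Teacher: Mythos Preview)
Your proof is correct and follows essentially the same route as the paper: both directions reduce to Theorem~\ref{th:def2}, with the forward direction immediate from Proposition~\ref{prop:proj}. The paper's proof says only ``easily follows from Theorem~\ref{th:def2} by induction on $n$,'' whereas you observe (correctly) that once one has Theorem~\ref{th:def2} the induction is unnecessary---any proper affine coordinate subspace $S$ already lies in some hyperplane $L_{j,=,c}$, so the semi-monotonicity hypothesis on that single slice gives connectedness of $X\cap S$ directly.
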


\begin{proof}
The statement easily follows from Theorem~\ref{th:def2} by induction on $n$.
\end{proof}

\begin{definition}\label{def:sub-super}
A bounded upper semi-continuous function $f$ defined on a non-empty semi-monotone set
$X \subset \Real^n$ is {\em submonotone} if, for any $b \in \Real$, the set
$\{ \x \in X|\> f(\x)<b \}$ is semi-monotone.
A function $f$ is {\em supermonotone} if $(-f)$ is submonotone.
\end{definition}

\begin{notation}
Let the space $\Real^n$ have coordinate functions $x_1, \ldots ,x_n$.
Given a subset $I= \{ x_{j_1}, \ldots ,x_{j_m} \} \subset \{ x_1, \ldots , x_n \}$,
let $W$ be the linear subspace of $\Real^n$ where all coordinates in $I$ are equal to zero.
By a slight abuse of notation we will denote by ${\rm span} \{ x_{j_1},\ldots,x_{j_m} \}$
the quotient space $\Real^n/W$.
Similarly, for any affine coordinate subspace $S \subset \Real^n$
on which all the functions $x_j \not\in I$ are constant, we will identify $S$ with its
image under the canonical surjection to $\Real^n/W$.
\end{notation}

\begin{lemma}\label{le:infsup}
Let the function $f: X \to \Real$ be submonotone (respectively, supermonotone), and let
$X'$ be the image of the projection of $X$ to ${\rm span} \{ x_1, \ldots ,x_{n-1} \}$.
Then the function $\inf_{x_n}f: X' \to \Real$ (respectively, $\sup_{x_n}f: X' \to \Real$) is
submonotone (respectively, supermonotone).
\end{lemma}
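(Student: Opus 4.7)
The plan is to verify the definition of submonotonicity for $g := \inf_{x_n} f$ directly, with all the work hidden in the identity $\{g<b\} = \pi(\{f<b\})$, where $\pi : \Real^n \to \mathrm{span}\{x_1,\dots,x_{n-1}\}$ is the coordinate projection. The supermonotone case will then be immediate by replacing $f$ with $-f$ and noting that $\sup_{x_n}f = -\inf_{x_n}(-f)$.

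First, observe that $X' = \pi(X)$ is semi-monotone by Proposition~\ref{prop:proj}, and $g$ is bounded on $X'$ because $f$ is bounded on $X$. So the only substantive conditions to check are upper semi-continuity of $g$ and the semi-monotonicity of its sublevel sets $\{g<b\}$ in $X'$.

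The key step is the set-theoretic identity
\[
\{\mathbf{y} \in X'\mid g(\mathbf{y})<b\} \;=\; \pi\bigl(\{\x \in X\mid f(\x)<b\}\bigr),
\]
which uses nothing more than the definition of infimum (strict inequality with the infimum is equivalent to existence of a value achieving strict inequality). By the submonotonicity of $f$, the set $\{f<b\}$ is a semi-monotone subset of $\Real^n$, and therefore its projection is a semi-monotone subset of $\mathrm{span}\{x_1,\dots,x_{n-1}\}$, again by Proposition~\ref{prop:proj}. In particular this projection is open, which gives upper semi-continuity of $g$ (every strict sublevel set is open), and simultaneously gives the required semi-monotonicity of $\{g<b\}$.

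I do not anticipate a genuine obstacle here: the only point needing attention is that in the definition of submonotone function the underlying set must itself be a non-empty semi-monotone set, but both the non-emptiness of $X'$ (assuming $X$ nonempty) and its semi-monotonicity are handled by Proposition~\ref{prop:proj}. The empty case is covered by the convention that the empty set is connected and hence semi-monotone.
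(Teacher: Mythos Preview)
Your proposal is correct and follows essentially the same route as the paper: both hinge on the identity $\{\inf_{x_n} f < b\} = \pi(\{f<b\})$ and then invoke Proposition~\ref{prop:proj} to conclude this projection is semi-monotone. If anything, your write-up is slightly more careful than the paper's, since you explicitly verify the upper semi-continuity of $g$ (via openness of the projected sublevel sets), a point the paper's proof leaves implicit.
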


\begin{proof}
According to Proposition~\ref{prop:proj}, the set $X'$ is semi-monotone.
Assume that $f$ is submonotone.
Then for any $b \in \Real$ the image $X_b'$ of the projection of $\{ \x \in X|\> f(\x)<b \}$ to
${\rm span} \{ x_1, \ldots ,x_{n-1} \}$ coincides with
$\{ (x_1, \ldots ,x_{n-1}) \in X'|\> \inf_{x_n}f (\x) <b \}$.
Since, by Proposition~\ref{prop:proj}, $X_b'$ is semi-monotone, the function $\inf_{x_n} f$
satisfies the definition of submonotonicity.

The proof that $\sup_{x_n} f$ is supermonotone is analogous.
\end{proof}

\begin{proposition}[\cite{BGV}, Theorem~1.7]\label{prop:def_semi}
An open non-empty bounded set $X \subset \Real^n$ is semi-monotone if and only if it
satisfies the following conditions.
If $X \subset \Real^1$ then $X$ is an open interval.
If $X \subset \Real^n$ then
$$X= \{ (\x,y)|\> \x \in X',\> f(\x) <y< g(\x) \}$$
for a submonotone function $f$ and a supermonotone function $g$, both defined on a semi-monotone set
$X' \subset \Real^{n-1}$, with $f(\x)< g(\x)$ for all $\x \in X'$.
\end{proposition}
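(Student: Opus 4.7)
The plan is to prove both directions by induction on $n$. The base case $n=1$ is immediate: a nonempty open bounded subset of $\Real$ is semi-monotone if and only if it is connected, which happens if and only if it is an open interval.

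For the forward direction in the inductive step, given $X$ semi-monotone I would let $X' \subset \Real^{n-1}$ be the projection to the first $n-1$ coordinates (semi-monotone by Proposition~\ref{prop:proj}) and define $f(\x) := \inf\{y : (\x,y) \in X\}$ and $g(\x) := \sup\{y : (\x,y) \in X\}$, both finite by boundedness. Each vertical fiber $X \cap (\{\x\} \times \Real)$ is connected (as an intersection with an affine coordinate subspace), hence an open interval, so $X$ takes the claimed form with $f<g$ by openness. Upper/lower semicontinuity of $f,g$ follows from openness of $X$. Submonotonicity of $f$ holds because $\{\x \in X' : f(\x) < b\}$ equals the projection of the semi-monotone set $X \cap \{y < b\}$, and is semi-monotone by two applications of Proposition~\ref{prop:proj}; supermonotonicity of $g$ is symmetric.

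For the converse, assume $X$ has the stated form. Then $X$ is open (by semicontinuity of $f,g$) and bounded. I would apply Corollary~\ref{cor:def2}: it suffices that $X$ be connected and that $X \cap \{x_j = c\}$ be semi-monotone for every $j$ and $c$. Connectedness of $X$ follows from the standard open-projection-with-connected-fibers principle applied to $X \to X'$, which is an open surjection with interval fibers over a connected base. For $j < n$, the slice $X \cap \{x_j = c\}$ has the same fibered form over the semi-monotone base $X' \cap \{x_j = c\}$ (Proposition~\ref{prop:proj}), with the restrictions of $f, g$ remaining sub/super-monotone (Proposition~\ref{prop:proj} applied to $\{f<b\} \cap \{x_j=c\}$), so the inductive hypothesis applies.

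The case $j = n$ is the crux. The slice $X \cap \{y=c\}$ identifies with $Y_c := \{\x \in X' : f(\x) < c < g(\x)\} = \{f<c\} \cap \{g>c\} \subset \Real^{n-1}$. I would apply Corollary~\ref{cor:def2} one more time inside $\Real^{n-1}$: each sub-slice $Y_c \cap \{x_j = c'\}$ for $j<n$ is semi-monotone since $(X \cap \{x_j=c'\}) \cap \{y=c\}$ is the intersection of the already-treated semi-monotone set $X \cap \{x_j=c'\}$ with the coordinate cone $\{y=c\}$ (Proposition~\ref{prop:proj}). What remains is the connectedness of $Y_c$, which is the main technical obstacle. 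The natural strategy is to apply the forward direction inductively to $X'$, writing $X' = \{(\x'',z) : \x'' \in X'', h_1(\x'') < z < h_2(\x'')\}$ with $z = x_{n-1}$, so that each vertical fiber of $Y_c$ reduces to an intersection of two sub-level sets of one-variable sub/super-monotone functions --- hence an open interval. Combined with $\{f<c\} \cup \{g>c\} = X'$, forced by $f<g$, this should allow one to recognize $Y_c$ itself as satisfying the hypotheses of the Proposition in dimension $n-1$, and thereby close the induction.
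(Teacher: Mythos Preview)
The paper does not supply a proof of this proposition: it is quoted verbatim from \cite{BGV}, Theorem~1.7, with no argument given here. So there is nothing in the present paper to compare your proposal against.

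On the merits of your argument: the forward direction is correct and essentially forced. In the converse direction, everything up to and including the $j<n$ slices is fine. The genuine gap is exactly where you flag it: the connectedness of $Y_c=\{f<c\}\cap\{g>c\}$. Your sketch does not close it. Knowing that each $x_{n-1}$-fiber of $Y_c$ is an interval (which does follow, via the covering relation $\{f<c\}\cup\{g>c\}=X'$, from the fact that two open subintervals covering an interval must overlap) is not enough: to feed $Y_c$ back into the Proposition in dimension $n-1$ you would still have to show that the projection of $Y_c$ to $\Real^{n-2}$ is semi-monotone and that the induced lower and upper fiber-bounds are sub- and supermonotone. None of this is established, and it is not clear your outline produces it without essentially re-proving the target statement one dimension down for $Y_c$ rather than for $X$.

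One clean way to get connectedness of $Y_c$ is Mayer--Vietoris: with $A=\{f<c\}$ and $B=\{g>c\}$ open, connected, and $A\cup B=X'$, the vanishing of $\tilde H_1(X')$ forces $\tilde H_0(A\cap B)=0$. But this uses that semi-monotone sets are acyclic (they are regular cells), and invoking that here risks circularity, since in both \cite{BGV} and the present paper Proposition~\ref{prop:def_semi} is established \emph{before} the regular-cell theorem and is used as an input to it (see Lemma~\ref{le:Usemi-monotone}). You should look for a self-contained combinatorial argument for the connectedness of $Y_c$, or consult the original proof in \cite{BGV}.
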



The rest of the paper is organized as follows.

In Section~\ref{sec:monotone_functions}, we define the class of monotone functions.
These are a special type of definable functions $f:\> X \to \Real$ where $X$ is any non-empty
semi-monotone set.
We give several different characterizations of monotone functions (Lemma~\ref{le:def_monotone},
Corollary~\ref{cor:def_monotone}, and Theorem~\ref{th:monot_funct}).
In particular, Lemma \ref{le:def_monotone} should be compared with the
Definition~\ref{def:set} above of semi-monotone sets, and Theorem~\ref{th:monot_funct}
should be compared with the corresponding result, Corollary~\ref{cor:def2}, for semi-monotone sets.
We also prove a few useful topological results in this section.
In particular, we prove
a topological property of semi-monotone sets
and graphs of monotone functions that could be
viewed as
an analog of Sch\"onflies Theorem for semi-monotone
sets (see Lemma~\ref{le:divide} below).

In Section~\ref{sec:monotone_maps}, we generalize the definition of monotone functions and
define monotone maps $\f:\> X \to \Real^k$, where $X \subset \Real^n$ is a non-empty semi-monotone
subset of $\Real^n$ (see Definition~\ref{def:monot_map} below).
The definition is inductive (induction on $n$) and is more complicated than the definitions of
semi-monotone sets and monotone functions.
The combinatorial information regarding the dependence or independence
of the map $\f$  with respect to the various coordinates is more subtle and is recorded in a
matroid, $\bf m$,  of rank $n$ (see Theorem \ref{th:matroid}), which is associated with $\f$.
We prove several important properties of monotone maps and their associated matroids in
Section~\ref{sec:monotone_maps}.
In particular, we show that if ${\bf F} \subset \Real^{n+k}$ is the graph of a monotone map
$\f:\> X \to \Real^k$, where $X \subset \Real^n$, and
$I \subset \{ x_1, \ldots ,x_n,y_1, \ldots ,y_k \}$ is a basis of its associated matroid,
then the image of ${\bf F}$ under the projection to  ${\rm span}\> I$ is a semi-monotone set, and
${\bf F}$ is also the graph of a monotone map defined on this set.
We also identify a key property of monotone maps, of being \emph{quasi-affine}
(Definition~\ref{def:quasi-affine}, and Theorem~\ref{th:vdd}) which will be used later in an
essential way.

In Section~\ref{sec:equivalent_defs_of_monotone_maps}, we prove several different characterizations
of monotone maps including Theorem \ref{th:def_monotone_map} (which generalizes
Lemma~\ref{le:def_monotone} from functions to maps) and Theorem~\ref{th:monot_maps}
(generalizing similarly Theorem~\ref{th:monot_funct} from functions to maps).
We also prove
a topological result
namely
Theorem \ref{th:schonflies_for_monotone} (generalizing Lemma \ref{le:divide}).

It was proved in \cite{BGV} that every semi-monotone set is a topologically regular cell.
In Section~\ref{sec:regular} we generalize this theorem to graphs of monotone maps
(see Theorem~\ref{th:regularcell}).
The proof of Theorem~\ref{th:regularcell} is new even in the case of semi-monotone sets,
and simpler, as it avoids a more advanced machinery from PL topology that was used in \cite{BGV}.

In Section~\ref{sec:toric} we give an application of Theorem~\ref{th:regularcell}, proving that
{\em toric cubes}, introduced in \cite{Sturmfelsetal2012}, are topologically regular cells.

Section~\ref{sec:appendix} contains a digest of some propositions, mostly from PL topology,
which are used in the proofs in the preceding sections.

\section{Monotone functions}\label{sec:monotone_functions}

\begin{definition}[\cite{VDD}]\label{def:strictly}
A definable function $f$ on a non-empty open set $X \subset \Real^n$ is called {\em strictly
increasing in the coordinate} $x_j$, where $j=1, \ldots, n$, if for any two points $\x, \y \in X$
that differ only in the coordinate $x_j$, with $x_j <y_j$, we have $f(\x) < f(\y)$.
Similarly we define the notions of $f$ {\em strictly decreasing in the coordinate} $x_j$ and
$f$ {\em independent of the coordinate} $x_j$, the latter meaning that $f(\x)=f(\y)$ whenever
$\x, \y \in X$ differ only in the coordinate $x_j$.
\end{definition}

\begin{definition}\label{def:monotone}
A definable function $f$ defined on a non-empty semi-monotone set $X \subset \Real^n$ is called
{\em monotone} if it is
\begin{itemize}
\item[(i)]
both sub- and supermonotone (in particular, bounded and continuous, see
Definition~\ref{def:sub-super});
\item[(ii)]
either strictly increasing in, or strictly decreasing
in, or independent of $x_j$, for each $j=1, \ldots ,n$.
\end{itemize}
\end{definition}

\begin{example}
The function $x_1^2 + x_2^2$  on the semi-monotone set
$$X=\{ x_1>0,\> x_2 >0,\> x_1+x_2<1 \} \subset \Real^2$$
satisfies (ii) in Definition~\ref{def:monotone}, is submonotone but not supermonotone.
Hence this function is not monotone.
On the other hand, the function $x_1^2 + x_2^2$ on the semi-monotone set $(0,1)^2$ is monotone.
\end{example}

\begin{remark}\label{re:monotone_funct}
It follows from the definition that the restriction of a monotone function $f$ to a non-empty
set $X \cap \{ x_j=c \}$ for any $j=1, \ldots, n$ and $c \in \Real$ is a monotone function
in $n-1$ variables.
Lemma~\ref{le:def_monotone} below implies that the restriction of $f$ to a non-empty
$X \cap C$, where $C$ is a coordinate cone in $\Real^n$, is also a monotone function.
However, as exhibited in Example 2.3, the restriction of a monotone function
$f:\> X \to \Real$ to a semi-monotone subset $Y \subset X$ is not necessarily monotone.
\end{remark}

\begin{example}
The function on the semi-monotone set $X= (0,1) \times (-1,1) \subset \Real^2$
defined as:
$$x_1x_2\> \text{when}\> x_2 \ge 0,\> \text{and}\>
(1-x_1)x_2\> \text{when}\> x_2 \le 0,$$
is sub- and supermonotone, strictly increasing in $x_2$ on $X$, strictly increasing in $x_1$ on
$X \cap \{ x_2 \neq 0 \}$, but is constant on $X \cap \{ x_2=0 \}$.
Hence this function is not monotone.
\end{example}

\begin{definition}\label{def:nonconst}
We say that a monotone function $f$ is {\em non-constant} in $x_j$ if it is either strictly
increasing or strictly decreasing in $x_j$.
\end{definition}

Let a monotone function $f:\> X \to \Real$ on a semi-monotone set $X \subset \Real^n$
be non-constant in $x_n$.
Let
$$F:= \{(\x, y)|\> \x \in X,\, y=f(\x) \} \subset \Real^{n+1}$$
be the graph of $f$ and
$U$ be the projection of $F$ to ${\rm span} \{x_1, \ldots ,x_{n-1},y \}$.

\begin{lemma}\label{le:Usemi-monotone}
The set $U$ is semi-monotone, and
$$F= \{ (\x, y)|\> \x \in X,\, x_n=g(x_1, \ldots ,x_{n-1},y) \}$$
is the graph of a continuous function $g$ on $U$.
\end{lemma}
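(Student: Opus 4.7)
The plan is to reduce both conclusions to the cylindrical characterization of semi-monotone sets (Proposition~\ref{prop:def_semi}) combined with the stability of $\inf_{x_n}$ and $\sup_{x_n}$ for submonotone/supermonotone functions (Lemma~\ref{le:infsup}). Without loss of generality I take $f$ to be strictly increasing in $x_n$; the decreasing case is symmetric.

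For the graph identity, let $X'$ denote the projection of $X$ to ${\rm span}\{x_1,\ldots,x_{n-1}\}$ and fix $\tilde\x=(x_1,\ldots,x_{n-1})\in X'$. Since $X$ is semi-monotone, Proposition~\ref{prop:def_semi} gives that the $x_n$-fiber of $X$ over $\tilde\x$ is an open interval $(a(\tilde\x),b(\tilde\x))$. The restriction of $f$ to this interval is continuous and strictly increasing, hence a homeomorphism onto the open interval $(\alpha(\tilde\x),\beta(\tilde\x))$, where $\alpha(\tilde\x):=\inf_{x_n}f(\tilde\x,x_n)$ and $\beta(\tilde\x):=\sup_{x_n}f(\tilde\x,x_n)$. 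Defining $g(\tilde\x,y)$ to be the unique preimage of $y$ under this restricted $f$ then gives $F=\{(\x,y):\x\in X,\, x_n=g(x_1,\ldots,x_{n-1},y)\}$. Joint continuity of $g$ on $U$ follows from the continuity of $f$ on $X$, strict monotonicity in $x_n$, and the standard o-minimal fact that the inverse of a strictly monotone continuous definable function is continuous; o-minimality rules out any global pathology.

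For the semi-monotonicity of $U$, the description above yields
$$U=\{(\tilde\x,y):\tilde\x\in X',\ \alpha(\tilde\x)<y<\beta(\tilde\x)\}.$$
By Proposition~\ref{prop:proj}, $X'$ is semi-monotone. Since $f$ is monotone, it is both submonotone and supermonotone (Definition~\ref{def:monotone}(i)), so Lemma~\ref{le:infsup} makes $\alpha$ submonotone and $\beta$ supermonotone on $X'$. Strict monotonicity of $f$ in $x_n$ together with the non-degeneracy of the fiber $(a(\tilde\x),b(\tilde\x))$ forces $\alpha(\tilde\x)<\beta(\tilde\x)$ pointwise on $X'$. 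Applying Proposition~\ref{prop:def_semi} in its ``if'' direction then produces the semi-monotonicity of $U$, and boundedness of $U$ is inherited from the boundedness of $X$ and of $f$.

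The only point that requires genuine care is the global continuity of $g$; every other step is a direct appeal to results already established in the paper. I expect that, thanks to strict monotonicity in $x_n$ and o-minimality, continuity of $g$ reduces to invoking the standard continuity-of-inverse statement in one variable with parameters and will not require any additional machinery.
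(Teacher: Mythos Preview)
Your proof is correct and follows essentially the same route as the paper: both arguments identify $U$ as $\{(\tilde\x,y):\tilde\x\in X',\ \inf_{x_n}f<y<\sup_{x_n}f\}$, invoke Lemma~\ref{le:infsup} together with Proposition~\ref{prop:def_semi} for the semi-monotonicity of $U$, and obtain $g$ from the strict monotonicity of $f$ in $x_n$. The paper dispatches the continuity of $g$ in a single line (``continuous since $f$ is continuous and these functions have the same graph $F$''), whereas you spell out the one-variable inverse-with-parameters reasoning, but the content is the same.
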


\begin{proof}
Since the projection of $U$ to ${\rm span} \{ x_1, \ldots ,x_{n-1} \}$,
coincides with the projection $X'$ of $X$ to the same space, it is a semi-monotone set by
Proposition~\ref{prop:proj}.

Observe that
$$U=\{ (x_1, \ldots ,x_{n-1},y)|\>  (x_1, \ldots ,x_{n-1}) \in X',\>
\inf_{x_n}f < y < \sup_{x_n}f \}.$$

According to Lemma~\ref{le:infsup}, $\inf_{x_n}f$ is submonotone and $\sup_{x_n}f$ is supermonotone.
Moreover, $\sup_{x_n}f(x_1, \ldots ,x_{n-1}) < \inf_{x_n}f(x_1, \ldots ,x_{n-1})$,
for each $(x_1, \ldots ,x_{n-1}) \in X'$, since $f$ is non-constant in $x_n$.
Therefore  the set $U$ is semi-monotone, by Proposition~\ref{prop:def_semi}.

The function $g$ is defined, since $f$ is non-constant in $x_n$, and
continuous since $f$ is continuous and these functions have the same graph $F$.
\end{proof}

\begin{lemma}\label{le:def_monotone}
Let $f$ be a bounded continuous function defined on an open bounded non-empty set $X \subset \Real^n$,
either strictly increasing in, strictly decreasing
in, or independent of $x_j$, for each $j=1, \ldots ,n$.
Let $F$ be the graph of $f$.
The following three statements are equivalent.
\begin{itemize}
\item[(i)]
The function $f$ is monotone.
\item[(ii)]
For each coordinate cone $C$ in $\Real^{n+1}$ the intersection $C \cap F$ is connected.
\item[(iii)]
For each affine coordinate subspace $S$ in $\Real^{n+1}$ the intersection $S \cap F$ is connected.
\end{itemize}
\end{lemma}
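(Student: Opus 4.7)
The plan is to prove the cycle of implications (iii) $\Rightarrow$ (ii) $\Rightarrow$ (i) $\Rightarrow$ (iii); the implication (ii) $\Rightarrow$ (iii) is immediate since every affine coordinate subspace is itself a coordinate cone. For (iii) $\Rightarrow$ (ii), I would mimic the proof of Theorem~\ref{th:def2}, inducting on the number of strict inequalities defining a coordinate cone $C \subset \Real^{n+1}$ and applying Lemma~\ref{le:connected} to $F$ at each step: the slice $F \cap C_0 \cap \{x_j = b\}$ has strictly fewer strict inequalities, hence is connected by induction, and Lemma~\ref{le:connected} upgrades this to connectedness of $F \cap C_0 \cap \{x_j < c\}$ and $F \cap C_0 \cap \{x_j > c\}$. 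For (ii) $\Rightarrow$ (i), every coordinate cone $C' \subset \Real^n$ lifts to coordinate cones $C' \times \Real$, $C' \cap \{y < b\}$, and $C' \cap \{y > b\}$ in $\Real^{n+1}$; pushing (ii) forward through the homeomorphism $\pi \colon F \to X$ yields connectedness of $X \cap C'$, $\{f < b\} \cap C'$, and $\{f > b\} \cap C'$ respectively. Since each of these three sets is open and bounded, this gives that $X$ is semi-monotone and $f$ is both sub- and supermonotone, so $f$ is monotone.

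For (i) $\Rightarrow$ (iii) I split on whether a given affine coordinate subspace $S \subset \Real^{n+1}$ constrains $y$ or not. Writing $S = S' \times \Real$ in the first case and $S = S' \times \{c\}$ in the second, where $S'$ is an affine coordinate subspace of $\Real^n$, the image of $F \cap S$ under $\pi$ is either the slice $X \cap S'$ or the level-set intersection $\{\x \in X \cap S' \mid f(\x) = c\}$. The first case is handled by Proposition~\ref{prop:proj}, which gives that $X \cap S'$ is semi-monotone, hence connected. The second case, which is the main one, reduces via iterated application of Remark~\ref{re:monotone_funct} (so that $f|_{X \cap S'}$ is again a monotone function on a semi-monotone set, whenever $X \cap S'$ is non-empty) to showing that the full level set $\{\x \in X \mid f(\x) = c\}$ is connected for every monotone $f$.

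The core step is therefore the connectedness of a single level set $\{f = c\}$ of a monotone function. If $f$ is constant on $X$ this set is $X$ or $\emptyset$, and we are done. Otherwise $f$ is strictly monotone in some coordinate, which I may take to be $x_n$, and I invoke Lemma~\ref{le:Usemi-monotone}: the graph $F$ is simultaneously the graph of a continuous function expressing $x_n$ in terms of $(x_1,\ldots,x_{n-1},y)$ over a semi-monotone set $U \subset {\rm span}\{x_1,\ldots,x_{n-1},y\}$. Under this re-parameterization, the level set $\{f = c\}$ corresponds homeomorphically to the affine coordinate slice $U \cap \{y = c\}$, which is semi-monotone by Proposition~\ref{prop:proj} and so connected. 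I expect the level-set connectedness to be the only nontrivial point in the whole argument; Lemma~\ref{le:Usemi-monotone} is precisely the tool that lets us exchange the coordinate $x_n$ with the value coordinate $y$ and so turn a level set of $f$ into a coordinate-hyperplane slice of an auxiliary semi-monotone set.
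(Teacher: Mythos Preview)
Your proof is correct and follows essentially the same approach as the paper. Both arguments hinge on the same two tools: Lemma~\ref{le:connected} to pass from connectedness of equality-slices to connectedness of inequality-slices (your (iii)$\Rightarrow$(ii)), and Lemma~\ref{le:Usemi-monotone} to exchange $x_n$ with $y$ so that a level set $\{f=c\}$ becomes a coordinate-hyperplane slice of a semi-monotone set $U$. The only difference is organizational: the paper proves (i)$\Leftrightarrow$(ii) directly and then (ii)$\Leftrightarrow$(iii), whereas you run the cycle (iii)$\Rightarrow$(ii)$\Rightarrow$(i)$\Rightarrow$(iii); your route is marginally more economical since (i)$\Rightarrow$(iii) only requires the $\{y=c\}$ case, while the paper's (i)$\Rightarrow$(ii) also treats $\{y<c\}$ and $\{y>c\}$ separately before establishing (iii)$\Rightarrow$(ii) anyway.
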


\begin{proof}
We first prove that (i) is equivalent to (ii).

Let $f$ be monotone (in particular, $X$ is semi-monotone), and let $C$ be a coordinate cone in
$\Real^{n+1}$.
It is sufficient to consider the cases when $C$ is defined by a sign condition on the variable $y$,
otherwise, by Proposition~\ref{prop:proj}, the situation is reduced to $f$ defined on a
smaller semi-monotone set,
the intersection of $X$ with a coordinate cone in ${\rm span} \{ x_1, \ldots ,x_n \}$.
If $C= \{ y <c \}$ for some $c \in \Real$, then, since $f$ is submonotone, the
projection of $C \cap F$ to ${\rm span} \{ x_1, \ldots ,x_n \}$ is semi-monotone,
hence connected.
Since $f$ is continuous, the pre-image of this projection in $F$ is connected.
Similar argument applies in the case when $C= \{ y >c \}$.

Suppose that $C= \{ y =c \}$.
Due to Lemma~\ref{le:Usemi-monotone}, the intersection $U \cap \{ y=c \}$ is semi-monotone,
hence connected, and since the function $g$ is continuous, the pre-image of $U \cap \{ y=c \}$
in $F$ is connected.

Conversely, let for each coordinate cone $C$ in $\Real^{n+1}$ the intersection
$C \cap F$ be connected.
Let $C'$ be a coordinate cone in ${\rm span} \{ x_1, \ldots ,x_n \}$.
Then the intersection $F \cap (C' \times \Real)$ is connected, hence the image of its projection,
$C' \cap X$, is connected.
It follows that $X$ is semi-monotone.
We need to prove that $f$ is both sub- and supermonotone.
Let $c \in \Real$.
Then the set $C' \cap \{ f <c \}$ is the image under the projection to $\Real^n$ of
the connected set $C \cap F$ where $C:=(C' \times \Real) \cap \{ y < c \}$.
Since $f$ is continuous, $C' \cap \{ f <c \}$ is connected, hence $f$ is submonotone.
The similar arguments show that $f$ is supermonotone.

Now we prove that (ii) is equivalent to (iii).

If (ii) is satisfied, then for each $S$ the intersection $S \cap F$ is connected, since
$S$ ia a particular case of the coordinate cone.
The converse follows from Lemma~\ref{le:connected} by a straightforward induction on the
number of strict inequalities defining the coordinate cone.
\end{proof}

\begin{corollary}\label{cor:def_monotone}
Under the conditions of Lemma~\ref{le:def_monotone} and assuming $X$ connected,
the non-constant function $f$ is monotone if and only if
\begin{itemize}
\item[(i)]
for every $x_j$ and every $c \in \Real$ the intersection $F \cap \{ x_j=c \}$
is either empty, or the graph of a monotone function in $n-1$ variables, and
\item[(ii)]
for every $b \in \Real$ the intersection $F \cap \{ y=b \}$ is either empty,
or the graph of a monotone function in $n-1$ variables.
\end{itemize}
\end{corollary}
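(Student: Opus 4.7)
The plan is to prove both implications via the criterion of Lemma~\ref{le:def_monotone}: a function $f$ satisfying its hypotheses is monotone if and only if $S \cap F$ is connected for every affine coordinate subspace $S \subset \Real^{n+1}$.

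For the forward direction (monotone implies (i) and (ii)), part (i) is essentially immediate from Remark~\ref{re:monotone_funct}: the restriction of $f$ to a non-empty slice $X \cap \{x_j = c\}$ is monotone in $n-1$ variables, and its graph is exactly $F \cap \{x_j = c\}$. For (ii), since $f$ is non-constant, pick a coordinate, say $x_n$, in which $f$ is strictly monotone, and invoke Lemma~\ref{le:Usemi-monotone} to rewrite $F$ as the graph of a continuous function $g$ on a semi-monotone set $U$, with $x_n = g(x_1,\ldots,x_{n-1},y)$. I would then show $g$ is itself monotone in two steps: a direct monotonicity-inversion argument showing $g$ is strictly increasing, strictly decreasing, or independent in each of its variables (one inverts the strict monotonicity of $f$ in $x_n$ and reads off the sign of the dependence on each other variable); and the observation that the graphs of $g$ and $f$ coincide as subsets of $\Real^{n+1}$, so the coordinate-cone connectedness criterion Lemma~\ref{le:def_monotone}(ii) for $g$ is the same condition already satisfied by $f$. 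Applying Remark~\ref{re:monotone_funct} to $g$ on the slice $U \cap \{y = b\}$ then identifies $F \cap \{y = b\}$ as either empty or the graph of a monotone function in $n-1$ variables, giving (ii).

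For the backward direction, assume (i), (ii), and that $X$ is connected, and verify condition (iii) of Lemma~\ref{le:def_monotone}. Given an affine coordinate subspace $S \subset \Real^{n+1}$, split into three cases based on its defining equations. If $S = \Real^{n+1}$, then $S \cap F = F$ is homeomorphic to $X$ and hence connected. If $S$ is defined solely by $y = b$, then (ii) gives that $F \cap \{y = b\}$ is either empty or the graph of a monotone function in $n-1$ variables; both are connected, since any non-empty semi-monotone set is connected. If $S$ involves at least one equation $x_j = c$, then (i) identifies $F \cap \{x_j = c\}$ as either empty or the graph of a monotone function $\tilde f$ in $n-1$ variables, and the remaining equations of $S$ cut out an affine coordinate subspace in the ambient space of this graph, so Lemma~\ref{le:def_monotone}(iii) applied to $\tilde f$ yields the connectedness of $S \cap F$.

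The main obstacle is the claim, used in the forward direction, that the function $g$ furnished by Lemma~\ref{le:Usemi-monotone} is itself monotone. Transferring the per-coordinate strict-monotonicity/independence behaviour from $f$ to $g$ must be done by a direct monotonicity-inversion argument rather than by smooth implicit function reasoning, since no differentiability is available; once this is done, the coordinate-cone criterion for $g$ follows immediately from its validity for $f$ because their graphs coincide, and the rest of the proof is a routine reduction to $n-1$ variables.
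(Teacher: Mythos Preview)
Your argument is correct and follows the same route as the paper, which records only that the statement ``easily follows from Lemma~\ref{le:def_monotone} by induction on $n$.'' The one place you add genuine content is the forward direction of (ii): you verify directly that the coordinate-exchanged function $g$ from Lemma~\ref{le:Usemi-monotone} is itself monotone, and this is exactly what the paper isolates as Lemma~\ref{le:exchange} immediately after the corollary, so your inline treatment simply absorbs that step.
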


\begin{proof}
The statement easily follows from Lemma~\ref{le:def_monotone} by induction on $n$.
\end{proof}

\begin{remark}
In Lemma~\ref{le:function_graph} we will prove that the requirement (ii) alone in
Corollary~\ref{cor:def_monotone} is a necessary and sufficient for $f$ to be monotone.
In Theorem~\ref{th:monot_funct} we will show that {\em fixing} any $j$ in the part (i) of
Corollary~\ref{cor:def_monotone}, and adding the requirement for $\inf_{x_j} f$ and $\sup_{x_j} f$
to be sub- and supermonotone functions respectively, makes (i) also a necessary and sufficient
condition for $f$ to be monotone.
\end{remark}

\begin{corollary}
Let $f:\> X \to \Real$ be a monotone function having the graph $F \subset \Real^{n+1}$.
Then for every $z \in \{ x_1, \ldots ,x_n, y \}$
and every $c \in \Real$ each of the intersections $F \cap \{ z < c \}$ and
$F \cap \{ z > c \}$ is either empty or the graph of a monotone function.
\end{corollary}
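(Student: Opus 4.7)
My plan is to apply Lemma~\ref{le:def_monotone} to a suitable restriction of $f$. Fix $z \in \{x_1, \ldots, x_n, y\}$ and $c \in \Real$, set $F' := F \cap \{z < c\}$, and assume $F' \neq \emptyset$ (the case $F' = \emptyset$ is trivial, and $\{z > c\}$ is symmetric). Let $X' \subset \Real^n$ be the projection of $F'$ to ${\rm span}\{x_1, \ldots, x_n\}$: this is $X \cap \{x_j < c\}$ when $z = x_j$, and $\{\x \in X : f(\x) < c\}$ when $z = y$. In either case $X'$ is open (the second using continuity of $f$), bounded, non-empty, and $F'$ is precisely the graph of $f|_{X'}$. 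The restriction $f|_{X'}$ inherits from $f$ continuity, boundedness, and the property of being strictly increasing in, strictly decreasing in, or independent of each $x_j$, so it satisfies the structural hypotheses of Lemma~\ref{le:def_monotone}; monotonicity then reduces to verifying condition (iii) of that lemma, namely that $S \cap F'$ is connected for every affine coordinate subspace $S \subset \Real^{n+1}$.

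I would split the verification according to whether $S$ imposes a condition on the variable $z$. If $S$ contains $z = c'$ with $c' < c$, then $S \cap \{z < c\} = S$ and $S \cap F' = S \cap F$, which is connected by Lemma~\ref{le:def_monotone}(iii) applied to $f$; if $c' \geq c$ the intersection is empty. The main obstacle is the remaining case, where $S$ imposes no $z$-condition: then $S \cap \{z < c\}$ is only a half-space within $S$ rather than an affine coordinate subspace, so Lemma~\ref{le:def_monotone}(iii) cannot be invoked directly.

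To dispose of this case my plan is to call on Lemma~\ref{le:connected}, applied to the set $S \cap F$ along the coordinate $z$. This set is connected by Lemma~\ref{le:def_monotone}(iii) for $f$, and for every $b \in \Real$ the slice $S \cap F \cap \{z = b\} = (S \cap \{z = b\}) \cap F$ is connected: since $S$ contains no $z$-condition, $S \cap \{z = b\}$ is again an affine coordinate subspace, and thus its intersection with $F$ is connected by Lemma~\ref{le:def_monotone}(iii), bearing in mind the paper's convention that the empty set is connected. Lemma~\ref{le:connected} then gives that $S \cap F \cap \{z < c\} = S \cap F'$ is connected, completing the verification of condition (iii) for $f|_{X'}$; hence $f|_{X'}$ is monotone and $F'$ is its graph.
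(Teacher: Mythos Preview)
Your proof is correct and follows the same overall strategy as the paper: verify the structural hypotheses of Lemma~\ref{le:def_monotone} for the restriction $f|_{X'}$ and then establish one of the equivalent connectivity conditions. The only difference is in which condition you check. The paper observes in one line that for any affine coordinate subspace $S$, the set $\{z<c\}\cap S$ is itself a \emph{coordinate cone} in $\Real^{n+1}$, so $F\cap\{z<c\}\cap S$ is connected directly by condition~(ii) of Lemma~\ref{le:def_monotone} applied to the original $f$; condition~(iii) for the restriction then follows immediately with no case split. You instead work only through condition~(iii) for the original $f$, which forces the case analysis on whether $S$ constrains $z$, and in the main case you re-derive the passage from slices to half-spaces via Lemma~\ref{le:connected}---effectively reproving the implication (iii)$\Rightarrow$(ii) locally. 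Both routes are sound; the paper's is shorter because it exploits the coordinate-cone characterization rather than the affine-subspace one.
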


\begin{proof}
According to part (ii) of Lemma~\ref{le:def_monotone}, for any affine coordinate subspace
$S \subset \Real^{n+1}$, the intersection $F \cap \{ z < c \} \cap S$ is connected, since
$\{ z < c \} \cap S$ is a coordinate cone.
Then part (iii) of this lemma implies that $F \cap \{ z < c \}$ is either empty or the graph
of a monotone map.
The case of $F \cap \{ z > c \}$ completely analogous.
\end{proof}

\begin{lemma}\label{le:exchange}
In the conditions of Lemma~\ref{le:Usemi-monotone}, the function $g(x_1, \ldots ,x_{n-1},y)$
is monotone.
\end{lemma}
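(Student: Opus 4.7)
The plan is to apply Lemma~\ref{le:def_monotone} to $g$ on the semi-monotone domain $U$, using the fact that the graph of $g$, viewed as a subset of $\Real^{n+1}$, coincides with $F$ (only the order of coordinates differs, which does not affect the notion of a coordinate cone). By Lemma~\ref{le:def_monotone}(i)$\Rightarrow$(ii) applied to the monotone function $f$, the intersection $F\cap C$ is connected for every coordinate cone $C\subset\Real^{n+1}$; hence, once the remaining hypotheses of Lemma~\ref{le:def_monotone} are verified for $g$, the reverse implication (ii)$\Rightarrow$(i) will yield monotonicity of $g$. By Lemma~\ref{le:Usemi-monotone}, $U$ is open, bounded and non-empty, and $g$ is continuous; moreover $g=x_n$ is bounded since $X$ is bounded. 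The substantive task is to verify that $g$ is strictly increasing in, strictly decreasing in, or independent of each of its coordinates $x_1,\ldots,x_{n-1},y$.

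For the coordinate $y$, non-constancy of $f$ in $x_n$ means that, with the remaining coordinates fixed, $y=f$ is a strictly monotone function of $x_n$; inverting gives $g$ strictly monotone in $y$, with the same direction as $f$ in $x_n$. For a coordinate $x_j$ with $j<n$, I would fix the other coordinates $(a_1,\ldots,\widehat{a_j},\ldots,a_{n-1},b)$ and work in the section $X_{jn}:=X\cap\{x_i=a_i\ \text{for}\ i\ne j,n\}$, a semi-monotone subset of $\Real^2$ (with coordinates $x_j,x_n$) on which the restriction $\tilde f$ of $f$ is a monotone function by Remark~\ref{re:monotone_funct}. The level set $L:=\{\tilde f=b\}\subset X_{jn}$ is connected by Lemma~\ref{le:def_monotone}(iii) applied to $\tilde f$, and by strict monotonicity of $\tilde f$ in $x_n$, $L$ is the graph of a continuous function $h(x_j)$ on an open interval, satisfying $g(a_1,\ldots,x_j,\ldots,a_{n-1},b)=h(x_j)$. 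If $f$ is independent of $x_j$, then $\tilde f$ depends only on $x_n$, so $\{\tilde f=b\}$ is a single horizontal slice of $X_{jn}$, $h$ is constant, and $g$ is independent of $x_j$. Otherwise, an equality $h(x_j^1)=h(x_j^2)$ with $x_j^1\ne x_j^2$ would produce two points of $X$ differing only in $x_j$ but with the same value of $f$, contradicting strict monotonicity of $f$ in $x_j$; so $h$ is injective, continuous on an interval, hence strictly monotone. The direction is pinned down by the intermediate-point argument applied to $(x_j^2,h(x_j^1))$, which lies in the open set $X_{jn}$ when $x_j^2$ is sufficiently close to $x_j^1$: for example, if $f$ is strictly increasing in both $x_j$ and $x_n$, then $f$ at the intermediate point strictly exceeds $b$, forcing $h(x_j^1)>h(x_j^2)$, so $g$ is strictly decreasing in $x_j$. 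The remaining sign combinations are analogous, and the direction is the same on every fiber of $U$, since it is dictated by the globally fixed directions of $f$ in $x_j$ and in $x_n$.

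With the trichotomy in hand, Lemma~\ref{le:def_monotone}(ii)$\Rightarrow$(i) applied to $g$ concludes the proof. The main obstacle is the trichotomy verification for the coordinates $x_j$ with $j<n$, specifically handling the ``independent of $x_j$'' case and the local intermediate-point argument that fixes the direction of monotonicity; everything else is transferred from $f$ to $g$ via the identification of their graphs.
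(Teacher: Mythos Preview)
Your proof is correct and follows the same route as the paper: both arguments observe that $f$ and $g$ share the graph $F$ and then invoke Lemma~\ref{le:def_monotone}. The paper's two-line proof simply says ``$g$ has the same graph as the monotone function $f$, and hence, by Lemma~\ref{le:def_monotone}, is itself monotone,'' leaving the verification that $g$ satisfies the trichotomy hypothesis of Lemma~\ref{le:def_monotone} (strictly increasing / strictly decreasing / independent in each variable) entirely implicit. You spell this out: the $y$-variable case by inverting the strict monotonicity of $f$ in $x_n$, and the $x_j$ case ($j<n$) by restricting to the two-dimensional slice $X_{jn}$ and analyzing the level curve of $\tilde f$. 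This makes your argument more complete than the paper's, at the cost of length; nothing in your verification is superfluous, since Lemma~\ref{le:def_monotone} genuinely requires that hypothesis before its equivalences can be invoked.
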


\begin{proof}
The function $g$ is defined on the semi-monotone set $U$.
It has the same graph as the monotone function $f$, and hence, by Lemma~\ref{le:def_monotone},
is itself monotone.
\end{proof}

\begin{remark}\label{re:g}
The function $g$ was constructed from $f$ with respect to the variable $x_n$.
An analogous function $g_j(x_1, \ldots, x_{j-1},y,x_{j+1}, \dots, x_n)$ can be constructed from
$f$ with respect to any variable $x_j$ in which $f$ is non-constant, and
Lemma~\ref{le:exchange} implies that $g_j$ is monotone.
If $g_j$ is non-constant in a variable $x_\ell$,
then the function constructed from $g_j$ with respect to $x_\ell$, coincides with
the function
$$g_\ell(x_1, \ldots ,x_{\ell -1},y,x_{\ell+1}, \ldots ,x_n).$$
The function constructed from $g_j$ with respect to $y$ coincides with $f$.
All these functions have the same graph $F$ as $f$.
\end{remark}

\begin{lemma}\label{le:components}
Let $X$ be an open, simply connected subset in $\Real^n$, and let
$\Sigma \subset X$ be a non-empty connected $(n-1)$-dimensional manifold closed in $X$.
Then $X \setminus \Sigma$ has two connected components.
\end{lemma}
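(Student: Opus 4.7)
The plan is to combine the local two-sidedness of $\Sigma$ in $X$, which follows from $\dim \Sigma = \dim X - 1$, with a mod-$2$ intersection argument driven by the vanishing of $\pi_1(X)$.

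First, using a definable triangulation of $X$ compatible with $\Sigma$ (available from the o-minimal structure, and refined using the PL propositions in the appendix), I note that since $\Sigma$ is an $(n-1)$-manifold inside the $n$-manifold $X$, every $p \in \Sigma$ has an open neighborhood $V \subset X$ with $V \setminus \Sigma$ having exactly two connected components. Equivalently, $\Sigma$ carries local product neighborhoods of the form $W \times (-1, 1)$ in $X$, with $W \subset \Sigma$ open.

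Next, I would define a ``side'' function $s : X \setminus \Sigma \to \Z/2$ by fixing a base point $x_0 \in X \setminus \Sigma$ and letting $s(x)$ be the parity of $\#(\gamma \cap \Sigma)$ for any definable PL path $\gamma$ from $x_0$ to $x$ in transverse position with $\Sigma$. Independence from $\gamma$ is exactly where simple connectivity of $X$ enters: any two such paths cobound a PL disk in $X$, which in general position with respect to $\Sigma$ has preimage a compact PL $1$-manifold whose boundary consists of the crossings on the two paths; by parity of boundary points of a compact $1$-manifold, the two paths give the same value of $s$. The function $s$ is locally constant on $X \setminus \Sigma$, and local two-sidedness shows both values are attained near every point of $\Sigma$, so $X \setminus \Sigma$ has at least two connected components, distinguished by $s$.

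To finish, I would show each level set $s^{-1}(0)$ and $s^{-1}(1)$ is connected. Given two points in the same level set, join them by a transverse PL path in $X$; by construction it meets $\Sigma$ in an even number of points. One then cancels the crossings in pairs: using the connectedness of $\Sigma$, slide two consecutive crossings along $\Sigma$ into a single local product chart $W \times (-1, 1)$, and replace the intervening arc by one lying entirely on one side of $\Sigma$. The main obstacle is this cancellation step: it simultaneously requires the connectedness of $\Sigma$ (to slide crossings together) and a global coherence of the ``sides'' of $\Sigma$, i.e., that the $\Z/2$-valued holonomy on loops of $\Sigma$ measuring side-reversal is trivial. The latter again follows from simple connectivity of $X$, because every loop in $\Sigma$ is null-homotopic in $X$ and hence has trivial mod-$2$ intersection with $\Sigma$. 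All of this is carried out in the definable/PL category furnished by the o-minimal structure.
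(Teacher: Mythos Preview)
Your argument is correct and gives a hands-on geometric proof where the paper's main argument is purely homological. The paper dispatches the lemma in one line via the exact sequence
\[
0=H_1(X)\longrightarrow H_0(\Sigma)\longrightarrow H_0(X\setminus\Sigma)\longrightarrow H_0(X)\longrightarrow 0,
\]
obtained from the long exact sequence of the pair $(X,X\setminus\Sigma)$ together with the Thom isomorphism $H_*(X,X\setminus\Sigma)\cong H_{*-1}(\Sigma)$; the rank computation is then immediate. The paper also records, in a remark, an alternative closer in spirit to yours: it first deduces that $\Sigma$ is orientable from $\pi_1(X)=0$, then uses the \emph{oriented} intersection index to show $X\setminus\Sigma$ is disconnected, and finally uses the orientation to connect any two points on the same side by pushing off a path in $\Sigma$. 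Your mod-$2$ side function $s$ streamlines this: once $s$ is well-defined (your disk argument), two-sidedness of $\Sigma$ is automatic, and the cancellation of an adjacent pair of crossings goes through by pushing off a path $\alpha\subset\Sigma$ joining them (the normal line bundle over a path is trivial regardless of global holonomy, and the push-off, being connected in $X\setminus\Sigma$, stays in a single $s$-level). In particular your separate holonomy discussion is not strictly needed---it is subsumed by the well-definedness of $s$. The trade-off is clear: the exact-sequence proof is shorter and avoids general-position technicalities, while your approach is elementary and self-contained in the definable/PL category.
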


\begin{proof}
There is a short exact sequence (a combination of
a cohomological exact sequence of the pair $(X, \Sigma)$ and the Poincare duality)
$$0=H_1(X)\to H_0(\Sigma)\to H_0(X\setminus\Sigma)\to H_0(X)\to 0$$
which, given $H_0(\Sigma)=H_0(X)= \Z$, implies that ${\rm rank} (H_0(X\setminus\Sigma))=2$.
\end{proof}

\begin{remark}
Here is an alternative proof of Lemma~\ref{le:components}, not using an exact sequence.

If $\Sigma$ is not orientable, choose a normal at a point $x \in \Sigma$
and find a path in $\Sigma$ that changes its orientation.
Lift a path in the direction of the normal and connect its ends.
The result is a loop in $X$ intersecting $\Sigma$ transversally at $x$.
This loop cannot be contractible in $X$ since its intersection index with $\Sigma$ is $\pm 1$.
It follows that $\Sigma$ is orientable.

If $X \setminus \Sigma$ is connected, take a segment transversal to $\Sigma$ and connect
its ends in $X\setminus\Sigma$.
We get a loop in $X$ which intersection index with $\Sigma$ is $\pm 1$.
Thus, $X \setminus \Sigma$ is not connected.

Assume $\Sigma$ is oriented.
Every point $x \in X \setminus \Sigma$ can be connected in $X \setminus \Sigma$ to a point
$v\in \Sigma$ by a path $\gamma$ such that $\gamma \setminus \{v\} \subset X \setminus \Sigma$.
If the path $\gamma'$ for a point $x'$ gets to $\Sigma$ at the point $v'$ from the same side
of $\Sigma$ as $x$, connect $v$ and $v'$ by a path $\rho$ in $\Sigma$, then
lift $\rho$ along the normals to $\Sigma$.
We get a path connecting $x$ and $x'$ in $X \setminus \Sigma$.
It follows that $X \setminus \Sigma$ has exactly two connected components.
\end{remark}

\begin{lemma}\label{le:divide}
Let $X$ be a semi-monotone set in $\Real^n$ and $\Sigma \subset X$ a graph of a monotone
function $x_n=h_n(x_1,\dots,x_{n-1})$ on some semi-monotone $Y \subset \Real^{n-1}$,
such that $\partial \Sigma \subset \partial X$.
Then $X \setminus \Sigma$ is a union of two semi-monotone sets.
\end{lemma}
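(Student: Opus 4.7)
The plan is to apply Lemma~\ref{le:components} to exhibit two connected components $X^+,X^-$ of $X\setminus\Sigma$ and then verify that each is semi-monotone via Theorem~\ref{th:def2}. For Lemma~\ref{le:components}: $X$, being a non-empty semi-monotone set, is a topologically regular cell by \cite{BGV}, and therefore open and simply connected; $\Sigma$ is a connected $(n-1)$-dimensional manifold closed in $X$ because the projection $(\mathbf{x}',h_n(\mathbf{x}'))\mapsto\mathbf{x}'$ identifies it homeomorphically with the open, connected semi-monotone set $Y$, and the hypothesis $\partial\Sigma\subset\partial X$ means precisely $\overline\Sigma\cap X=\Sigma$. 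So Lemma~\ref{le:components} produces exactly two non-empty open connected components $X^+,X^-$ of $X\setminus\Sigma$, both clearly bounded.

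To show each of $X^+,X^-$ is semi-monotone we invoke Theorem~\ref{th:def2} and verify that $X^\pm\cap S$ is connected for every affine coordinate subspace $S\subset\Real^n$, by induction on $n$. The base case $n=1$ is immediate, since then $\Sigma$ is a single interior point of the open interval $X$. For the inductive step with a given $S$, if $\Sigma\cap S=\emptyset$ the disjoint open sets $X^+\cap S$ and $X^-\cap S$ cover the connected set $X\cap S$ (semi-monotone by Proposition~\ref{prop:proj}), forcing one to be empty and the other to equal $X\cap S$, both connected.

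The substantive case is $\Sigma\cap S\neq\emptyset$. Here the key observation is that $\Sigma\cap S$ is itself the graph of a monotone function on a semi-monotone subset of $S$, with its boundary contained in $\partial(X\cap S)$: if $S$ leaves $x_n$ free, this follows from Remark~\ref{re:monotone_funct} applied to the restriction of $h_n$; if $S\subset\{x_n=c\}$, one picks a coordinate $x_j$ ($j<n$) in which $h_n$ is non-constant and uses Lemma~\ref{le:Usemi-monotone} and Lemma~\ref{le:exchange} to rewrite $\Sigma$ as the graph of a monotone function $g_j$ in $x_j$, so that $\Sigma\cap S$ becomes the graph of $g_j(\,\cdot\,,c)$ on a semi-monotone domain inside $S$. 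Applying the induction hypothesis inside $S$, $(X\cap S)\setminus(\Sigma\cap S)$ is a disjoint union of two semi-monotone connected components $A$ and $B$. The disjoint open sets $X^+\cap S$ and $X^-\cap S$ cover $A\cup B$ and are unions of components, so provided both are non-empty they coincide with $A$ and $B$ in some order, finishing the induction.

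The main obstacle is ensuring that both $X^+\cap S$ and $X^-\cap S$ are non-empty whenever $\Sigma\cap S\neq\emptyset$, i.e., that $\Sigma$ really separates $X\cap S$ locally. The argument is point-wise: at any $p\in\Sigma\cap S$ we exhibit a coordinate direction contained in $S$ that is transverse to $\Sigma$ at $p$, so that moving $\pm\varepsilon$ along it from $p$ produces nearby points of $X^+$ and $X^-$ respectively. If $x_n$ is free in $S$, the $x_n$-direction works because $\Sigma$ is a graph in $x_n$. Otherwise one uses an $x_j$ ($j<n$) appearing freely in $S$ in which $h_n$ is non-constant, via the alternative graph representation from Lemma~\ref{le:exchange}. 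The residual degenerate case in which no coordinate free in $S$ is a direction of non-constancy of $h_n$ has to be handled separately; arguing as in the $h_n\equiv v$ analysis (where the boundary condition $\partial\Sigma\subset\partial X$ forces $Y$ to coincide with the projection of $X\cap\{x_n=v\}$), this forces $\Sigma\cap S=X\cap S$, so $(X\cap S)\setminus(\Sigma\cap S)=\emptyset$ and the assertion is vacuous.
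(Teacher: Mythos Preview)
Your proposal is correct and follows essentially the same strategy as the paper: invoke Lemma~\ref{le:components} to obtain two connected components, then verify each is semi-monotone by induction on $n$ through a slice-by-slice analysis, using the exchange of variables (Lemma~\ref{le:exchange}/Remark~\ref{re:g}) to represent $\Sigma$ as a graph in a convenient direction and the transversality of that direction to show both pieces meet the slice.

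The one organizational difference is that the paper works only with codimension-one slices $\{x_j=c\}$ and appeals to Corollary~\ref{cor:def2} (showing $X_\pm\cap\{x_j=c\}$ is \emph{semi-monotone}), whereas you work with arbitrary affine coordinate subspaces $S$ and appeal to Theorem~\ref{th:def2} (showing $X^\pm\cap S$ is \emph{connected}). Your choice forces you to treat a slightly broader degenerate case --- $S$ fixes $x_n$ and $h_n$ is constant in every coordinate left free in $S$, even though $h_n$ need not be globally constant --- but your argument that the boundary hypothesis $\partial\Sigma\subset\partial X$ then forces $\Sigma\cap S=X\cap S$ is correct. The paper avoids this case entirely because with a codimension-one slice either $x_n$ is free, or the slice is $\{x_n=c\}$ and one can always pick some $x_j$ in which $h_n$ is non-constant (the globally constant case having been disposed of at the outset); the remaining situation $h_n$ independent of $x_j$ with $j<n$ is handled by a short cylinder argument. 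Either route works; the paper's is marginally tidier, yours is marginally more uniform.
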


\begin{proof}
First notice that, by Lemma~\ref{le:components}, $X \setminus \Sigma$ has two connected
components, $X_+$ and $X_-$.
For any variable $x_j,\> j=1, \ldots ,n$, and any $c \in \Real$ the intersection
$X \cap \{ x_j =c \}$ is semi-monotone due to Corollary~\ref{cor:def2}, while $\Sigma \cap \{ x_j =c \}$
is either empty or the graph of a monotone function due to Corollary~\ref{cor:def_monotone}.

The rest of the proof is by induction on $n$, the base for $n=1$ being trivial.
If $h_n$ is constant in each variable $x_1, \ldots ,x_{n-1}$ then the statement of the theorem
is trivially true.
Let $X \cap  \{ x_j=c \}$ be non-empty for some variable $x_j,\> j=1, \ldots ,n$, and some $c \in \Real$.
Note that if $h_n$ is non-constant in $x_j$, where $j<n$, then according to Remark~\ref{re:g},
$\Sigma$ is the graph of a monotone function
$x_j=h_j(x_1, \ldots ,x_{j-1},x_{j+1}, \ldots ,x_n)$ on some semi-monotone set $Y_j$.
Now let $j=1, \ldots, n$.
If $\Sigma \cap \{ x_j=c \}= \emptyset$, then either $X_+ \cap  \{ x_j=c \}= X \cap  \{ x_j=c \}$
or $X_- \cap  \{ x_j=c \}= X \cap  \{ x_j=c \}$, in any case the intersection is semi-monotone.
Assume now that $\Sigma \cap \{ x_j=c \} \neq \emptyset$.
Observe that $\Sigma \not\subset \{ x_j=c \}$, since $h_n$ is not a constant function,
and $\Sigma$ is the graph of $h_n$.
Hence, $(X_+ \cup X_-) \cap \{ x_j =c \} \neq \emptyset$.

Both intersections, $X_+ \cap \{ x_j=c \}$ and $X_- \cap \{ x_j=c \}$ are non-empty.
Indeed, if $j<n$ and $\{x_j=c\}$ contains a point $p \in \Sigma$, it also includes an open interval
of a straight line parallel to $x_n$-axis containing $p$.
The two parts into which $p$ divides that interval belong one to $X_+$ and another to $X_-$.
If $j=n$ then the restriction of the function $h_n$ to a straight line parallel to $x_i$-axis, such
that $h_n$ is non-constant in $x_i$, has the graph which is a subset of $\Sigma$ and has
non-empty intersections with both $\{ x_n <0 \}$ and $\{ x_n >0 \}$.

By the inductive hypothesis, $(X \cap \{ x_j =c \}) \setminus (\Sigma \cap \{ x_j =c \})$ is a union
of two semi-monotone sets.
It follows that one of the connected components of
$(X \cap \{ x_j =c \}) \setminus (\Sigma \cap \{ x_j =c \})$
lies in $X_+$ while another in $X_-$.
Hence, the intersection of $\{ x_j=c \}$ with each of $X_+$ and $X_-$ is semi-monotone.

Finally, in the case when $h_n$ is independent of $x_j,\> j<n$,
the set $\Sigma$ is a cylinder over the graph $\Sigma \cap \{ x_j=c \}$ of a monotone function,
for any $c \in \Real$, and therefore one of the connected components of
$(X \cap \{ x_j =c \}) \setminus (\Sigma \cap \{ x_j =c \})$ lies in $X_+$ while another $X_-$.
It follows that the intersection of $\{ x_j=c \}$ with each of $X_+$ and $X_-$ is semi-monotone.

Corollary~\ref{cor:def2} now implies that each of $X_+$ and $X_-$ is semi-monotone.
\end{proof}

\begin{lemma}\label{le:function_graph}
Let $f:\> X \to \Real$ be a continuous, bounded, non-constant function defined on a non-empty
semi-monotone set $X$.
The function $f$ is monotone if and only if
\begin{itemize}
\item[(i)]
it is either strictly increasing in, or strictly decreasing
in, or independent of $x_j$, for each $j=1, \ldots ,n$;
\item[(ii)]
for every $b \in \Real$ the set $\{ \x \in X|\> f(\x)=b \}$ is either empty, or a graph of a
monotone function in $n-1$ variables.
\end{itemize}
\end{lemma}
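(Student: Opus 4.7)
The plan is to prove the two directions separately: the forward direction via the coordinate-exchange construction of Remark~\ref{re:g}, and the converse via the Sch\"onflies-type splitting Lemma~\ref{le:divide}. For the forward direction, suppose $f$ is monotone. Condition (i) is immediate from Definition~\ref{def:monotone}. For (ii), non-constancy together with (i) forces $f$ to be strictly monotonic in some coordinate $x_j$. By Remark~\ref{re:g} there is a monotone function $g_j$ on a semi-monotone set $U_j$ (in the coordinates with $x_j$ replaced by $y$) with the same graph $F$ as $f$. For any $b$ with $\{f=b\}\neq\emptyset$, the slice $U_j\cap\{y=b\}$ is semi-monotone by Corollary~\ref{cor:def2}, and by Remark~\ref{re:monotone_funct} the restriction $h := g_j|_{\{y=b\}}$ is a monotone function in $n-1$ variables; its graph in $\Real^n$ is precisely the level set $\{f=b\}$.

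For the converse, assume (i) and (ii) and fix $b\in\Real$; it suffices to show $\{f<b\}$ and $\{f>b\}$ are semi-monotone, for then (i) will yield monotonicity of $f$. If $\Sigma:=\{f=b\}$ is empty, then continuity of $f$ and connectedness of $X$ force one of $\{f<b\},\{f>b\}$ to equal $X$ and the other to be empty, so both are semi-monotone. If $\Sigma$ is non-empty, (ii) presents it as the graph of a monotone function in $n-1$ variables; after relabeling the dependent coordinate as $x_n$ I aim to apply Lemma~\ref{le:divide}, whose hypothesis $\partial\Sigma\subset\partial X$ holds because any $p\in\overline{\Sigma}\setminus\Sigma$ cannot lie in the open set $X$ (else continuity would give $f(p)=b$, so $p\in\Sigma$), while $X$ being open gives $\partial X=\overline{X}\setminus X$. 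Lemma~\ref{le:divide} then splits $X\setminus\Sigma$ as a disjoint union of two semi-monotone sets $X_+, X_-$.

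To match $X_\pm$ with $\{f<b\}$ and $\{f>b\}$, observe that by (i) and non-constancy, $f$ is strictly monotonic in some $x_j$, so along the line through any $p\in\Sigma$ parallel to the $x_j$-axis the intermediate value theorem produces nearby points where $f>b$ and nearby points where $f<b$; hence both strict slices are non-empty, and since $f-b$ is nonzero and continuous on $X\setminus\Sigma$, it has constant sign on each connected component. This forces $\{f<b\}$ and $\{f>b\}$ to coincide with $X_\pm$ in some order, yielding semi-monotonicity. The main obstacle is the application of Lemma~\ref{le:divide}: identifying the correct dependent coordinate in the graph description afforded by (ii), verifying $\partial\Sigma\subset\partial X$, and correctly matching the two semi-monotone components of $X\setminus\Sigma$ with the two strict-inequality slices of $f$, a step that makes essential use of hypothesis (i).
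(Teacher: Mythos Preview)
Your proof is correct and follows essentially the same route as the paper's. For the forward direction the paper simply cites Corollary~\ref{cor:def_monotone} for (ii), whereas you unwind that citation via the coordinate-exchange of Remark~\ref{re:g} and the restriction property of Remark~\ref{re:monotone_funct}; for the converse both arguments apply Lemma~\ref{le:divide} to $\Sigma=\{f=b\}$, check $\partial\Sigma\subset\partial X$ from continuity, and use (i) to identify the two semi-monotone pieces with $\{f<b\}$ and $\{f>b\}$ --- your version merely supplies more detail (the empty-$\Sigma$ case and the explicit matching of components).
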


\begin{proof}
If $f$ is monotone, then (i) follows from the definition of a monotone function, while (ii)
is the statement of Corollary~\ref{cor:def_monotone}.

Conversely, suppose the conditions (i), (ii) are true.

Let $\Sigma:= X \cap \{ f(\x)=b \}$.
Since $\Sigma$ is a level set of a continuous function, $\partial \Sigma \subset \partial X$.
By Lemma~\ref{le:divide}, $X \setminus \Sigma$ is a union of two semi-monotone sets.
The condition (i) implies that one of these sets is $X \cap \{ f(\x)<b \}$ and another
is $X \cap \{ f(\x)>b \}$.
It follows that $f$ is both sub- and supermonotone.
\end{proof}

\begin{theorem}\label{th:monot_funct}
A continuous function $f$, defined on a non-empty open bounded set $X \subset \Real^n$, and
not independent of $x_n$, is monotone if and only if it satisfies
the following properties:
\begin{itemize}
\item[(i)]
$f$ is either strictly increasing in or strictly decreasing in or independent of each of the
variables $x_j$, where $j=1, \ldots ,n$;
\item[(ii)]
$\inf_{x_n} f$ and $\sup_{x_n} f$ are sub- and supermonotone
functions, respectively, in variables $x_1, \ldots ,x_{n-1}$;
\item[(iii)]
the intersection of $X$ with any straight line parallel to the  $x_n$-axis is either empty or an
open interval;
\item[(iv)]
the restriction of $f$ to each non-empty set $X \cap \{x_n= a\}$, where $a \in \Real$,
is a monotone function.
\end{itemize}
\end{theorem}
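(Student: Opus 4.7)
The plan is to establish the equivalence by handling the forward direction with results already available in the paper and the backward direction by verifying the criterion of Lemma~\ref{le:def_monotone}(iii) for the graph $F$ of $f$. For the forward direction, if $f$ is monotone then (i) is part of Definition~\ref{def:monotone}, (iv) is Remark~\ref{re:monotone_funct}, and (ii) follows from Lemma~\ref{le:infsup} on the semi-monotone set $X'$; for (iii), a line parallel to the $x_n$-axis is an affine coordinate subspace, so its intersection with the open semi-monotone set $X$ is connected and therefore either empty or an open interval.

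For the converse, assume (i)--(iv). Using (i) and, if necessary, replacing $x_n$ by $-x_n$, we may assume $f$ is strictly increasing in $x_n$. Let $X' \subset {\rm span}\{x_1, \ldots, x_{n-1}\}$ be the projection of $X$ and set $\alpha := \inf_{x_n} f$, $\beta := \sup_{x_n} f$. By (ii) and Definition~\ref{def:sub-super}, $X'$ is semi-monotone and $\alpha$, $\beta$ are sub- and supermonotone on $X'$ with $\alpha < \beta$ (the latter from (iii) together with strict monotonicity of $f$ in $x_n$). Proposition~\ref{prop:def_semi} then yields that
\[ U := \{(x_1, \ldots, x_{n-1}, y) \mid (x_1, \ldots, x_{n-1}) \in X', \; \alpha(\x') < y < \beta(\x')\} \]
is semi-monotone. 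Using (iii), each fiber of $X$ over $X'$ is an open interval on which $f$ is a strictly monotone continuous bijection onto $(\alpha(\x'), \beta(\x'))$, so the map $\Phi: X \to U$, $(\x', x_n) \mapsto (\x', f(\x', x_n))$, is a homeomorphism.

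It remains to verify that for every affine coordinate subspace $S \subset \Real^{n+1}$ with coordinates $(x_1, \ldots, x_n, y)$, the intersection $S \cap F$ is connected, after which Lemma~\ref{le:def_monotone} will deliver monotonicity of $f$. Write $S$ via constraints $x_j = c_j$ for $j \in J \subset \{1, \ldots, n\}$ together, possibly, with a constraint $y = b$, and split by whether $n \in J$ and whether $y$ is constrained. If $n \notin J$, the projection forgetting $y$ carries $S \cap F$ homeomorphically onto a subset of $X$ whose $\Phi$-image is $U$ intersected with a coordinate cone (and with $\{y = b\}$ when applicable), which is semi-monotone by Proposition~\ref{prop:proj} and hence connected. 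If $n \in J$ but $y$ is unconstrained, the projection gives $(X \cap \{x_n = c_n\}) \cap \bigcap_{j \in J,\, j < n} \{x_j = c_j\}$, which is semi-monotone by (iv) and Proposition~\ref{prop:proj}. Finally, if both $n \in J$ and $y = b$, then $F \cap \{x_n = c_n\}$ is the graph of the monotone function $f|_{X \cap \{x_n = c_n\}}$ of $n-1$ variables (by (iv)), and the further intersection with the affine coordinate subspace $\{y = b\} \cap \bigcap_{j \in J,\, j < n}\{x_j = c_j\}$ of $\Real^n$ is connected by Lemma~\ref{le:def_monotone}(iii) applied to that function.

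The chief subtlety is that one must establish semi-monotonicity of the relevant coordinate slices of $X$ \emph{without} assuming $X$ itself semi-monotone at the outset; this is exactly what the homeomorphism $\Phi$ achieves, by transferring such questions to the already-semi-monotone set $U$, while condition (iv) supplies the missing information for those slices in which $x_n$ is fixed.
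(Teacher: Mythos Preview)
Your argument is correct. The forward direction matches the paper's reasoning (up to cosmetic choices: you invoke Lemma~\ref{le:infsup} directly for (ii), the paper routes through Lemma~\ref{le:Usemi-monotone} and Proposition~\ref{prop:def_semi}; these are equivalent).

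The backward direction is where you diverge. The paper observes that $F$ is also the graph of the continuous function $g(x_1,\ldots,x_{n-1},y)$ on the semi-monotone set $U$, and then applies Lemma~\ref{le:function_graph} to $g$: the level sets $\{g=a\}=F\cap\{x_n=a\}$ are, by (iv), graphs of monotone functions in $n-1$ variables, and property (i) for $f$ transfers to the corresponding property for $g$; hence $g$ is monotone, and so is $f$ by Lemma~\ref{le:def_monotone}. Your route bypasses Lemma~\ref{le:function_graph} altogether: you keep $f$ as the primary object and verify criterion (iii) of Lemma~\ref{le:def_monotone} for $F$ by a case split on whether $x_n$ and/or $y$ appear among the constrained coordinates, using the homeomorphism $\Phi:X\to U$ to reduce the $n\notin J$ cases to slices of the already-semi-monotone $U$, and using (iv) for the $n\in J$ cases. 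Both arguments rest on the same structural fact (semi-monotonicity of $U$), but the paper's packaging via Lemma~\ref{le:function_graph} is shorter and makes the role of (iv) transparent as ``level sets of $g$ are monotone'', whereas your direct verification is a bit more laborious but has the virtue of not relying on Lemma~\ref{le:function_graph} (which itself uses the nontrivial Lemma~\ref{le:divide}). One small point worth stating explicitly: continuity of $\Phi^{-1}$, i.e.\ of $g$, is exactly what is asserted in Lemma~\ref{le:Usemi-monotone}, so you may simply cite that rather than leave it implicit.
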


\begin{proof}
Assume first that $f$ is not independent of $x_n$ and satisfies the properties (i)--(iii).
Let $F$ be the graph of $f$.
Then $F$ can be represented as in Lemma~\ref{le:Usemi-monotone},
$$F= \{ (\x, y)|\> \x \in X,\, x_n=g(x_1, \ldots ,x_{n-1},y) \},$$
with the function $g$ defined on the domain
$$U=\{ (x_1, \ldots ,x_{n-1},y)|\>  (x_1, \ldots ,x_{n-1}) \in X',\>
\inf_{x_n}f < y < \sup_{x_n}f \},$$
where $X'$ is the projection of $X$ to ${\rm span} \{ x_1, \ldots ,x_{n-1} \}$.
Observe that $F$ is also the graph of $g$.
By the property (ii), and by Proposition~\ref{prop:def_semi}, the domain $U$ is semi-monotone.

Now suppose that $f$ satisfies also the property (iv).
Applying Lemma~\ref{le:function_graph} to $g$, we conclude that this function is monotone.
Hence, by Lemma~\ref{le:def_monotone}, $f$ is also monotone.

Conversely, suppose that a function $f:\> X \to \Real$, not independent of $x_n$, is monotone.
Then properties (i) and (iii) follow from the definition of a monotone function.
By Lemma~\ref{le:Usemi-monotone} the set $U$ is semi-monotone, hence, by Proposition~\ref{prop:def_semi},
the property (ii) is satisfied.
Property (iv) follows immediately from Lemma~\ref{le:def_monotone}.
\end{proof}

\section{Monotone maps}\label{sec:monotone_maps}

\begin{definition}\label{def:matroid}
For a non-empty semi-monotone set $X \subset \Real^n$ and $k \ge 1$, let
$${\bf f}=(f_1, \ldots ,f_k):\> X \to \Real^k$$
be a continuous and bounded map.
Let
$$H:= \{ x_{j_1}, \ldots , x_{j_\alpha}, y_{i_1}, \ldots ,y_{i_\beta}\} \subset
\{ x_1, \ldots ,x_n,y_1, \ldots ,y_k \},$$
where $\alpha+ \beta=n$.
The set $H$ is called a {\em basis} if the map
$$(x_{j_1}, \ldots , x_{j_\alpha}, f_{i_1}, \ldots ,f_{i_\beta}):\> X \to \Real^n$$
is injective.
Thus, a {\em system of basis sets} is associated with $\f$.
\end{definition}

\begin{example}\label{ex:linear}
Let ${\bf f}:\>  X \to \Real^k$ be a linear map on a non-empty semi-monotone set
$X \subset {\rm span} \{x_1, \ldots ,x_n\}$,
and $\bf F$ be its graph, which is an open set in $n$-dimensional linear subspace $L$ in
${\rm span} \{ \x_1, \ldots ,x_n,y_1, \ldots ,y_k \}$.
Let ${\bf b}:=\{{\bf b}_1, \ldots ,{\bf b}_n\}$ be a basis of the space $L$.
Then a set $H:= \{ x_{j_1}, \ldots , x_{j_\alpha}, y_{i_1}, \ldots ,y_{i_\beta}\}$, where
$\alpha + \beta=n$, is a basis for the map $\f$ if and only if
the projection of ${\bf b}$ to the space ${\rm span}\> H$ is a basis of ${\rm span}\> H$.
\end{example}

\begin{lemma}\label{le:function_basis}
If $k=1$, then the system of basis sets associated with $\f=(f_1):\> X \to \Real$ consists of
$\{ x_1, \ldots ,x_n \}$, and each set $\{ x_1, \ldots ,x_{j-1},y,x_{j+1}, \ldots ,x_n\}$ such that
the function $f_1$ is either strictly increasing in $x_j$, or strictly decreasing
in $x_j$ (see Definition~\ref{def:strictly}).
\end{lemma}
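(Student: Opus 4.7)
The plan is to enumerate all subsets $H\subset\{x_1,\ldots,x_n,y\}$ with $|H|=n$ and to determine, for each one, whether the associated coordinate map of Definition~\ref{def:matroid} is injective on $X$. Since $k=1$ forces $\beta\in\{0,1\}$, only two shapes are possible: $H_0=\{x_1,\ldots,x_n\}$ (the case $\beta=0$), or $H_j=\{x_1,\ldots,x_{j-1},y,x_{j+1},\ldots,x_n\}$ for some $j\in\{1,\ldots,n\}$ (the case $\beta=1$).

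First I would dispose of $H_0$: the associated map $\x\mapsto(x_1,\ldots,x_n)$ is simply the inclusion $X\hookrightarrow\Real^n$ and is trivially injective, so $H_0$ is always a basis.

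Next, for each $H_j$ the associated map is
$$\pi_j\colon X\to\Real^n,\qquad \x\mapsto(x_1,\ldots,x_{j-1},f_1(\x),x_{j+1},\ldots,x_n),$$
and its injectivity on $X$ is clearly equivalent to asking that, for every fixed tuple $(c_i)_{i\neq j}$, the restriction of $f_1$ to the non-empty intersection of $X$ with the line $\{x_i=c_i,\,i\neq j\}$ be one-to-one. Since $\f=(f_1)$ is a monotone function, the trichotomy of Definition~\ref{def:monotone}(ii) applies coordinate-by-coordinate. If $f_1$ is strictly increasing or strictly decreasing in $x_j$, then by Definition~\ref{def:strictly} it is one-to-one on each such vertical interval, hence $\pi_j$ is injective and $H_j$ is a basis. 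If instead $f_1$ is independent of $x_j$, then $f_1$ is constant on every vertical interval; since $X$ is open and non-empty, some such interval is non-degenerate, so $\pi_j$ identifies its endpoints and fails to be injective, and $H_j$ is not a basis.

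The only mildly delicate point is to make sure the trichotomy is available: Definition~\ref{def:matroid} by itself only demands that $\f$ be continuous and bounded, but the lemma is used in the context of a monotone function in the sense of Definition~\ref{def:monotone}, so that \emph{strictly increasing in}, \emph{strictly decreasing in}, and \emph{independent of} $x_j$ exhaust the possibilities for each coordinate. With that in place, the two cases above match exactly the list of basis sets given in the statement.
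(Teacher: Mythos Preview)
Your argument imports an assumption that the lemma does not make. The lemma sits immediately after Definition~\ref{def:matroid} and concerns an arbitrary continuous bounded map $\f=(f_1)$ on a semi-monotone set $X$; it does \emph{not} assume that $f_1$ is a monotone function in the sense of Definition~\ref{def:monotone}. So the trichotomy (strictly increasing / strictly decreasing / independent of $x_j$) is not available, and the last paragraph of your proposal, where you appeal to it, is precisely where the gap lies. Without the trichotomy, showing ``independent of $x_j$ $\Rightarrow$ $H_j$ not a basis'' does not give the converse you need: a priori $f_1$ could be injective on every vertical interval (so $H_j$ is a basis) while being increasing on some intervals and decreasing on others, hence neither strictly increasing nor strictly decreasing in $x_j$ in the global sense of Definition~\ref{def:strictly}.

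The paper closes this gap with a short connectedness argument. Assuming $H_j$ is a basis, the restriction of $f_1$ to each non-empty line $X\cap\{x_i=c_i,\ i\neq j\}$ is a continuous injection on an interval, hence strictly monotone on that interval. Let $A$ (resp.\ $B$) be the set of parameters $c=(c_i)_{i\neq j}$ for which this restriction is increasing (resp.\ decreasing), and let $X'$ be the projection of $X$ to ${\rm span}\{x_i:\,i\neq j\}$, so $X'=A\cup B$. Continuity of $f_1$ makes both $A$ and $B$ open in $X'$, and since $X'$ is semi-monotone (Proposition~\ref{prop:proj}), in particular connected, one of $A$, $B$ equals $X'$. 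This is the missing step that upgrades ``injective on each line'' to ``globally strictly monotone in $x_j$''. Your forward direction and the treatment of $H_0$ are fine; it is only this converse that needs the additional argument.
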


\begin{proof}
Clearly, $\{ x_1, \ldots ,x_n \}$ is a basis set.

If $f_1$ is either strictly increasing in $x_j$, or strictly decreasing in $x_j$, then the set
$\{ x_1, \ldots ,x_{j-1},y,x_{j+1}, \ldots ,x_n\}$ is obviously a basis.
Conversely, suppose that $\{ x_1, \ldots ,x_{j-1},y,x_{j+1}, \ldots ,x_n\}$ is a basis set, i.e.,
the restriction of $f_1$ to every non-empty interval
$$X \cap \{ x_1=c_1, \ldots ,x_{j-1}=c_{j-1},x_{j+1}=c_{j+1}, \ldots ,x_n=c_n \},$$
where $c=(c_1, \ldots ,c_{j-1},c_{j+1}, \ldots ,c_n) \in \Real^{n-1}$, is either
strictly increasing or strictly decreasing.
Let $A$ (respectively, $B$) be the set of points $c$ for which the restriction is strictly increasing
(respectively, strictly decreasing), and $X'$ the projection
of $X$ to ${\rm span} \{ x_1, \ldots ,x_{j-1},x_{j+1}, \ldots ,x_n \}$.
Thus, $X'= A \cup B$.
Because $f_1$ is continuous, both sets, $A$ and $B$, are open in $X'$.
Since, by Proposition~\ref{prop:proj}, $X'$ is connected, we conclude that either $X'=A$ or $X'=B$.
\end{proof}

\begin{definition}\label{def:monot_map}
For a non-empty semi-monotone set $X \subset \Real^n$ and $k \ge 1$, let
$${\bf f}=(f_1, \ldots ,f_k):\> X \to \Real^k$$ be a continuous and bounded map and let
${\bf F}:=\{(\x, \y)|\> \x \in X,\> \y={\bf f}(\x) \} \subset \Real^{n+k}$ be its graph.
Associate with ${\bf f}$ a system $\bf m$ of basis sets as in Definition~\ref{def:matroid}.
Define a map $\bf f$ to be {\em monotone}, by induction on $n \ge 1$.

If $n=1$, the map ${\bf f }$ is monotone if for every $i$ the function $f_i$ is monotone.

Assume that monotone maps on non-empty semi-monotone subsets of  $\Real^{n-1}$ are defined.

A map ${\bf f}$ is {\em monotone} if for every $i=1, \ldots ,k$, and every $j=1, \ldots ,n$ such
that $f_i$ is not independent of $x_j$, the following holds.
\begin{itemize}
\item[(i)]
For every $b \in \Real$, the intersection ${\bf F} \cap \{y_i=b \}$
(considered as a set in
${\rm span} \{ x_1, \ldots, x_n,y_1, \ldots ,y_{i-1}, y_{i+1}, \ldots ,y_k \}$),
when non-empty, is the graph of a monotone map, denoted by ${\bf f}_{i,j,b}$,
from a semi-monotone subset of
${\rm span} \{ x_1, \ldots ,x_{j-1},x_{j+1}, \ldots ,x_n \}$, into
${\rm span} \{ y_1, \ldots ,y_{i-1},x_j,y_{i+1}, \ldots ,y_k \}$.
\item[(ii)]
The system of basis sets associated with ${\bf f}_{i,j,b}$ does not depend on $b \in \Real$.
\end{itemize}
\end{definition}

\begin{remark}\label{rem:indep_j}
It follows from Theorem~\ref{th:matroid} that if the conditions in Definition~\ref{def:monot_map}
hold for any one $j$, then they hold for every $j$ such that $f_i$ is not independent of $x_j$.
\end{remark}

\begin{example}
It is easy to check that an affine map ${\bf f}:\>  X \to \Real^k$ on a non-empty semi-monotone set
$X \subset {\rm span} \{x_1, \ldots ,x_n\}$ is a monotone map (cf. Example~\ref{ex:linear}).
\end{example}

\begin{lemma}\label{le:monot-1}
If the map $\f:=(f_1, \ldots ,f_k):\> X \to \Real^k$ is monotone, then
the map $(f_1, \ldots ,f_{k-1}):\> X \to \Real^{k-1}$
is also monotone.
\end{lemma}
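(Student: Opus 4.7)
The plan is to proceed by induction on $n$, the dimension of the domain of $\f$.

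For the base case $n=1$, Definition~\ref{def:monot_map} says monotonicity of $\f$ means each component $f_i$ is a monotone function, so dropping $f_k$ leaves $f_1, \ldots, f_{k-1}$ monotone, and hence $(f_1, \ldots, f_{k-1})$ is monotone by the same definition.

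For the inductive step, assume the lemma for all semi-monotone domains of dimension $n-1$. Let $\f = (f_1, \ldots, f_k): X \to \Real^k$ be monotone on semi-monotone $X \subset \Real^n$, set $\f' = (f_1, \ldots, f_{k-1})$, and let ${\bf F}$, ${\bf F}'$ be their graphs. The projection $\pi$ forgetting $y_k$ sends ${\bf F}$ bijectively onto ${\bf F}'$. I need to verify the two conditions of Definition~\ref{def:monot_map} for $\f'$. Note that since $\f'$ is just a restriction of $\f$, for each index $i \in \{1, \ldots, k-1\}$ the function $f_i$ is not independent of $x_j$ (as a component of $\f'$) if and only if it is not independent of $x_j$ as a component of $\f$, so we may use the hypothesis for such $i, j$.

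For condition (i), fix $i \in \{1, \ldots, k-1\}$ and $j$ with $f_i$ not independent of $x_j$, and fix $b \in \Real$. Since $\pi$ does not affect $y_i$, we have ${\bf F}' \cap \{y_i=b\} = \pi({\bf F} \cap \{y_i=b\})$. By the monotonicity of $\f$, the set ${\bf F} \cap \{y_i=b\}$ is the graph of a monotone map ${\bf f}_{i,j,b} : X_{i,j,b} \to {\rm span}\{y_1, \ldots, y_{i-1}, x_j, y_{i+1}, \ldots, y_k\}$ with $X_{i,j,b}$ semi-monotone in $\Real^{n-1}$. Because $i \le k-1$, the last component of ${\bf f}_{i,j,b}$ in this ordering is $y_k$, so the inductive hypothesis, applied to ${\bf f}_{i,j,b}$, yields that dropping this last component produces a monotone map whose graph is $\pi({\bf F} \cap \{y_i=b\}) = {\bf F}' \cap \{y_i=b\}$. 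This establishes condition (i) for $\f'$, with $({\bf f}')_{i,j,b}$ equal to ${\bf f}_{i,j,b}$ with its $y_k$-component dropped.

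For condition (ii), observe that a basis set of $({\bf f}')_{i,j,b}$ is an $(n-1)$-element subset of $\{x_1, \ldots, x_{j-1}, x_{j+1}, \ldots, x_n, y_1, \ldots, y_{i-1}, x_j, y_{i+1}, \ldots, y_{k-1}\}$ on which the corresponding projection of the graph is injective, and this is exactly a basis set of ${\bf f}_{i,j,b}$ that does not contain $y_k$. Hence the system of basis sets of $({\bf f}')_{i,j,b}$ is obtained from that of ${\bf f}_{i,j,b}$ by discarding the sets containing $y_k$; since the latter is independent of $b$ by the monotonicity of $\f$, so is the former. The main subtle point is the bookkeeping in this induction --- ensuring that the coordinate $y_k$ of $\f$ plays the role of the last component in the ordering of ${\bf f}_{i,j,b}$ for every $i \le k-1$ --- and this is precisely what makes the inductive hypothesis directly applicable.
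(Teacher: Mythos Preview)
Your proof is correct and follows essentially the same approach as the paper: induction on $n$, identifying $(\f')_{i,j,b}$ with the map obtained from $\f_{i,j,b}$ by dropping its $y_k$-component, and applying the inductive hypothesis. Your treatment is in fact more explicit than the paper's on condition~(ii), where you spell out that the basis sets of $(\f')_{i,j,b}$ are exactly those of $\f_{i,j,b}$ not containing $y_k$, hence independent of $b$; the paper leaves this step implicit.
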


\begin{proof}
For any map ${\bf g}:\> X \to \Real^k$, let $[{\bf g}]:\> X \to \Real^{k-1}$ denote the map
obtained from $\bf g$ by removing the $k$-th component.

The proof is by induction on $n$, with the base $n=1$ being trivial.
Choose any $b \in \Real$.
By the inductive hypothesis applied to the monotone map $\f_{i,j,b}$, where $i \neq k$, the map
$[\f_{i,j,b}]$ is monotone.
But $[\f_{i,j,b}]$ coincides with $[\f]_{i,j,b}$.
Hence the requirements (i) and (ii) in Definition~\ref{def:monot_map} are proved for $[\f]$.
\end{proof}

\begin{theorem}\label{th:monot-1}
The map $\f=(f_1):\> X \to \Real$ is monotone if and only if the function $f_1$ is monotone.
\end{theorem}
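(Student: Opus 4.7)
The plan is induction on $n$. For $n=1$ both assertions reduce to Definition~\ref{def:monotone}, so the base case is immediate. Assume the equivalence in $n-1$ variables; I now treat the two directions for $n\ge 2$ separately.

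$(\Rightarrow)$ Suppose $\f=(f_1)$ is monotone as a map. If $f_1$ is independent of every $x_j$, it is constant on the connected semi-monotone set $X$ and hence trivially monotone. Otherwise I plan to apply Lemma~\ref{le:function_graph}, checking its two hypotheses. For its condition (ii), fix any $j$ with $f_1$ not independent of $x_j$: Definition~\ref{def:monot_map} asserts that each non-empty $F\cap\{y=b\}$ is the graph of a monotone map in $n-1$ variables, which by the inductive hypothesis is the graph of a monotone function. For condition (i), suppose $f_1$ is not independent of some $x_\ell$; applying Definition~\ref{def:monot_map} with $j=\ell$, each $F\cap\{y=b\}$ becomes the graph of $x_\ell$ as a function of the remaining coordinates, so on every fiber of $X$ parallel to the $x_\ell$-axis (an open interval by Theorem~\ref{th:def2}) $f_1$ is injective and therefore, by continuity, strictly monotone in $x_\ell$. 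The direction is globally consistent because the subsets of the connected projection $X'$ of $X$ on which $f_1$ is strictly increasing (respectively, decreasing) in $x_\ell$ are both open and cover $X'$. Lemma~\ref{le:function_graph} then yields that $f_1$ is monotone.

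$(\Leftarrow)$ Suppose $f_1$ is a monotone function. If $f_1$ is constant, Definition~\ref{def:monot_map} is vacuous; otherwise fix $j$ with $f_1$ not independent of $x_j$. Corollary~\ref{cor:def_monotone} gives that each non-empty $F\cap\{y=b\}$ is the graph of a monotone function in $n-1$ variables, which by the inductive hypothesis is the graph of a monotone map $\f_{1,j,b}$, verifying condition (i). For (ii), by Lemma~\ref{le:function_basis} the basis system of $\f_{1,j,b}$ is determined by the set of $\ell\ne j$ for which $\f_{1,j,b}$ is non-constant in $x_\ell$. Using Lemma~\ref{le:exchange} and Remark~\ref{re:g}, the exchanged function $g_j$ (obtained from $f_1$ with respect to $x_j$) is monotone and has the same graph $F$ as $f_1$; since being a basis is an injectivity property of the graph alone, the basis systems of $f_1$ and of $g_j$ coincide, which via Lemma~\ref{le:function_basis} identifies the indices $\ell\ne j$ for which $g_j$ is non-constant in $x_\ell$ with the indices for which $f_1$ is non-constant in $x_\ell$. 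The restriction $\f_{1,j,b}$ of the monotone $g_j$ to $\{y=b\}$ inherits the same non-constancy pattern, so its basis system depends only on $f_1$, not on $b$.

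The main obstacle I foresee is the $(\Rightarrow)$ direction, since Definition~\ref{def:monot_map} encodes $\f$ through level sets and basis systems rather than through the behavior of $f_1$ in each coordinate directly; the trichotomy needed for Lemma~\ref{le:function_graph} must therefore be extracted from injectivity on one-dimensional fibers together with continuity and connectedness of $X'$, while avoiding any appeal to Remark~\ref{rem:indep_j}, whose justification rests on Theorem~\ref{th:matroid} proved only later.
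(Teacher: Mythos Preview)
Your proposal is correct, and in fact your $(\Rightarrow)$ argument is cleaner than the paper's. The paper establishes the trichotomy in each $x_\ell$ by a more circuitous route: it fixes a single $j$ with $f_1$ not independent of $x_j$, and then uses both condition~(i) \emph{and} the basis--independence condition~(ii) of Definition~\ref{def:monot_map}, together with a separate induction on $n$, to show that if $f_1$ is constant on one line parallel to the $x_n$-axis then it is constant on all of them. You bypass this entirely by observing that Definition~\ref{def:monot_map} already gives you condition~(i) for \emph{every} $\ell$ with $f_1$ not independent of $x_\ell$, so that the level sets are graphs over $\{x_1,\dots,\hat x_\ell,\dots,x_n\}$; injectivity on each $x_\ell$-fiber and the open--closed argument on the connected projection then yield the trichotomy directly. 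This uses only part~(i) of Definition~\ref{def:monot_map} and no auxiliary induction.

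For $(\Leftarrow)$ the two arguments are genuinely different but equivalent in strength. The paper fixes $j$ and, for each candidate basis set $\{x_1,\dots,\hat x_m,\dots,x_n\}$ of $\f_{1,j,b}$, argues directly that its being a basis is equivalent to $f_1$ being strictly monotone in $x_m$, which is a property of $f_1$ alone and hence independent of $b$. You instead pass through the exchanged monotone function $g_j$ of Lemma~\ref{le:exchange}: since $g_j$ and $f_1$ share the graph $F$, their basis systems coincide, and $\f_{1,j,b}=g_j|_{\{y=b\}}$ inherits the non-constancy pattern of $g_j$ (Remark~\ref{re:monotone_funct}), so by Lemma~\ref{le:function_basis} its basis system is determined by that fixed pattern. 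Both routes are sound; yours makes the role of the coordinate exchange explicit, while the paper's is slightly more self-contained. One minor point: when you invoke Corollary~\ref{cor:def_monotone} to verify Definition~\ref{def:monot_map}(i), you should note (as you implicitly do when introducing $g_j$) that the monotone function whose graph is $F\cap\{y=b\}$ can be taken over ${\rm span}\{x_1,\dots,\hat x_j,\dots,x_n\}$ specifically, since this is what Definition~\ref{def:monot_map}(i) demands; this follows from Remark~\ref{re:monotone_funct} applied to $g_j$.
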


\begin{proof}
Suppose that $\f$ is a monotone map, let $\bf F$ be its graph.
If $f_1$ is a constant function, then it is trivially monotone.
Suppose that $f_1$ is non-constant.
Then, by item (i) of Definition~\ref{def:monot_map}, the item (ii) of Lemma~\ref{le:function_graph}
is satisfied.

It remains to show that the condition (i) of Lemma~\ref{le:function_graph} is also
valid for the function $f_1$.
We prove this by induction on $n$, the base for $n=1$ being a requirement in
Definition~\ref{def:monot_map}.

Suppose that for some $c_1, \ldots ,c_{n-1} \in \Real$ the set
$X \cap \{x_1=c_1, \ldots ,x_{n-1}=c_{n-1} \}$ is non-empty, and the restriction of $f_1$ to
this set is independent of $x_n$, i.e., identically equal to some $b \in \Real$.
We now prove that $f_1$ is independent of $x_n$,
i.e., remains a constant {\em for all} fixed values of $x_1, \ldots ,x_{n-1}$.

Since we assumed $f_1$ to be non-constant on $X$,
there exists $j$ such that $f_1$ is not independent of $x_j$.
As $\bf f$ is a monotone map, by item (i) in Definition~\ref{def:monot_map},
${\bf F}\cap\{y_1=b\}$ is a graph of a monotone map $\f_{1,j,b}=(f_{1,j,b})$
defined on a semi-monotone subset of ${\rm span} \{ x_1,\ldots,x_{j-1},x_{j+1},\ldots,x_n \}$.
Then $j\ne n$, since ${\bf F} \cap \{x_1=c_1,\ldots,x_{n-1}=c_{n-1},y_1=b\}$
contains a non-empty interval.

The function $f_{1,j,b}$ is monotone by the inductive hypothesis.
The restriction of $f_{1,j,b}$ on $\{x_1=c_1, \ldots ,x_{j-1}=c_{j-1},x_{j+1}=c_{j+1},
\ldots ,x_{n-1}=c_{n-1} \}$  is constant (identically equal to $c_j$).
Then, by item (i) of Lemma~\ref{le:function_graph}, $f_{1,j,b}$ is constant for all fixed values of
$x_1, \ldots , x_{j-1},x_{j+1}, \ldots ,x_{n-1}$.
Since, by item (ii) of Definition~\ref{def:monot_map}, the system of basis sets associated with
the map $\f_{1,j,b}$ does not depend on $b \in \Real$, the property of $f_{1,j,b}$ to be constant
for all fixed values of $x_1, \ldots , x_{j-1},x_{j+1}, \ldots ,x_{n-1}$ holds for all $b$.
Since the graph $\{ x_j=f_{1,j,b} \}$ of the function $f_{1,j,b}$ is the level set of $f_1$ at $b$,
and the union of all level sets for all values $b$ is $X$, it follows that for all fixed values of
variables $x_1, \ldots ,x_{n-1}$ the function $f_1$ is independent of $x_n$.

It follows that if the restriction of $f_1$ to $X \cap \{x_1=c_1, \ldots ,x_{n-1}=c_{n-1} \}$
is not independent of $x_n$ for some $c_1, \ldots ,c_{n-1} \in \Real$, then it is
not independent for all fixed values of $x_1, \ldots ,x_{n-1}$.
Repeating the argument from the proof of Lemma~\ref{le:function_basis} (for $j=n$), we conclude that
$f_1$ is either strictly increasing in, or strictly decreasing in, or independent of $x_n$.

Replacing in this argument $n$ by each of $1, \ldots ,n-1$, we conclude that
the item (i) of Lemma~\ref{le:function_graph} is satisfied, and therefore
the function $f_1$ is monotone.

Now suppose the function $f_1$ is monotone.
Then, by item (ii) in Lemma~\ref{le:function_graph}, the map $\f=(f_1)$ satisfies (i) in
Definition~\ref{def:monot_map}.
To prove that $\f$ satisfies also (ii) in Definition~\ref{def:monot_map}, fix the numbers
$b \in \Real$ and $j \in \{ 1, \ldots ,n \}$.
Suppose that $\{ x_1, \ldots ,x_{n-1} \}$ is a basis set of $\f_{1,j,b}$, i.e.,
the fibre $\{ x_1=c_1, \ldots ,x_{n-1}=c_{n-1}, f_1=b \}$, whenever non-empty,
is a single point for each sequence $c_1, \ldots c_{n-1} \in \Real$.
Then, according to (ii) in Definition~\ref{def:monotone}, $f_1$ is non-constant on
$\{ x_1=c_1, \ldots ,x_{n-1}=c_{n-1} \}$,
in particular, the fibre $\{ x_1=c_1, \ldots ,x_{n-1}=c_{n-1}, f_1=a \}$,
whenever non-empty, is a single point for any other value $a \in \Real$ of $f_1$.
It follows that the set $\{ x_1, \ldots ,x_{n-1} \}$ is a basis also for $\f_{1,j,a}$.

Replacing in this argument $n$ by each of $1, \ldots ,n-1$, we conclude that the {\em system}
of basis sets of $\f_{1,j,b}$ does not depend on $b$, hence $\f$ is monotone.
\end{proof}

\begin{corollary}\label{cor:monot-1}
If the map $\f=(f_1, \ldots ,f_k):\> X \to \Real^k$ is monotone, then every function $f_i$
is monotone.
\end{corollary}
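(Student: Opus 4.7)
The plan is to reduce this to the one-component case already handled by Theorem~\ref{th:monot-1}, using Lemma~\ref{le:monot-1} to peel off unwanted components.

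First I would observe that Definition~\ref{def:monot_map} is symmetric in the indices $i=1,\ldots,k$: the conditions (i) and (ii) are required to hold for \emph{every} $i$, and neither the notion of a basis set in Definition~\ref{def:matroid} nor the monotonicity of ${\bf f}_{i,j,b}$ (which is itself defined inductively and symmetrically in its component indices) depends on the order in which the components of $\f$ are listed. Consequently, if $\f=(f_1,\ldots,f_k)$ is monotone, then for any permutation $\sigma$ of $\{1,\ldots,k\}$ the map $(f_{\sigma(1)},\ldots,f_{\sigma(k)})$ is also monotone. A brief verification of this invariance (by induction on $n$, mirroring the inductive structure of the definition) is the only bookkeeping needed; there are no geometric issues.

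Next, given any index $i \in \{1,\ldots,k\}$, I would choose a permutation $\sigma$ with $\sigma(1)=i$, and apply Lemma~\ref{le:monot-1} iteratively $(k-1)$ times to the monotone map $(f_{\sigma(1)},\ldots,f_{\sigma(k)})$, removing the last component at each step. This produces a monotone one-component map $(f_i):\> X \to \Real$. Theorem~\ref{th:monot-1} then tells us that the function $f_i$ itself is monotone.

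The only real point requiring care is the invariance of Definition~\ref{def:monot_map} under permutations of the components; once that is established, the corollary follows directly from the two stated results. I do not expect any obstacle here, since the symmetry is built into the universal quantifier ``for every $i=1,\ldots,k$'' in the definition.
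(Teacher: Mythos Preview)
Your approach is correct and essentially identical to the paper's: apply Lemma~\ref{le:monot-1} repeatedly to reduce to a single component, then invoke Theorem~\ref{th:monot-1}. The only difference is that the paper's one-line proof takes the permutation invariance of Definition~\ref{def:monot_map} for granted (writing simply ``Lemma~\ref{le:monot-1} implies that the map $(f_i)$ is monotone for every $i$''), whereas you spell it out; your added care is justified, since Lemma~\ref{le:monot-1} as stated only drops the \emph{last} component.
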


\begin{proof}
Lemma~\ref{le:monot-1} implies that the map $(f_i):\> X \to \Real$ is monotone for every
$i=1, \ldots, k$.
Then, by Theorem~\ref{th:monot-1}, the function $f_i$ is monotone.
\end{proof}

\begin{remark}\label{re:non-monotone}
The converse to Corollary~\ref{cor:monot-1} is false when $n>1$ and $k>1$.
For example, consider the map $\f=(f_1,f_2):\> (\frac{1}{2},1)^2 \to \Real^2$, where
$$f_1=x_2/x_1\quad \text{and}\quad f_2=x_1-x_2.$$
Both functions, $f_1$ and $f_2$, are monotone on $(\frac{1}{2},1)^2$ but their level curves,
$\{ f_1=1 \}$ and $\{ f_2=0 \}$, coincide while all other pairs of level curves are different.
It follows that the map $\f_{1,1,1}$ has two basis sets, $\{ x_1 \}$ and $\{ x_2 \}$,
while $\f_{1,1,2}$ has three basis sets, $\{ x_1 \}$, $\{ x_2 \}$ and $\{ y_2 \}$.
Thus, the condition (ii) of Definition~\ref{def:monot_map} is not satisfied for $\f$.
\end{remark}

\begin{lemma}\label{le:fiber}
Let $\f:\> X \to \Real^k$ be a monotone map, ${\bf F}$ the graph of $\f$.
Then for any $\{ i_1, \ldots ,i_\beta \} \subset \{ 1, \ldots ,k \}$
and $b_1, \ldots ,b_\beta \in \Real$, where $\beta \le k$,
the intersection ${\bf F}_\beta:={\bf F} \cap \{ y_{i_1}=b_1, \ldots ,y_{i_\beta}=b_\beta \}$
is either empty or the graph of a monotone map defined on a semi-monotone set in
some space ${\rm span}\{ x_{j_1}, \ldots ,x_{j_\alpha} \}$, where $\alpha + \beta  \ge n$.
\end{lemma}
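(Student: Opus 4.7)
The plan is to induct on $\beta$. The base case $\beta = 0$ is immediate: ${\bf F}_0 = {\bf F}$ is the graph of $\f$ itself, a monotone map on the $n$-dimensional semi-monotone set $X$, giving $\alpha = n$ and $\alpha + \beta = n$.

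For the inductive step, I assume the lemma for $\beta - 1$ and write ${\bf F}_\beta = {\bf F}_{\beta-1} \cap \{y_{i_\beta} = b_\beta\}$. If ${\bf F}_{\beta - 1}$ is empty, so is ${\bf F}_\beta$; otherwise the inductive hypothesis provides a monotone map $\g$ on a semi-monotone set $Y \subset {\rm span}\{x_{\ell_1}, \ldots, x_{\ell_{\alpha'}}\}$ with $\alpha' + (\beta - 1) \ge n$, whose graph is ${\bf F}_{\beta - 1}$. The coordinate $y_{i_\beta}$ was not fixed in the earlier steps, so it still labels a target component $g_{i_\beta}$ of $\g$, and by Corollary~\ref{cor:monot-1} this component is a monotone function on $Y$.

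Next I split into two cases. If $g_{i_\beta}$ is constant on $Y$ (which, via Definitions~\ref{def:strictly} and \ref{def:monotone} together with the fact that a function locally constant on an open connected set is globally constant, is equivalent to $g_{i_\beta}$ being independent of every $x_{\ell_m}$), then ${\bf F}_\beta = {\bf F}_{\beta-1}$ if the constant value equals $b_\beta$ and is empty otherwise; in the non-empty case the graph description via $\g$ is unchanged, so $\alpha = \alpha'$ and $\alpha + \beta \ge n + 1$. Otherwise $g_{i_\beta}$ is non-independent in some $x_j \in \{x_{\ell_1}, \ldots, x_{\ell_{\alpha'}}\}$, with Remark~\ref{rem:indep_j} ensuring that the choice of $j$ is immaterial; then Definition~\ref{def:monot_map}(i) applied to $\g$ exhibits ${\bf F}_\beta$ either as empty or as the graph of a monotone map on a semi-monotone subset of ${\rm span}\{x_{\ell_1}, \ldots, x_{\ell_{\alpha'}}\} \setminus \{x_j\}$, so that $\alpha = \alpha' - 1$ and $\alpha + \beta = \alpha' + \beta - 1 \ge n$.

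The main subtlety I expect to need to treat carefully is the bookkeeping for the target of $\g$: after several inductive steps its coordinates will generally comprise both the $y_i$'s not yet fixed and the $x_\ell$'s that were exchanged into the target at earlier steps. This does not obstruct the application of Definition~\ref{def:monot_map}(i), because the lemma only ever fixes $y$-coordinates and never $x$-coordinates, and the $x_j$ newly exchanged out of the domain at step $\beta$ is by construction drawn from the current domain of $\g$ and is therefore automatically disjoint from everything already present in the target. The clause (ii) of Definition~\ref{def:monot_map} concerning the system of basis sets is preserved along the way automatically, since at each step the object produced is itself a monotone map in the full sense of Definition~\ref{def:monot_map}, making the inductive machinery self-sustaining.
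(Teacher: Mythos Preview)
Your argument is correct and follows essentially the same route as the paper's proof: induct on $\beta$, write ${\bf F}_\beta = {\bf F}_{\beta-1} \cap \{y_{i_\beta}=b_\beta\}$, and split according to whether the $y_{i_\beta}$-component of the inductively given monotone map is constant in every remaining $x$-variable (in which case ${\bf F}_\beta$ is empty or equals ${\bf F}_{\beta-1}$) or not (in which case Definition~\ref{def:monot_map}(i) applies). Your invocations of Corollary~\ref{cor:monot-1} and Remark~\ref{rem:indep_j} are superfluous---the paper simply picks one $x_j$ in which the component is non-constant and applies the definition, without needing that the component is a monotone function or that the choice of $j$ is immaterial---but they do no harm.
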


\begin{proof}
The proof is by induction on $\beta$.
If $\beta=0$ (the base of the induction), then ${\bf F}_0={\bf F}$ and hence is the graph of a
monotone map from $X \subset {\rm span} \{ x_1, \ldots ,x_n \}$ to ${\rm span} \{ y_1, \ldots ,y_k \}$.
Let $I_0= \emptyset$, and $J_0= \{ x_1, \ldots ,x_n \}$.

By the inductive hypothesis,
$${\bf F}_{\beta -1}:={\bf F} \cap \{ y_{i_1}=b_1, \ldots ,y_{i_{\beta -1}}=b_{\beta -1} \}$$
is a graph of a monotone map ${\bf h}=(h_1, \ldots ,h_k)$ from a semi-monotone subset of
${\rm span}\> J_{\beta-1}$ to
$${\rm span}((\{ y_1, \ldots ,y_k \} \setminus  I_{\beta-1})
\cup (\{ x_1, \ldots ,x_n \} \setminus  J_{\beta -1})).$$

If the function $h_{i_\beta}$ is constant in each of the variables in $J_{\beta -1}$, then
the graph ${\bf F}_{\beta -1}$ lies in $\{ y_{i_\beta}=c \}$ for some $c \in \Real$,
hence the intersection ${\bf F}_\beta={\bf F}_{\beta -1} \cap \{ y_{i_{\beta}}=b_{\beta} \}$
is either empty
(when $c \neq b_{\beta}$), or coincides with the graph ${\bf F}_{\beta -1}$ (when $c = b_{\beta}$).
In this case we consider ${\bf F}_\beta$ as the graph of the same map
${\bf h}$, and assume $I_\beta=I_{\beta -1},\> J_\beta=J_{\beta -1}$.

Suppose now that $h_{i_\beta}$ is not constant in some of $J_{\beta -1}$,
let it be, for definiteness, $x_{j_{\alpha +1}}$.
Let $I_\beta:= I_{\beta -1} \cup \{ y_{i_{\beta}} \}$ and
$J_\beta:= J_{\beta -1} \setminus \{ x_{j_{\alpha +1}} \}$.
Then, by Definition~\ref{def:monot_map},
${\bf F}_\beta={\bf F}_{\beta -1} \cap \{ y_{i_{\beta}}=b_{\beta} \}$
is the graph of the monotone map ${\bf h}_{i_{\beta}, x_{j_{\alpha +1}}, b_{\beta}}$
from a semi-monotone subset of ${\rm span}(J_\beta)$ to
$${\rm span}((\{ y_1, \ldots ,y_k \} \setminus  I_\beta)
\cup (\{ x_1, \ldots ,x_n \} \setminus  J_\beta)).$$
\end{proof}

\begin{notation}
For a subset $H \subset \{ x_1, \ldots ,x_n,y_1, \ldots ,y_k \}$, let ${\bf f}(H):\>X \to \Real^{|H|}$
denote a map defined by the functions corresponding to the elements of $H$.
\end{notation}

\begin{lemma}\label{le:non_constant}
If $H$ is a basis set of a monotone map $\f:\> X \to \Real^k$, then every component of $\f(H)$
is non-constant on $X$.
\end{lemma}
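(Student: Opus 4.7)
My plan is to argue by contradiction, invoking invariance of domain. Writing the basis set as $H = \{x_{j_1}, \ldots, x_{j_\alpha}, y_{i_1}, \ldots, y_{i_\beta}\}$ with $\alpha + \beta = n$, its defining property (Definition~\ref{def:matroid}) is that the map
$$g := (x_{j_1}, \ldots, x_{j_\alpha}, f_{i_1}, \ldots, f_{i_\beta}): X \to \Real^n$$
is injective. The map $g$ is also continuous, since the $x_{j_\ell}$ are coordinate projections and each $f_{i_\ell}$ is continuous as a component of the monotone map $\f$. Because $X$ is a non-empty open subset of $\Real^n$ (being semi-monotone), invariance of domain forces $g(X)$ to be open in $\Real^n$.

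The coordinate components $x_{j_\ell}$ of $\f(H)$ are non-constant on $X$ for the trivial reason that $X$ is open and non-empty in $\Real^n$. The substantive case is a component $f_{i_\ell}$ corresponding to some $y_{i_\ell} \in H$. Suppose, for contradiction, that $f_{i_\ell} \equiv c$ on $X$ for some $c \in \Real$. Then the corresponding coordinate of $g$ is identically $c$, so $g(X) \subset \{y_{i_\ell} = c\}$. This hyperplane has empty interior in $\Real^n$ and thus cannot contain the non-empty open set $g(X)$, contradicting the conclusion of invariance of domain.

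I do not foresee any genuine obstacle: once $g$ is identified and continuity is noted, the argument is a one-line application of a standard topological theorem. An alternative fully inductive proof in the style of Theorem~\ref{th:monot-1}, following the recursive structure of Definition~\ref{def:monot_map} and tracking how basis sets transform under the passage to level sets ${\bf F} \cap \{y_i = b\}$ guaranteed by Definition~\ref{def:monot_map}(ii), is certainly possible but brings no additional insight and requires more combinatorial bookkeeping, so I would stick with the topological argument.
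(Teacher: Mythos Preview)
Your proof is correct and takes a genuinely different route from the paper's own argument. The paper proceeds exactly along the inductive lines you sketch in your last paragraph: it first observes that not all non-coordinate components of $\f(H)$ can be constant (else the fibres of $\f(H)$ would have positive dimension, contradicting injectivity), then picks a non-constant component $f_i$, passes via Definition~\ref{def:monot_map}(i) to the monotone map $\f_{i,j,b}$ on a level set, notes that $H \setminus \{y_i\}$ is a basis for it, and iterates. Your invariance-of-domain argument is shorter and in fact does not use monotonicity of $\f$ at all --- only continuity of $\f$ and openness of $X$ in $\Real^n$. One could even replace invariance of domain by the standard o-minimal fact that a definable injection preserves dimension, since all you need is that $g(X)$ be $n$-dimensional rather than open. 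The paper's approach, while less direct here, stays within the recursive machinery of Definition~\ref{def:monot_map} that is reused throughout the section.
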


\begin{proof}
If all components of $\f(H)$ are coordinate functions, then this is obvious.
If non-coordinate functions exist and all are constants, then the dimension of each non-empty fibre
of ${\bf f}(H)$ equals to the number of these functions, i.e., greater than zero, which contradicts
to $H$ being a basis set.
Take a component $f_i$ of $\f(H)$ which is not a constant on $X$, thus it is non-constant in
some variable $x_j$.
Then each non-empty fibre of $f_i$ is the graph of a monotone map $\f_{i,j,b}$,
according to (i) in the Definition~\ref{def:monot_map}.
Observe that $H \setminus \{ y_i \}$ is a basis set for the map $\f_{i,j,b}$, since the fibres
of $\f_{i,j,b}(H \setminus \{ y_i \})$ are exactly those fibres of $\f (H)$ on which $f_i=b$.
If each component in $\f(H \setminus \{ y_i \})$ is constant on all $(n-1)$-dimensional fibres
of $f_i$, then we get a contradiction with $H \setminus \{ y_i \}$ being a basis for $\f_{i,j,b}$.
Continuing by induction, we conclude that all non-coordinate functions of $f(H)$ are
non-constant on $X$.
\end{proof}

\begin{theorem}\label{th:matroid}
Let $\f:\> X \to \Real^k$ be a monotone map on a non-empty semi-monotone $X \subset \Real^n$,
and $\bf F$ its graph.
Then
\begin{itemize}
\item[(i)]
The system $\bf m$ of basis sets associated with $\f$ is a matroid of rank $n$.
\item[(ii)]
For each independent set $I= \{ x_{j_1}, \ldots ,x_{j_\alpha}, y_{i_1}, \ldots ,y_{i_\beta} \}$ of
$\bf m$, and all sequences $c_1, \ldots ,c_\alpha,b_1, \ldots ,b_\beta \in \Real$, the non-empty
intersections
$${\bf F} \cap \{ x_{j_1}=c_1, \ldots ,x_{j_\alpha}=c_\alpha, y_{i_1}=b_1,
\ldots ,y_{i_\beta}=b_\beta \}$$
are graphs of  monotone maps,
having the same associated matroid ${\bf m}_I$
of rank $n - \alpha -\beta$ (the contraction of ${\bf m}$ by $I$).
In particular, all such intersections have the same dimension $n - \alpha - \beta$.
\end{itemize}
\end{theorem}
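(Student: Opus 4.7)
The proof is by induction on $n$, establishing (i) and (ii) simultaneously.

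For $n = 1$, the set $X$ is an open interval, and Theorem~\ref{th:monot-1} together with Corollary~\ref{cor:monot-1} gives that each $f_i$ is a monotone function. Lemma~\ref{le:function_basis} then identifies the basis sets of $\f$ as the singletons $\{x_1\}$ and $\{y_i\}$ for those $i$ such that $f_i$ is non-constant; these form the bases of a rank-$1$ uniform matroid. Statement (ii) holds because every non-empty independent set is a singleton, and slicing by it produces a single point, matching the trivial rank-$0$ contraction.

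For the inductive step, assume the theorem in dimension $n-1$. Every basis of $\f$ has cardinality $n$ by Def~\ref{def:matroid}, so (i) reduces to basis exchange. The central intermediate claim is that for any basis $H$ of $\f$ and any $a \in H$, the slice ${\bf F} \cap \{a = c\}$ (when non-empty) is the graph of a monotone map on a semi-monotone subset of $\Real^{n-1}$, and $H \setminus \{a\}$ is a basis of this sliced map: the former comes from Lemma~\ref{le:fiber} and Def~\ref{def:monot_map}(i) when $a = y_i$, and from Corollary~\ref{cor:def2} combined with Remark~\ref{re:monotone_funct} and the inductive hypothesis when $a = x_j$; the latter follows since the coordinate $a$ is constant on the slice and $\pi_H$ is injective on $\bf F$. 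Condition~(ii) of Def~\ref{def:monot_map} guarantees the associated slice matroid is independent of $c$.

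For the basis-exchange axiom, fix bases $H_1, H_2$ of $\f$ and $a \in H_1 \setminus H_2$, and slice by $\{a = c_0\}$. The intermediate claim gives $H_1 \setminus \{a\}$ as a basis of the slice. Injectivity of $\pi_{H_2}$ on $\bf F$ passes to injectivity on the $(n-1)$-dimensional slice, forcing (via the inductive identification of matroid rank with projection dimension) $H_2$ to contain a basis $B$ of the slice matroid. Inductive basis exchange applied to the slice-bases $H_1 \setminus \{a\}$ and $B$, followed by reattaching $a$, yields the required $b \in H_2 \setminus H_1$ with $(H_1 \setminus \{a\}) \cup \{b\}$ a basis of $\f$. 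For (ii), one iterates the intermediate claim on the coordinates of $I$, producing at each step a monotone-map graph of one lower dimension; the matroid after all $|I|$ peelings is the contraction ${\bf m}_I$ of $\bf m$ by $I$, and each non-empty intersection has dimension $n - |I|$.

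The principal obstacle lies in the reattachment step of the exchange argument: $(H_1 \setminus \{a\}) \cup \{b\}$ yields an injective projection on the fixed slice $\{a = c_0\}$, but one must establish injectivity on all of $\bf F$. This will require varying $c$ and exploiting the uniformity of the slice matroid across $c$---an essential role for Def~\ref{def:monot_map}(ii)---together with continuity of $\f$ to propagate injectivity across slices.
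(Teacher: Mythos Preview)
Your outline differs substantially from the paper's argument, and the divergence produces two genuine gaps.

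\medskip
\emph{The case $a=x_j$ of the intermediate claim.} Showing that ${\bf F}\cap\{x_j=c\}$ is the graph of a monotone map is precisely statement~(ii) for the singleton $I=\{x_j\}$, and it is not a consequence of Corollary~\ref{cor:def2} and Remark~\ref{re:monotone_funct}: those results concern semi-monotone sets and monotone \emph{functions}, and Remark~\ref{re:non-monotone} shows that componentwise monotonicity of the $f_i$ does not make the map monotone. The paper handles this case by verifying Definition~\ref{def:monot_map} for the restriction, and the key device is a reversal of the slicing order: one first cuts by some $\{y_i=b\}$ (this is immediate from Definition~\ref{def:monot_map}(i)), obtaining a monotone map in $n-1$ variables, and \emph{then} cuts by $\{x_\ell=c\}$, which is now legitimate by the inductive hypothesis in dimension $n-1$. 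Your sketch does not reproduce this step.

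\medskip
\emph{The exchange argument.} Applying basis exchange inside the slice matroid to the pair $H_1\setminus\{a\}$ and $B\subset H_2$ yields an element $b$ with $((H_1\setminus\{a\})\setminus\{h\})\cup\{b\}$ a basis \emph{of the slice} for some $h$; it does not produce a $b$ such that $(H_1\setminus\{a\})\cup\{b\}$ is a basis of $\f$. For the latter, what is required is that the function corresponding to $b$ be injective along every one-dimensional \emph{fibre} of $\f(H_1\setminus\{a\})$, and these fibres run transversally to your slices $\{a=c\}$. This is exactly how the paper proceeds: it sets $Z=H\setminus\{h\}$, shows (via Lemma~\ref{le:non_constant} and the inductive part~(ii)) that every non-empty fibre of $\f(Z)$ is the graph of a monotone map on an interval and that all such fibres carry the \emph{same} matroid, and then argues by contradiction that if every $g\in G\setminus H$ failed to extend $Z$ to a basis then every coordinate of $G$ would be constant on those fibres, so $\f(G)$ could not be injective. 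Your proposed repair---vary $c$ and exploit uniformity of the slice matroid---is heading toward this fibrewise picture, but to carry it out you will have to abandon the use of exchange on a single slice and instead argue directly about the curves that are the fibres of $\f(H_1\setminus\{a\})$. The role of Definition~\ref{def:monot_map}(ii), combined with the inductive part~(ii), is precisely to turn ``$b$ constant on one fibre'' into ``$b$ constant on every fibre'', which is what makes the contradiction go through.
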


\begin{proof}
By the definition of a matroid, to prove (i), we need to check the {\em basis axiom}
(\cite{Welsh}, p. 8), which states
that for any two basis subsets $H,\> G \subset \{ x_1, \ldots ,x_n, y_1, \ldots ,y_k \}$,
if $h \in H \setminus G$ then there exists $g \in G \setminus H$ such that the set
$\{ g \} \cup (H \setminus \{ h \})$ is a basis set.
We prove this property by induction on $n$ simultaneously with the property (ii).
The base for $n=1$ is obvious for both (i) and (ii).

First we prove the inductive step for (i).
Fix any two basis subsets $H,\> G$, and an element $h \in (H \setminus G)$.
Consider the set $Z= H \setminus \{ h \}$.
We prove that each non-empty fibre of $\f (Z)$ is a graph of a univariate monotone map.
This is obvious if all components of $\f (Z)$ are coordinate functions.
If some non-coordinate functions exist then, by Lemma~\ref{le:non_constant}, they are non-constant.
Let $f_i$ be one of them.
In particular, $f_i$ is not independent of some variable $x_j$.
According to (i) in the Definition~\ref{def:monot_map}, each non-empty
set ${\bf F} \cap \{y_i=b \}$ is the graph of a monotone map $\f_{i,j,b}$.
Observe that $H \setminus \{ y_i \}$ is a basis set for the map $\f_{i,j,b}$ since the fibres
of $\f_{i,j,b}(H \setminus \{ y_i \})$ are exactly those fibres of $\f (H)$ on which $f_i=b$.
Note that the matroid ${\bf m}_i$ associated with $\f_{i,j,b}$ is the {\em contraction} of
the matroid $\bf m$ by $y_i$.
Since $H$ is a basis set for $\f$, all these fibres are single points, hence
$\f_{i,j,b}(H \setminus \{ y_i \})$ is injective.
Recall that $H \setminus \{ y_i \}=Z \setminus \{ h \}$.

By the inductive hypothesis of (ii), all non-empty fibres of $\f_{i,j,b}(H \setminus \{ y_i \})$
are one-dimensional graphs of monotone functions.
Since these fibres coincide with the fibres of $\f (Z)$ on which $f_i=b$, we conclude that all
non-empty fibres of $\f (Z)$ are one-dimensional graphs of monotone maps.

Now let $z \in \{ x_1, \ldots ,x_n, y_1, \ldots ,y_k \}$, and let $\psi$ be the corresponding
function.
Suppose that $Z \cup \{ z \}$ is not a basis set.
Then the restriction of $\psi$ to some fibre $\Psi$ of the map ${\f (Z)}$ has fibres of dimension
greater than zero.
But, as we proved above, all fibres of ${\f (Z)}$ are one-dimensional graphs of monotone maps,
hence $\psi$ is constant on $\Psi$.
Then item (ii) in the inductive hypothesis implies that $\psi$ is constant on each fibre of ${\f (Z)}$.

Suppose that the basis axiom is violated, i.e., given $h \in H$, for every $g \in G \setminus H$
the set $Z \cup \{ g \}$ is not a basis.
It follows that the function corresponding to every $g \in G \setminus H$
is constant on fibres of ${\f (Z)}$ (which are curves).
On the other hand, $G \cap H \subset Z$, so any function corresponding to $g \in G \cap H$
is constant on fibres of $\f (Z)$.
Therefore functions corresponding to all $g \in G$ are constant on fibres of $\f (Z)$,
hence fibres of $\f (G)$ are curves, thus $G$ is not a basis set, which is a contradiction.

Now we prove the inductive step of (ii).
It is sufficient to prove the statement for $|I|=1$ since the case of the general $I$ will
follow by induction on $|I|$.

If $I= \{ f_i \}$ for some $i=1, \ldots ,k$, then according to Lemma~\ref{le:non_constant},
$f_i$ is not a constant, and the statement follows immediately from Definition~\ref{def:monot_map}.

Now suppose that $I= \{ x_\ell \}$.

By Definition~\ref{def:monot_map} it is sufficient to prove that
\begin{itemize}
\item[(a)]
for all $i=1, \ldots k$, $j=1, \ldots ,n$ and $b \in \Real$, such
that $f_i$ is non-constant in $x_j$, the intersection ${\bf F} \cap \{ x_\ell=c \} \cap \{ y_i=b \}$
is the graph of a monotone map ${\bf g}_b$ on
a semi-monotone set in
${\rm span} \{ x_1, \ldots ,x_{\ell-1},x_{\ell+1}, \ldots ,x_{j-1},x_{j+1}, \ldots ,x_n \}$;
\item[(b)]
The matroid associated with ${\bf g}_b$ is the same for every $b \in \Real$.
\end{itemize}

By Definition~\ref{def:monot_map}, the set ${\bf F} \cap \{ y_i=b \}$ is the graph of a
monotone map $\f_{i,j,b}$.
Applying the inductive hypothesis to this monotone map we conclude that the set
${\bf F} \cap \{ y_i=b \} \cap \{ x_\ell=c \}$ is the graph of
a monotone map ${\bf g}_b$ on an $(n-2)$-dimensional semi-monotone set.
Hence, (a) is established.

By Definition~\ref{def:monot_map}, the maps $\f_{i,j,b}$ have the same system of basis sets
for all $b \in \Real$.
By the inductive hypothesis, this system is the matroid ${\bf m}_i$.
The common matroid for the maps ${\bf g}_b$ is obtained from ${\bf m}_i$ as follows.
Select all basis sets in ${\bf m}_i$ which contain the element $x_\ell$, and remove
this element from each of the selected sets.
The resulting system of sets forms the matroid for ${\bf g}_b$.
Note that this matroid is the {\em contraction} of ${\bf m}_i$ by $x_\ell$.
Since ${\bf m}_i$ is independent of $b$, so does this matroid, which proves (b).
\end{proof}

For any $I \subset \{x_1, \ldots ,x_n,y_1, \ldots ,y_k \}$, and ${\bf F} \subset \Real^{n+k}$,
let $T:= {\rm span}(I)$, and $\rho_T:\> {\bf F} \to T$ be the projection map.

\begin{theorem}\label{th:exchange}
Let ${\bf f}:\> X \to \Real^k$ be a monotone map on a non-empty semi-monotone $X \subset \Real^n$
having the graph ${\bf F} \subset \Real^{n+k}$.
Let ${\bf m}$ be the matroid associated with $\f$, $H$ a basis set of ${\bf m}$,
and $T:= {\rm span} (H)$.
Then $\rho_T({\bf F})$ is semi-monotone, and ${\bf F}$ is the graph of a monotone map on
$\rho_T({\bf F})$.
\end{theorem}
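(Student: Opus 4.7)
The plan is to induct on $\beta := |H \cap \{y_1, \ldots, y_k\}|$. The base case $\beta = 0$ forces $H = \{x_1, \ldots, x_n\}$, so $\rho_T(\mathbf{F}) = X$ is semi-monotone and $\mathbf{F}$ is the graph of $\f$, giving both conclusions immediately.

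For the inductive step $\beta \geq 1$, I fix some $y_i \in H$ and apply the basis-exchange axiom from Theorem~\ref{th:matroid}(i), using the reference basis $G = \{x_1, \ldots, x_n\}$, to obtain $x_j \in G \setminus H$ such that $H' := (H \setminus \{y_i\}) \cup \{x_j\}$ is again a basis with $\beta - 1$ $y$-coordinates. The inductive hypothesis applied to $H'$ gives that $U' := \rho_{T'}(\mathbf{F})$ is semi-monotone and that $\mathbf{F}$ is the graph of a monotone map $\mathbf{g}' = (g'_m)_{m \notin H'}$ on $U'$. I then carry out the single-variable swap $x_j \leftrightarrow y_i$: the component $g'_i$ is a monotone function on $U'$ (Corollary~\ref{cor:monot-1}) and must be non-constant in $x_j$, for otherwise the map associated with $H = (H' \setminus \{x_j\}) \cup \{y_i\}$ would fail to be injective on $\mathbf{F}$, contradicting $H$ being a basis. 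Lemma~\ref{le:Usemi-monotone} together with Lemma~\ref{le:exchange}, invoked via Remark~\ref{re:g} to swap the variable $x_j$ in place of the last variable, then yields that $U := \rho_T(\mathbf{F})$ is semi-monotone and that $x_j$ is expressible as a monotone function on $U$ in terms of the coordinates in $H$. This establishes the first assertion.

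Since $H$ is a basis, $\mathbf{F}$ is automatically the graph of some map $\mathbf{g} = (g_m)_{m \notin H}$ on $U$, and it remains to check that $\mathbf{g}$ satisfies Definition~\ref{def:monot_map}. For this I run a secondary induction on the dimension $n$, the base $n = 1$ being handled directly via Theorem~\ref{th:monot-1}. First I verify that each component $g_m$ satisfies Definition~\ref{def:monotone}(ii): applying Theorem~\ref{th:matroid}(ii) with the independent set $I = H \setminus \{h\}$, each non-empty fiber $\mathbf{F} \cap \{H \setminus \{h\} = \mathrm{const}\}$ is a 1-dimensional graph of a monotone map whose associated rank-$1$ matroid contains $\{h\}$ as a basis; by the $n = 1$ case of the secondary induction, the fiber is parametrized monotonically by $h$, so each other coordinate, in particular $m$, is strictly monotone or constant in $h$ along the fiber. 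A connectedness argument in the spirit of the proof of Lemma~\ref{le:function_basis} forces the direction of monotonicity to be constant across fibers, giving that $g_m$ is strictly monotone in, or independent of, $h$ globally. Consequently, dependence of $g_m$ on $h$ is equivalent to $(H \setminus \{h\}) \cup \{m\}$ being a basis of $\mathbf{m}$.

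With that correspondence, Definition~\ref{def:monot_map}(ii) for $\mathbf{g}$ is immediate from Theorem~\ref{th:matroid}(ii): for each pair $(m, h)$ with $g_m$ non-independent of $h$, the fibers $\mathbf{F} \cap \{\mathrm{coord}_m = b\}$ share the associated matroid $\mathbf{m}/\{m\}$ independently of $b$. Condition (i) follows by applying the secondary inductive hypothesis in dimension $n - 1$ to each such fiber --- which, by Lemma~\ref{le:fiber}, is itself the graph of a monotone map in an $(n-1)$-dimensional ambient space --- using the basis $H \setminus \{h\}$ of $\mathbf{m}/\{m\}$, realizing the fiber as the graph of a monotone map on a semi-monotone subset of ${\rm span}(H \setminus \{h\})$. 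The main obstacle is the matroid bookkeeping binding the two inductions together, and in particular the step linking the fiberwise strict monotonicity of $g_m$ in $h$ to the global basis property of $(H \setminus \{h\}) \cup \{m\}$; this is what ties the analytic non-independence condition in Definition~\ref{def:monot_map} to the combinatorics of $\mathbf{m}$.
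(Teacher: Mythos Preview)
Your proof is correct and follows essentially the same strategy as the paper's: reduce to a chain of single swaps $x_j \leftrightarrow y_i$ via the basis-exchange axiom (your induction on $\beta$), and at each swap verify Definition~\ref{def:monot_map} for the new map using Theorem~\ref{th:matroid}. The paper's argument is much terser---it simply asserts that Theorem~\ref{th:matroid} gives each slice $\mathbf F\cap\{h=b\}$ as ``the graph of a monotone map with the matroid independent of $b$'' and declares Definition~\ref{def:monot_map} verified---whereas you correctly observe that one must still place that slice over the \emph{specific} domain ${\rm span}(H\setminus\{h'\})$, and you supply the missing step by applying the theorem itself in dimension $n-1$. That extra care is exactly what the paper leaves implicit.

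One small organizational remark: your two inductions are nested the wrong way round. As written, the ``secondary'' induction on $n$ sits inside the inductive step on $\beta$, but in step~(10) you invoke the theorem at dimension $n-1$ for a basis $H\setminus\{h\}$ that may still contain $\beta$ of the $y$-variables (when $h$ is an $x$-coordinate), so you need the full statement at $n-1$ for \emph{all} values of $\beta'$, not just $\beta'<\beta$. The fix is cosmetic: make $n$ the outer induction and $\beta$ the inner one (equivalently, induct lexicographically on $(n,\beta)$). With that adjustment the argument goes through exactly as you wrote it.
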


\begin{proof}
Suppose that $H=\{ x_1, \ldots ,x_{j-1},y_i,x_{j+1}, \ldots ,x_n \}$ for some $i$ and $j$.

Lemma~\ref{le:Usemi-monotone} implies that $\rho_T({\bf F})$ is a semi-monotone set.
Observe that ${\bf F}$ is the graph of a continuous map
$(f_1, \ldots ,f_{i-1},x_j,f_{i+1}, \ldots ,f_k)$ on $\rho_T({\bf F})$.
This map is monotone by the definition, since according to Theorem~\ref{th:matroid}, for each
$h \in \{ y_1, \ldots ,y_{i-1},x_j,y_{i+1}, \ldots ,y_k \}$ and for each $b \in \Real$,
the intersection ${\bf F} \cap \{ h=b \}$, if non-empty, is the graph of a monotone map with the
matroid independent of $b$.

For an arbitrary basis $H$, by the matroid's basis axiom, there exists a sequence
$$H_0:= \{ x_1, \ldots ,x_n \}, H_1, \ldots ,H_{t-1}, H_t=H$$
such that the basis $H_{\ell +1}$ is obtained from the basis $H_\ell$ by replacing a variable
of the kind $x_j$ by a variable of the kind $y_i$.
Applying the argument, described above to each such replacement, we conclude the proof.
\end{proof}

\begin{definition}\label{def:quasi-affine}
Let a bounded continuous map $\f=(f_1, \ldots ,f_k)$ defined on an open bounded non-empty set
$X \subset \Real^n$ have the graph ${\bf F} \subset \Real^{n+k}$.
We say that $\f$ is {\em quasi-affine} if for any
$T:= {\rm span} \{ x_{j_1}, \ldots , x_{j_\alpha}, y_{i_1}, \ldots ,y_{i_\beta}\}$,
where $\alpha + \beta=n$, the projection $\rho_T$ is injective if and only if the image
$\rho_T({\bf F})$ is $n$-dimensional.
\end{definition}

\begin{remark}\label{re:quasi-affine}
Observe that in Definition~\ref{def:quasi-affine} the property of $\rho_T$ to be injective
is equivalent to $\{ x_{j_1}, \ldots , x_{j_\alpha}, y_{i_1}, \ldots ,y_{i_\beta}\}$
being a basis set associated with $\f$.

In the case of a function, the property to be quasi-affine is equivalent
to the condition on the function to be either strictly increasing in, strictly decreasing
in, or independent of any variable.
\end{remark}

\begin{lemma}\label{le:tangent}
The system of basis sets associated with a quasi-affine map $\f:\> X \to \Real^k$, having
the graph $\bf F$, coincides with the system of bases of the matroid associated with the affine
map whose graph is the tangent space of $\bf F$ at a generic smooth point.
\end{lemma}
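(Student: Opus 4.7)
The plan is to set up a chain of equivalences between the combinatorial criterion for $H$ to be a basis set associated with $\f$ (Definition~\ref{def:matroid}) and the combinatorial criterion for $H$ to be a basis of the matroid of the linear map carrying the tangent subspace (as formulated in Example~\ref{ex:linear}), with the quasi-affine hypothesis serving as the bridge.

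First I would select a generic smooth point $p\in {\bf F}$. Since $X$ is open in $\Real^n$ and $\f$ is continuous, the graph ${\bf F}$ is a definable $n$-dimensional set in $\Real^{n+k}$, and by o-minimal cell decomposition its smooth locus is open and dense. There are only finitely many subsets $H\subset\{x_1,\dots,x_n,y_1,\dots,y_k\}$ of cardinality $n$, so the set of smooth points at which the differential $d(\rho_T|_{\bf F})$ attains its maximal rank equal to $\dim\rho_T({\bf F})$ for \emph{every} such $T=\operatorname{span}(H)$ is also open and dense. Pick any $p$ in this set and put $L:=T_p{\bf F}$, an $n$-dimensional affine subspace of $\Real^{n+k}$.

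Next I would observe the key rank equality: for each coordinate subspace $T=\operatorname{span}(H)$ with $|H|=n$, the rank of $d(\rho_T|_{\bf F})$ at $p$ coincides both with $\dim\rho_T({\bf F})$ (by genericity) and with $\dim\rho_T(L)$ (since $L$ is the tangent space). Because $\dim L=n=\dim T$, we obtain the equivalence
\[
\rho_T({\bf F})\text{ is $n$-dimensional}\iff \rho_T|_L\text{ is a linear isomorphism}.
\]

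The proof is now a chain of equivalences. By Definition~\ref{def:matroid}, $H$ is a basis set associated with $\f$ exactly when $\rho_T|_{\bf F}$ is injective. By the quasi-affine hypothesis (Definition~\ref{def:quasi-affine}), this injectivity is equivalent to $\rho_T({\bf F})$ being $n$-dimensional. By the rank equality established above, this is in turn equivalent to $\rho_T|_L$ being a linear isomorphism. Finally, by Example~\ref{ex:linear}, this is exactly the condition for $H$ to be a basis of the matroid of the affine map whose graph is $L$. Concatenating these four equivalences gives the lemma.

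The only mildly delicate step is the first one, namely ensuring the existence of a single generic smooth point $p$ at which the rank of $d\rho_T$ is maximal simultaneously for all of the finitely many coordinate subspaces $T$ of dimension $n$; I expect this to be routine from o-minimal genericity, so the proof should be short.
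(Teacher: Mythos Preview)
Your proposal is correct and is essentially the same argument the paper has in mind: the paper's proof is the single line ``Follows immediately from Definitions~\ref{def:matroid} and \ref{def:quasi-affine},'' and your chain of equivalences (basis set $\Leftrightarrow$ $\rho_T$ injective $\Leftrightarrow$ $\dim\rho_T({\bf F})=n$ $\Leftrightarrow$ $\rho_T|_L$ an isomorphism $\Leftrightarrow$ $H$ a basis for the tangent linear map) is precisely the unpacking of that remark, together with the routine genericity argument the paper leaves implicit.
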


\begin{proof}
Follows immediately from Definitions~\ref{def:matroid} and \ref{def:quasi-affine}.

Observe that for some smooth points of $\bf F$ the matroids associated with the affine
maps, whose graphs are tangent spaces of $\bf F$ at these points, may be different from
the matroid at a generic smooth point (e.g., the origin for $\f(x)=x^3$ on $X=(-1,1)$).
\end{proof}

\begin{theorem}\label{th:vdd}
Every monotone map $\f:\> X \to \Real^k$ is quasi-affine.
\end{theorem}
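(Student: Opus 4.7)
The plan is to prove the statement by induction on $n$, exploiting the matroid structure of Theorem~\ref{th:matroid}. One direction of Definition~\ref{def:quasi-affine} is immediate: if $\rho_T:{\bf F}\to T$ is injective then, since ${\bf F}$ has dimension $n$, so does $\rho_T({\bf F})$. Thus, by Remark~\ref{re:quasi-affine}, the content of the theorem is to show that whenever $H:=\{x_{j_1},\ldots,x_{j_\alpha},y_{i_1},\ldots,y_{i_\beta}\}$ with $\alpha+\beta=n$ fails to be a basis of the associated matroid ${\bf m}$, the projection $\rho_T({\bf F})$ has dimension strictly less than $n$. For the base case $n=1$, Corollary~\ref{cor:monot-1} and Lemma~\ref{le:function_basis} tell us that $\{y_i\}$ is a non-basis precisely when $f_i$ is constant, in which case $\rho_{{\rm span}\{y_i\}}({\bf F})$ is a point.

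For the inductive step I split according to whether $H$ contains some $x_j$. If some $x_j\in H$, then since $\{x_1,\ldots,x_n\}$ is always a basis, the singleton $\{x_j\}$ is independent in ${\bf m}$, and Theorem~\ref{th:matroid}(ii) applies: for each $c$ in the $x_j$-projection of $X$, the slice ${\bf F}\cap\{x_j=c\}$ is the graph of a monotone map on a semi-monotone subset of $\Real^{n-1}$ with matroid the contraction ${\bf m}/\{x_j\}$. A standard matroid fact gives that $H\setminus\{x_j\}$ is a basis of ${\bf m}/\{x_j\}$ iff $H$ is a basis of ${\bf m}$, so here $H\setminus\{x_j\}$ is not a basis of the contracted matroid. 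By the inductive hypothesis applied to this smaller monotone map, the projection of each slice to ${\rm span}(H\setminus\{x_j\})$ has dimension at most $n-2$. Since $\rho_T({\bf F})$ is the union of these projections as $c$ varies in a one-dimensional set, o-minimal dimension theory yields $\dim\rho_T({\bf F})\le 1+(n-2)=n-1<n$.

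If instead $H\subset\{y_1,\ldots,y_k\}$, no slicing coordinate from $H$ is available. Let $I\subset H$ be a maximal independent subset of ${\bf m}$, so $|I|=r<n$; every $y_i\in H\setminus I$ lies in the matroid closure of $I$, equivalently $y_i$ is a loop in the contracted matroid ${\bf m}/I$. By Theorem~\ref{th:matroid}(ii), each non-empty fiber of ${\bf F}$ over ${\rm span}(I)$ is the graph of a monotone map with matroid ${\bf m}/I$. The key sub-claim is that an element $y_i$ is a loop in the matroid of a monotone map if and only if $f_i$ is constant on the domain: if $f_i$ is constant and $y_i$ belonged to a basis $B$, then ${\bf f}(B):X\to\Real^n$ would send an $n$-dimensional set injectively into a coordinate hyperplane, contradicting o-minimal dimension theory; conversely, if $f_i$ is non-constant, Corollary~\ref{cor:monot-1} together with Definition~\ref{def:monotone}(ii) produces an $x_j$ in which $f_i$ is strictly monotone, whence $\{x_1,\ldots,x_n,y_i\}\setminus\{x_j\}$ is a basis containing $y_i$. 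Applied to each contracted monotone map, this forces $f_i$ to be constant on every non-empty fiber of ${\bf F}$ over ${\rm span}(I)$, so globally on ${\bf F}$ the coordinate $y_i$ is determined by the $I$-coordinates. Consequently $\rho_T({\bf F})$ lies in a graph over $\rho_{{\rm span}(I)}({\bf F})$ of the $n-r$ dependent coordinates, giving $\dim\rho_T({\bf F})\le r<n$.

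The main obstacle is the second case, where the matroid language provides convenient bookkeeping but the geometric content---that matroid dependence among the chosen $y$'s corresponds to functional dependence of the $f_i$ on strictly fewer coordinates---must be extracted from the monotone structure via the loop/constant equivalence above. Making this correspondence and the matroid-contraction calculations rigorous is the bulk of the work.
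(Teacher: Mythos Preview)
Your argument is correct and takes a genuinely different route from the paper's. The paper proves the contrapositive by contradiction: assuming $\rho_T({\bf F})$ is $n$-dimensional but $\rho_T$ is not injective, it uses Lemma~\ref{le:fiber} to produce both a single-point fiber and a positive-dimensional fiber of the map ${\bf g}=(f_{i_1},\ldots,f_{i_\beta})$ restricted to two different $x$-slices, and then invokes the matroid-invariance part of Theorem~\ref{th:matroid}(ii) and of Definition~\ref{def:monot_map}(ii) to ``transport'' the positive-dimensional fiber through a descending chain of $y$-level restrictions until it collides, at the univariate level, with a fiber that must be a single point. Your proof is instead a direct dimension bound: in Case~1 you slice by an $x_j\in H$ and feed $H\setminus\{x_j\}$ into the inductive hypothesis for the contracted matroid, and in Case~2 you isolate a maximal independent $I\subset H$ and show, via the clean equivalence ``$y_i$ is a loop $\Leftrightarrow$ $f_i$ is constant'', that the remaining $y$-coordinates are determined by $I$ on each fiber, so $\dim\rho_T({\bf F})\le|I|<n$. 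Both proofs rest on Theorem~\ref{th:matroid}(ii); yours packages the contradiction-chasing of the paper's descent into a single matroid-theoretic statement (the loop/constant equivalence), which makes the logical structure more transparent and arguably more reusable, at the cost of requiring the reader to verify that equivalence (your justification via Corollary~\ref{cor:monot-1} and injectivity-preserves-dimension is fine). The paper's argument, by contrast, stays closer to the recursive Definition~\ref{def:monot_map} and never names the loop concept explicitly.
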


\begin{proof}
Let $\f:\> X \to \Real^k$ be a monotone map having the graph ${\bf F} \subset \Real^{n+k}$,
and $T:= {\rm span} \{ x_{j_1}, \ldots , x_{j_\alpha}, y_{i_1}, \ldots ,y_{i_\beta}\}$,
where $\alpha + \beta=n$.

If the projection $\rho_T$ is injective, then obviously $\rho_T({\bf F})$ is $n$-dimensional.

Conversely, observe that, by Lemma~\ref{le:monot-1}, the map
${\bf g}:=(f_{i_1}, \ldots ,f_{i_\beta})$ is monotone.
Suppose that, contrary to the claim, $\rho_T$ is not injective.
Lemma~\ref{le:fiber} implies that if the fiber of a monotone map over a value is
0-dimensional then it is a single point.
Hence, there are two points
$$(a_{j_1}, \ldots ,a_{j_\alpha},a_{i_1}, \ldots ,a_{i_\beta})\> \text{and}\>
(b_{j_1}, \ldots ,b_{j_\alpha},b_{i_1}, \ldots ,b_{i_\beta})$$
in ${\rm span} \{ x_{j_1}, \ldots , x_{j_\alpha}, y_{i_1}, \ldots ,y_{i_\beta}\}$
such that the fiber of ${\bf g}':={\bf g}|_{x_{j_1}=a_{j_1}, \ldots , x_{j_\alpha}=a_{j_\alpha}}$
over $(a_{i_1}, \ldots ,a_{i_\beta})$ is a single point,
while the fiber of ${\bf g}'':={\bf g}|_{x_{j_1}=b_{j_1}, \ldots , x_{j_\alpha}=b_{j_\alpha}}$
over $(b_{i_1}, \ldots ,b_{i_\beta})$ has the positive dimension.
By the part (ii) of Theorem~\ref{th:matroid}, applied to the independent set
$\{ x_{j_1}, \ldots , x_{j_\alpha} \}$ as $I$, the matroids, associated with ${\bf g}'$
and ${\bf g}''$ coincide.
In particular, there exists a point $(b^{(0)}_{i_1}, \ldots ,b^{(0)}_{i_\beta})$ such that
the fiber of ${\bf g}'$ over this point has the positive dimension.
Let ${\bf G}_0$ be the graph of ${\bf g}'$.

We proceed by induction on $\nu=0,1, \ldots ,\beta$.
According to Lemma~\ref{le:fiber}, for every $\nu \le \beta$ the intersection
${\bf G}_\nu:={\bf G}_0 \cap \{ y_{i_1}=a_{i_1} \ldots , y_{i_\nu}=a_{i_\nu} \}$
is the graph of a monotone map ${\bf g}^{(\nu)}$
(since the fiber of ${\bf g}'$ over $a_{i_1}, \ldots ,a_{i_\beta}$ is 0-dimensional, the set
${\bf G}_\nu$ is non-empty).
Also, since the fiber of ${\bf g}'$ is 0-dimensional, the map ${\bf g}^{(\nu)}$ is of the form
${\bf g}^{(\nu -1)}_{i_\nu,j,a_{i_\nu}}$ for an appropriate $j$.
Because ${\bf g}^{(\nu-1)}$ is monotone, the map ${\bf g}^{(\nu -1)}_{i_\nu,j,b^{(\nu -1)}_{i_\nu}}$
has the same matroid as ${\bf g}^{(\nu -1)}_{i_\nu,j,a_{i_\nu}}$.
In particular, there exists a point $(b^{(\nu)}_{i_{\nu +1}}, \ldots ,b^{(\nu)}_{i_\beta})$
such that the fiber of ${\bf g}^{(\nu -1)}_{i_\nu,j,a_{i_\nu}}$ over this point has
the positive dimension.

On the last step, for $\nu = \beta -1$ and an appropriate $j$, the two maps,
${\bf g}^{(\beta -1)}={\bf g}^{(\beta -2)}_{i_{\beta -1},j,a_{i_{\beta -1}}}$
and ${\bf g}^{(\beta -2)}_{i_{\beta -1},j,b^{(\beta -2)}_{i_{\beta -1}}}$, defined on an interval,
have the same matroids.
On the other hand, the component of ${\bf g}^{(\beta -2)}_{i_{\beta -1},j,a_{i_{\beta -1}}}$,
corresponding to $y_{i_\beta}$, is a non-constant monotone function on that interval,
while the component of ${\bf g}^{(\beta -2)}_{i_{\beta -1},j,b^{(\beta -2)}_{i_{\beta -1}}}$,
corresponding to $y_{i_\beta}$, is a constant function.
We get a contradiction.
\end{proof}

\begin{corollary}\label{cor:independent}
Let a monotone map $\f:\> X \to \Real^k$ have the graph ${\bf F} \subset \Real^{n+k}$
and the associated matroid $\bf m$.
A subset $H \subset \{ x_1, \ldots ,x_n,y_1, \ldots ,y_k \}$ is an independent set of $\bf m$
if and only if $\dim (\rho_L ({\bf F}))= |H|$, where $L:= {\rm span}\> H$.
\end{corollary}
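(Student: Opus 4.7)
The plan is to handle the two directions separately, reducing the harder converse to a linear-algebraic computation on the tangent space of $\mathbf{F}$ at a generic smooth point.

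\emph{Forward direction.} Assume $H$ is independent in $\mathbf{m}$. Since $\mathbf{m}$ has rank $n$ by Theorem~\ref{th:matroid}(i), the matroid augmentation property lets me extend $H$ to a basis $B \supset H$ of $\mathbf{m}$ with $|B|=n$. Theorem~\ref{th:exchange} then gives that $\rho_{\mathrm{span}(B)}(\mathbf{F})$ is a (non-empty) semi-monotone subset of $\mathrm{span}(B)\cong\Real^n$, hence open and $n$-dimensional. Since $H \subset B$, the projection $\rho_L$ factors as $\pi\circ\rho_{\mathrm{span}(B)}$, where $\pi:\mathrm{span}(B)\to L$ is the coordinate projection dropping the coordinates in $B\setminus H$. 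By Proposition~\ref{prop:proj}, $\pi(\rho_{\mathrm{span}(B)}(\mathbf{F}))=\rho_L(\mathbf{F})$ is also semi-monotone, hence open in $L$, so $\dim\rho_L(\mathbf{F})=|H|$.

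\emph{Converse.} Assume $\dim\rho_L(\mathbf{F})=|H|$. By Theorem~\ref{th:vdd}, $\mathbf{f}$ is quasi-affine, so by Lemma~\ref{le:tangent} the basis sets of $\mathbf{m}$ coincide with the bases of the linear matroid attached to the $n$-dimensional tangent space $T_p\mathbf{F}$ at a generic smooth point $p\in\mathbf{F}$, viewed as the graph of an affine map. Since a matroid is determined by its collection of bases, this identification extends to the full rank function. For the linear matroid of an $n$-dimensional subspace $V\subset\Real^{n+k}$ it is standard that the rank of any coordinate subset $S$ equals $\dim\rho_{\mathrm{span}(S)}(V)$; applied to $V=T_p\mathbf{F}$ and $S=H$ this yields $\mathrm{rank}_{\mathbf{m}}(H)=\dim\rho_L(T_p\mathbf{F})$. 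On the other hand, at a generic smooth point of the $n$-dimensional definable set $\mathbf{F}$, the rank of the differential $d(\rho_L)_p$ equals the generic rank of $\rho_L|_\mathbf{F}$, which equals $\dim\rho_L(\mathbf{F})=|H|$. Hence $\mathrm{rank}_{\mathbf{m}}(H)=|H|$, so $H$ is independent.

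The main obstacle is bridging the combinatorially-defined matroid $\mathbf{m}$ with the geometry of coordinate projections of $\mathbf{F}$. Lemma~\ref{le:tangent} supplies this bridge only for bases; promoting it to the rank function relies on the elementary matroid fact that bases determine the whole matroid, together with the linear-algebraic formula interpreting rank as projection dimension. The one further ingredient is the standard o-minimal statement that the generic rank of a definable map equals the dimension of its image, applied here to $\rho_L|_{\mathbf{F}}$. Both are routine once phrased carefully, and no further machinery beyond Theorems~\ref{th:matroid}, \ref{th:exchange}, \ref{th:vdd} and Lemma~\ref{le:tangent} is needed.
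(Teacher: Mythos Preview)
Your forward direction is essentially identical to the paper's: augment $H$ to a basis, project, and read off the dimension (you cite Theorem~\ref{th:exchange} and Proposition~\ref{prop:proj} where the paper cites Theorem~\ref{th:vdd}, but the content is the same).

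For the converse, your argument is correct but genuinely different from the paper's. The paper argues directly: since $\mathbf{F}$ is pure $n$-dimensional and $\dim\rho_L(\mathbf{F})=|H|$, one can complete $H$ to a subset $I$ of size $n$ with $\dim\rho_{\mathrm{span}(I)}(\mathbf{F})=n$; quasi-affineness (Theorem~\ref{th:vdd}) then forces $\rho_{\mathrm{span}(I)}$ to be injective, so $I$ is a basis and $H\subset I$ is independent. This uses only an elementary dimension observation plus the definition of quasi-affine. Your route instead passes through Lemma~\ref{le:tangent} to identify $\mathbf{m}$ with the linear matroid of $T_p\mathbf{F}$ at a generic smooth point, then invokes the o-minimal generic-rank theorem to equate $\dim\rho_L(T_p\mathbf{F})$ with $\dim\rho_L(\mathbf{F})$. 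This is sound (the two genericity conditions are each open-dense and can be met simultaneously; density of the smooth locus gives $\dim\rho_L(\mathbf{F}^{\mathrm{sm}})=\dim\rho_L(\mathbf{F})$), and it has the conceptual payoff of exhibiting the rank function of $\mathbf{m}$ geometrically. The paper's argument, by contrast, is shorter and avoids any appeal to differentials or generic-rank statements beyond what is already established in the text.
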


\begin{proof}
Let $|H|=m$.
If $H$ is an independent set of $\bf m$ then, by the matroid theory's
Augmentation Theorem (\cite{Welsh}, Ch.~1,
Section~5), there is a basis set $I$ of $\bf m$ such that $H \subset I$.
By Theorem~\ref{th:vdd}, $\dim (\rho_T ({\bf F}))=n$, where $T:= {\rm span}\> I$, and therefore
$\dim (\rho_L ({\bf F}))= m$, since $\rho_L ({\bf F})$ is the image of the projection of
$\rho_T ({\bf F})$ to $L$.

Conversely, suppose that $\dim (\rho_L ({\bf F}))= m$.
Clearly, $m \le n$.
Observe that for any definable pure-dimensional set $U$ in $\Real^{n+k}$, with $\dim U=n$, if
$\rho_L (U)=m$ then there is a subset $I \subset \{ x_1, \ldots ,x_n,y_1, \ldots ,y_k \}$ such
that $H \subset I$ and $\dim (\rho_T (U))=n$, where $T= {\rm span}\> I$.
Since $\bf F$, being the graph of continuous map on an open set in $\Real^n$, satisfies all the
properties of $U$, there exists a subset $I$ such that $\dim (\rho_T ({\bf F}))=n$.
Then $H$ is an independent set of $\bf m$, being a subset of its basis.
\end{proof}

\begin{remark}\label{re:tengent}
Lemma~\ref{le:tangent} immediately implies that the matroid associated with a monotone map, having
the graph $\bf F$, coincides with the matroid associated with the affine map whose graph
is the tangent space of $\bf F$ at a generic point.
\end{remark}

\section{Equivalent definitions of a monotone map,
and their corollaries}\label{sec:equivalent_defs_of_monotone_maps}

\begin{lemma}\label{le:sign}
Let $\f:\> X \to \Real^k$ be a monotone map on a semi-monotone set $X \subset \Real^n$, and $C$
a coordinate cone in $\Real^n$.
Then the restriction $\f|_C:\> X \cap C \to \Real^k$ is a monotone map.
\end{lemma}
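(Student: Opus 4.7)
The plan is to reduce to a single sign condition and then induct on $n$. A coordinate cone $C$ is a finite intersection of half-spaces and hyperplanes of the form $\{x_\ell\sigma c\}$ with $\sigma\in\{<,=,>\}$; since Proposition~\ref{prop:proj} ensures that each such intersection preserves semi-monotonicity of the domain, it is enough to handle a single piece $C=\{x_\ell\sigma c\}$ and iterate. The equality case is immediate from Theorem~\ref{th:matroid}(ii): the singleton $\{x_\ell\}$ is an independent set of the matroid associated with $\f$ (it lies inside the basis $\{x_1,\ldots,x_n\}$), so $\mathbf{F}\cap\{x_\ell=c\}$ is already the graph of a monotone map on a semi-monotone set.

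For the strict-inequality case I would induct on $n$. The base $n=1$ is routine: each component $f_i$ is monotone and its restriction to the sub-interval $X\cap C$ remains strictly increasing in, strictly decreasing in, or independent of $x_1$, while sub- and supermonotonicity survive intersection with a coordinate cone by Proposition~\ref{prop:proj}. For the inductive step, Remark~\ref{rem:indep_j} lets me verify conditions (i) and (ii) of Definition~\ref{def:monot_map} at a single $j$ per non-constant component $f_i$. Whenever $f_i$ depends on some $x_j$ with $j\neq\ell$, I take such a $j$: the slice $(\mathbf{F}|_C)\cap\{y_i=b\}$ equals the graph of the restriction of the monotone map $\f_{i,j,b}$ to $Y_{i,b}\cap\{x_\ell\sigma c\}$, where $\{x_\ell\sigma c\}$ is a coordinate cone inside the $(n-1)$-dimensional ambient space of $Y_{i,b}$. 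By the inductive hypothesis this restriction is monotone.

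For the matroid condition (ii) in this setting, the key observation is that the matroid of the restricted monotone map coincides with that of $\f_{i,j,b}$ itself: by Corollary~\ref{cor:independent} the matroid is determined by dimensions of projections to coordinate subspaces, and these dimensions do not change when one passes from a pure-dimensional graph to a non-empty open subgraph cut out by $C$. Combined with Theorem~\ref{th:matroid}(ii) applied to the original $\f$, this yields the required independence in $b$.

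The main technical obstacle is the residual subcase in which $f_i$ depends only on $x_\ell$, forcing $j=\ell$. Here $f_i$ factors as $g(x_\ell)$ for a strictly monotone one-variable function $g$, so the slice $\mathbf{F}\cap\{y_i=b\}$ is entirely contained in $\{x_\ell=g^{-1}(b)\}$. Consequently intersecting with $\{x_\ell\sigma c\}$ either leaves this slice unchanged (when $g^{-1}(b)\sigma c$) or deletes it, and each non-empty slice of $\f|_{X\cap C}$ remains the graph of the unmodified monotone map $\f_{i,\ell,b}$, inheriting its $b$-independent matroid directly from Theorem~\ref{th:matroid}(ii). This closes the induction and completes the verification of Definition~\ref{def:monot_map} for $\f|_{X\cap C}$.
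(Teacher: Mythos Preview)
Your proof is correct and follows essentially the same route as the paper: reduce to a single condition $\{x_\ell\,\sigma\,c\}$, dispose of the equality case via Theorem~\ref{th:matroid}(ii), and handle the strict inequality by induction on $n$ in the style of the last part of the proof of Theorem~\ref{th:matroid}. The paper's own proof is a two-line sketch; you have filled in details it omits, notably the explicit treatment of the residual subcase $j=\ell$ and the matroid-preservation argument (where you invoke Corollary~\ref{cor:independent}, whereas the analogous step in Theorem~\ref{th:matroid} is phrased directly as a contraction).
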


\begin{proof}
It is sufficient to consider just the cases of $C= \{ x_\ell =c \}$ and $C= \{ x_\ell >c \}$.
The first case follows directly from Theorem~\ref{th:matroid}, (ii).
The proof for the case $C= \{ x_\ell >c \}$ can be conducted by induction on $n$, completely analogous
to the last part of the proof of Theorem~\ref{th:matroid}.
\end{proof}

\begin{corollary}\label{cor:monot_in_box}
Let ${\bf F} \subset \Real^{n+k}$ be the graph of a monotone map $\f: X \to \Real^k$, and let
$$P := \bigcap_{1 \le j \le n+k} \{(z_1, \ldots ,z_{n+k})|\> a_j<z_j < b_j \}
\subset \Real^{n+k}$$
for some $a_j,\> b_j \in \Real,\> j= 1, \ldots ,n+k$.
Then ${\bf F} \cap P$ is either empty or the graph of a monotone function.
\end{corollary}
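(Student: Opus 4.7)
The plan is to realize the box $P$ as the intersection of $2(n+k)$ coordinate half-spaces
of the form $\{a_j < z_j\}$ and $\{z_j < b_j\}$, where $z_1, \ldots ,z_{n+k}$ enumerate
$x_1, \ldots ,x_n, y_1, \ldots ,y_k$, and then to impose these constraints one at a time while
maintaining the invariant that the current intersection is either empty or the graph of a
monotone map (possibly with respect to an $n$-element basis set of the associated matroid
different from the original $\{x_1, \ldots ,x_n\}$).

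The heart of the argument is a single-step reduction. Suppose ${\bf F}^* \subset \Real^{n+k}$
is the graph of a monotone map ${\bf g}$ defined on a semi-monotone set $Y \subset {\rm span}(H)$,
where $H \subset \{x_1, \ldots ,x_n,y_1, \ldots ,y_k\}$ is a basis of the matroid of ${\bf g}$,
and we wish to intersect with a coordinate half-space $\{c < z\}$ (the case $\{z < c\}$ is
symmetric). If $z \in H$, then $Y \cap \{c < z\}$ is the intersection of $Y$ with a coordinate
cone in the ambient space ${\rm span}(H)$; by Proposition~\ref{prop:proj} it is semi-monotone,
and by Lemma~\ref{le:sign} the restriction of ${\bf g}$ to it is monotone. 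If $z \notin H$,
then $z$ corresponds to some component function $g_\ell$ of ${\bf g}$; if $g_\ell$ is constant
on $Y$ then ${\bf F}^* \cap \{c < z\}$ is either empty or all of ${\bf F}^*$, and otherwise
the projection $\rho_{{\rm span}(z)}({\bf F}^*)$ is one-dimensional, so by
Corollary~\ref{cor:independent} the singleton $\{z\}$ is independent in the matroid of
${\bf g}$ and hence extends by matroid augmentation to a basis $\widetilde H$ containing $z$.
By Theorem~\ref{th:exchange}, ${\bf F}^*$ is then also the graph of a monotone map on the
semi-monotone set $\rho_{{\rm span}(\widetilde H)}({\bf F}^*)$, and we are reduced to the
previous case with $z \in \widetilde H$.

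Iterating this reduction over all $2(n+k)$ half-spaces bounding $P$ will yield the claim.
The main obstacle is tracking the possibly changing basis through the iterations, but this is
fully absorbed by Theorem~\ref{th:exchange}: being the graph of a monotone map is preserved
under basis exchange within the associated matroid, so Lemma~\ref{le:sign} remains
applicable to the newly chosen domain after each exchange step, and the invariant is
maintained throughout.
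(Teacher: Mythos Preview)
Your proof is correct and follows essentially the same route as the paper's: use Lemma~\ref{le:sign} to restrict by a coordinate cone in the current domain, and invoke Theorem~\ref{th:exchange} to switch bases so that each remaining constraint becomes a domain constraint. You process the $2(n+k)$ half-spaces one at a time and handle the constant-component edge case explicitly, whereas the paper's two-sentence proof does this in bulk, but the underlying idea is identical.
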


\begin{proof}
Let $P'$ be the image of the projection of $P$ to $X$.
By Lemma~\ref{le:sign}, the restriction $\f_{P'}:\> P' \to \Real^k$ is a monotone map.
Theorem~\ref{th:exchange} allows to apply the same argument to projections of $P$ to other
subspaces of $\Real^{n+k}$.
\end{proof}

The following theorem is a generalization of Lemma~\ref{le:def_monotone}
from monotone functions to monotone maps.

\begin{theorem}\label{th:def_monotone_map}
Let a bounded continuous quasi-affine map $\f=(f_1, \ldots ,f_k)$ defined on an open bounded
non-empty set $X \subset \Real^n$ have the graph ${\bf F} \subset \Real^{n+k}$.
The following three statements are equivalent.

\begin{enumerate}
\item[(i)]
The map $\f$ is monotone.
\item[(ii)]
For each affine coordinate subspace $S$ in $\Real^{n+k}$ the intersection ${\bf F} \cap S$ is connected.
\item[(iii)]
For each coordinate cone $C$ in $\Real^{n+k}$ the intersection ${\bf F} \cap C$ is connected.
\end{enumerate}
\end{theorem}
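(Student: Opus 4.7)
The plan is to establish the equivalences by proving the chain $(iii)\Rightarrow(ii)\Rightarrow(i)\Rightarrow(iii)$, supplemented by a direct proof of $(ii)\Rightarrow(iii)$ as well. The step $(iii)\Rightarrow(ii)$ is immediate, since every affine coordinate subspace is a coordinate cone. For $(ii)\Rightarrow(iii)$, I would imitate the final argument in the proof of Lemma~\ref{le:def_monotone}: hypothesis $(ii)$ applied to $S=\Real^{n+k}$ gives that ${\bf F}$ itself is connected, and applied to $\{z_j=b\}$ gives that each slice of ${\bf F}$ by such a hyperplane is connected; Lemma~\ref{le:connected} then upgrades an equality to a strict inequality, and an induction on the number of strict inequalities defining the coordinate cone $C$ yields connectedness of ${\bf F}\cap C$.

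For $(i)\Rightarrow(iii)$, split a coordinate cone $C$ into its equality part $C_=$ and its strict-inequality part. Iterated application of Lemma~\ref{le:fiber} (for the $y_i$-equalities in $C_=$) and Lemma~\ref{le:sign} (for the $x_j$-equalities) realizes ${\bf F}\cap C_=$ as either empty or the graph of a monotone map $\tilde{\f}$ on a semi-monotone set $\tilde X$. I would then impose the strict inequalities one at a time: a strict $x_j$-inequality is a coordinate-cone cut on $\tilde X$ (again handled by Lemma~\ref{le:sign}), while a strict $y_i$-inequality amounts to passing to $\{\tilde f_i<b\}$ or $\{\tilde f_i>b\}$; by Corollary~\ref{cor:monot-1} each $\tilde f_i$ is a monotone function, hence both sub- and supermonotone, so the resulting domain remains semi-monotone. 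After all cuts, ${\bf F}\cap C$ is the graph of a continuous map on a semi-monotone (hence connected) set, and is therefore connected.

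For $(ii)\Rightarrow(i)$, I would argue by induction on $n$. First, $X$ itself is semi-monotone: for any affine coordinate subspace $S'\subset\Real^n$ the set $S'\times\Real^k$ is an affine coordinate subspace of $\Real^{n+k}$, so by $(ii)$ the intersection ${\bf F}\cap(S'\times\Real^k)$ is connected, and its projection $X\cap S'$ is connected, whence Theorem~\ref{th:def2} applies. In the base case $n=1$, the graph $F_i$ of each component $f_i$ is homeomorphic to ${\bf F}$ via the projection to ${\rm span}\{x_1,y_i\}$, and every affine coordinate subspace of $\Real^2$ pulls back to one in $\Real^{1+k}$; Lemma~\ref{le:def_monotone} applied to each $f_i$ gives the required monotonicity, which is precisely the definition of monotone map for $n=1$. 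For the inductive step, fix $i,j,b$ with $f_i$ not independent of $x_j$. Quasi-affinity of $\f$ implies that the projection to ${\rm span}\{x_1,\ldots,\hat x_j,\ldots,x_n\}$ is injective on ${\bf F}\cap\{y_i=b\}$, exhibiting it as the graph of a map $\f_{i,j,b}$ on an open bounded subset of $\Real^{n-1}$; a direct dimension-count argument, comparing the bases of $\f$ and $\f_{i,j,b}$, shows that $\f_{i,j,b}$ is itself quasi-affine. Since any affine coordinate subspace of $\Real^{n+k-1}$ pulls back to one of $\Real^{n+k}$ via $\{y_i=b\}$, hypothesis $(ii)$ descends to $\f_{i,j,b}$, and the inductive hypothesis gives that $\f_{i,j,b}$ is monotone. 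Condition (ii) of Definition~\ref{def:monot_map}, independence of the basis system from $b$, then follows from Lemma~\ref{le:tangent}: the matroid of $\f_{i,j,b}$ equals that of the tangent space of ${\bf F}$ at a generic smooth point, contracted by $y_i$, and this is manifestly independent of $b$.

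The main obstacle I anticipate is the inductive step of $(ii)\Rightarrow(i)$: correctly identifying $\f_{i,j,b}$ as a well-defined map on a subset of $\Real^{n-1}$ requires the coordinate-exchange granted by quasi-affinity, and the matroid-invariance in $b$ must be extracted cleanly from the tangent-space viewpoint via Lemma~\ref{le:tangent} and the behavior of matroid contraction.
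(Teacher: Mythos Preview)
Your overall architecture matches the paper's: both rely on Lemma~\ref{le:sign} and Lemma~\ref{le:fiber} for the forward direction, on Lemma~\ref{le:connected} to pass from affine subspaces to cones, and on induction on $n$ together with quasi-affinity for the converse. Two points deserve attention, however.

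\textbf{The step $(i)\Rightarrow(iii)$.} Your argument handles the equalities in $C$ correctly, landing on a monotone map $\tilde{\f}$ on a semi-monotone $\tilde X$, and the first strict $y_i$-inequality is fine: $\tilde f_i$ is monotone by Corollary~\ref{cor:monot-1}, hence sub- and supermonotone, so $\{\tilde f_i<b\}$ is semi-monotone. But to impose a \emph{second} strict $y_{i'}$-inequality you would need $\tilde f_{i'}$ to remain submonotone on $\{\tilde f_i<b\}$; equivalently, you would need $\tilde{\f}|_{\{\tilde f_i<b\}}$ to still be a monotone map so that Corollary~\ref{cor:monot-1} applies again. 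That is not established (and indeed is essentially a corollary of the theorem you are proving). The paper sidesteps this by proving $(i)\Rightarrow(ii)$ directly---only equalities appear, so Lemma~\ref{le:sign} and Lemma~\ref{le:fiber} suffice---and then invoking $(ii)\Rightarrow(iii)$. Since the equality-only part of your $(i)\Rightarrow(iii)$ \emph{is} the paper's $(i)\Rightarrow(ii)$, and you prove $(ii)\Rightarrow(iii)$ separately, your argument is easily repaired by restructuring: drop the direct treatment of strict $y$-inequalities and route through $(ii)$.

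\textbf{The basis-independence in $(ii)\Rightarrow(i)$.} Invoking Lemma~\ref{le:tangent} to identify the matroid of $\f_{i,j,b}$ with a contraction of the tangent matroid of ${\bf F}$ at a generic smooth point is a detour with a gap: a point that is generic in the slice ${\bf F}\cap\{y_i=b\}$ need not be generic in ${\bf F}$, so the tangent matroids need not match without further argument. The paper argues more directly: a subset $H$ (with $|H|=n-1$) is a basis for $\f_{i,j,b}$ exactly when the projection $\rho_H$ is injective on ${\bf F}\cap\{y_i=b\}$, and quasi-affinity of $\f$ ties this to whether $H\cup\{y_i\}$ is a basis for $\f$---a condition manifestly independent of $b$. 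Your ``direct dimension-count argument'' comparing bases of $\f$ and $\f_{i,j,b}$ is in fact exactly what is needed here, and it supersedes the appeal to Lemma~\ref{le:tangent}.
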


\begin{proof}
We first prove that (i) implies (ii).
Suppose that $\f$ is monotone.
Consider an affine coordinate subspace $S= \{ x_{j_1}=c_1, \ldots ,x_{j_\alpha}=c_\alpha,
y_{i_1}=b_1, \ldots ,y_{i_\beta}=b_\beta \}$ in $\Real^{n+k}$.
Lemma~\ref{le:sign} implies that the intersection
${\bf F} \cap \{ x_{j_1}=c_1, \ldots ,x_{j_\alpha}=c_\alpha \}$ is the graph of a monotone
map ${\bf g}$, and hence connected.
Applying Lemma~\ref{le:fiber} to ${\bf g}$, we conclude that ${\bf F} \cap S$ is also the graph of
a monotone map, and therefore connected.

The part (ii) implies the part (iii) by Lemma~\ref{le:connected} and
a straightforward induction on the number of strict inequalities defining the coordinate cone.

Now we prove that (iii) implies (i).
Suppose the condition (iii) is satisfied.
Let $C'$ be a coordinate cone in ${\rm span} \{ x_1, \ldots ,x_n \}$.
Then the intersection ${\bf F} \cap (C' \times \Real^k)$ is connected, hence the image of its
projection, $C' \cap X$, is connected.
It follows that $X$ is semi-monotone.

We prove that $\f$ is a monotone map by induction on $n$, the base for $n=1$ being trivial.
Choose any $i$ and $j$ such that $f_i$ is non-constant in $x_j$, and consider the map ${\bf f}_{i,j,b}$
for some $b \in \Real$.
This map and its graph ${\bf F} \cap \{y_i=b \}$ inherit from $\f$ and ${\bf F}$ properties
(i) and (ii) in the conditions of the theorem.
Thus, by the inductive hypothesis, ${\bf f}_{i,j,b}$ is a monotone map.
It remains to prove that the system of basis sets associated with ${\bf f}_{i,j,b}$ does not depend
on $b \in \Real$.
Let $T={\rm span} \{ x_1, \ldots , x_{j-1},x_{j+1}, \ldots ,x_n \}$.
Any basis set of ${\bf f}_{i,j,b}$ is a subset
$$\{ x_{j_1}, \ldots , x_{j_\alpha}, y_{i_1}, \ldots ,y_{i_\beta}\} \subset
\{ x_1, \ldots ,x_n,y_1, \ldots , y_{i-1}, y_{i+1},\ldots , y_k \},$$
where $\alpha+ \beta=n-1$, such that the map
$$(x_{j_1}, \ldots , x_{j_\alpha}, f_{i_1}, \ldots ,f_{i_\beta}):\> \rho_T({\bf F}
\cap \{ y_i=b \}) \to \Real^{n-1}$$
is injective.
Since $\f$ is quasi-affine, the injectivity does not depend on the choice of $b$.
\end{proof}

\begin{corollary}
Let $\f:\> X \to \Real^k$ be a monotone map having the graph ${\bf F} \subset \Real^{n+k}$.
Then for every $z \in \{ x_1, \ldots ,x_n,y_1, \ldots ,y_k \}$ and every $c \in \Real$
each of the intersections ${\bf F} \cap \{ z\> \sigma\> c \}$, where $\sigma \in \{ <,>,= \}$,
is either empty or the graph of a monotone map.
\end{corollary}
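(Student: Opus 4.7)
The plan is to split the argument according to whether $\sigma$ is equality or strict inequality, and then for strict inequalities, to verify the hypotheses of Theorem~\ref{th:def_monotone_map} and invoke its equivalence (iii)$\Rightarrow$(i).

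For $\sigma$ equal to ``$=$'': if $z = y_i$, the conclusion is the case $\beta = 1$, $i_1 = i$, $b_1 = c$ of Lemma~\ref{le:fiber}. If $z = x_j$, then $\{x_j\}$ is an independent set of the matroid $\mathbf{m}$ associated with $\mathbf{f}$, since it is a subset of the basis $\{x_1, \ldots, x_n\}$ which lies in every system of basis sets. Hence Theorem~\ref{th:matroid}(ii) applied with $I = \{x_j\}$ gives that each non-empty intersection $\mathbf{F} \cap \{x_j = c\}$ is the graph of a monotone map.

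For $\sigma \in \{<, >\}$, set $\mathbf{F}' := \mathbf{F} \cap \{z\> \sigma\> c\}$ and assume it is non-empty. I would first observe that $\mathbf{F}'$ is the graph of a bounded continuous map on an open bounded subset of $\Real^n$: if $z = x_j$, the domain is $X \cap \{x_j\> \sigma\> c\}$, openly cut out of $X$; if $z = y_i$, the domain is $\{\mathbf{x} \in X \mid f_i(\mathbf{x})\> \sigma\> c\}$, which is open by continuity of $f_i$. Next I would verify that this restricted map is quasi-affine: if $\rho_T(\mathbf{F}')$ is $n$-dimensional for some coordinate subspace $T$ of dimension $n$, then $\rho_T(\mathbf{F})$ is also $n$-dimensional (since $\mathbf{F}' \subset \mathbf{F}$ and both are at most $n$-dimensional), so by Theorem~\ref{th:vdd} the projection $\rho_T$ is injective on $\mathbf{F}$, and hence on $\mathbf{F}'$. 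Finally I would check condition (iii) of Theorem~\ref{th:def_monotone_map}: for every coordinate cone $C \subset \Real^{n+k}$, the intersection $\mathbf{F}' \cap C = \mathbf{F} \cap (\{z\> \sigma\> c\} \cap C)$ is connected because $\{z\> \sigma\> c\} \cap C$ is itself a coordinate cone and Theorem~\ref{th:def_monotone_map}(iii) applies to $\mathbf{f}$ itself. Applying the direction (iii)$\Rightarrow$(i) of Theorem~\ref{th:def_monotone_map} to $\mathbf{F}'$ then yields that $\mathbf{F}'$ is the graph of a monotone map.

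The main conceptual obstacle is the case $z = y_i$ with $\sigma \in \{<, >\}$, where the domain of the restricted map is no longer just a coordinate-axis-aligned slab of $X$ but the semi-monotone sublevel (or superlevel) set of a component function. The openness and semi-monotonicity of this domain come for free from the continuity and sub/super-monotonicity of $f_i$ (Corollary~\ref{cor:monot-1}), so the heavy lifting is absorbed into the already-established Theorems~\ref{th:vdd} and~\ref{th:def_monotone_map}; once the quasi-affinity and the coordinate-cone connectedness have been transferred from $\mathbf{F}$ to $\mathbf{F}'$, the corollary follows without further work.
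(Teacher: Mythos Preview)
Your proof is correct and follows essentially the same route as the paper's, via Theorem~\ref{th:def_monotone_map}. The paper's version is more compressed: it treats all three signs $\sigma$ uniformly (noting that $\{z\,\sigma\,c\}\cap S$ is a coordinate cone for any affine coordinate subspace $S$, so condition~(ii) holds for $\mathbf{F}\cap\{z\,\sigma\,c\}$, and then invoking (ii)$\Rightarrow$(i)) and leaves the quasi-affine hypothesis implicit, whereas you split off the equality case via Lemma~\ref{le:fiber} and Theorem~\ref{th:matroid}(ii) and spell out the quasi-affineness check explicitly.
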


\begin{proof}
Let ${\bf F} \cap \{ z\> \sigma\> c \} \neq \emptyset$.
According to part (iii) of Theorem~\ref{th:def_monotone_map}, for any affine coordinate subspace
$S \subset \Real^{n+k}$, the intersection ${\bf F} \cap \{ z\> \sigma\> c \} \cap S$ is connected,
since $\{ z\> \sigma\> c \} \cap S$ is a coordinate cone.
Then part (ii) of this theorem implies that ${\bf F} \cap \{ z\> \sigma\> c \}$ is
the graph of a monotone map.
\end{proof}

The following corollary is a generalization of Definition~\ref{def:monotone}
from monotone functions to monotone maps.

\begin{corollary}\label{cor:map-function}
Let a bounded continuous map $\f=(f_1, \ldots ,f_k)$ on a non-empty semi-monotone set
$X \subset \Real^n$ have the graph ${\bf F} \subset \Real^{n+k}$.
The map $\f$ is monotone if and only if
\begin{enumerate}
\item[(i)]
it is quasi-affine;
\item[(ii)]
for every subset $\{ i_1, \ldots ,i_m \} \subset \{ 1, \ldots ,k \}$ the intersection
$$S_{i_1, \ldots ,i_m}:= \bigcap_{1 \le \ell \le m}\{ f_{i_\ell} \sigma_\ell 0 \} \subset X,$$
where $\sigma _\ell \in \{ <,> \}$, is semi-monotone.
\end{enumerate}
\end{corollary}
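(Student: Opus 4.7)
The plan is to use Theorem~\ref{th:def_monotone_map} as a bridge: it already characterizes monotonicity of a bounded continuous quasi-affine map $\f$ by the connectedness of intersections $\mathbf{F}\cap C$ with coordinate cones $C\subset\Real^{n+k}$, so both directions of this corollary reduce to a translation between intersections in $\Real^{n+k}$ and subsets of $\Real^n$ defined by sign conditions on the $f_i$'s.

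For the forward direction, quasi-affinity~(i) is exactly Theorem~\ref{th:vdd}. For~(ii), the graph map $\pi:X\to\mathbf{F}$, $\x\mapsto(\x,\f(\x))$, is a homeomorphism. With $C_y:=\bigcap_\ell\{y_{i_\ell}\sigma_\ell 0\}\subset\Real^k$, one has $S_{i_1,\ldots,i_m}=\pi^{-1}(\mathbf{F}\cap(\Real^n\times C_y))$, and for any coordinate cone $C'\subset\Real^n$, $S_{i_1,\ldots,i_m}\cap C'=\pi^{-1}(\mathbf{F}\cap(C'\times C_y))$. Since $C'\times C_y$ is a coordinate cone in $\Real^{n+k}$, the right-hand side is connected by Theorem~\ref{th:def_monotone_map}(iii), so $S_{i_1,\ldots,i_m}\cap C'$ is connected. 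Openness and boundedness of $S_{i_1,\ldots,i_m}$ follow from continuity of $\f$ and boundedness of $X$.

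For the backward direction, assuming (i) and (ii), I would verify Theorem~\ref{th:def_monotone_map}(iii). Given a coordinate cone $C\subset\Real^{n+k}$, split it as $C=C_x\cap C_y^{\mathrm{str}}\cap C_y^{=}$, with $C_x$ a coordinate cone in the $x$-coordinates, $C_y^{\mathrm{str}}$ collecting the strict $y$-inequalities, and $C_y^{=}$ the equalities $y_i=d_i$ for $i$ in some index set $E$. When $E=\emptyset$, $\mathbf{F}\cap C$ is the graph of $\f$ over $X\cap C_x\cap\f^{-1}(C_y^{\mathrm{str}})$; by~(ii), $X\cap\f^{-1}(C_y^{\mathrm{str}})$ is semi-monotone, and Proposition~\ref{prop:proj} preserves this property under intersection with $C_x$, yielding a connected domain and hence, by continuity of $\f$, a connected graph.

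The main obstacle will be handling the equalities in $C_y^{=}$: level sets of $f_i$ are not open in $X$, so (ii) does not apply directly. I would induct on $|E|$. In the base case $|E|=1$ with constraint $y_i=d_i$, quasi-affinity (Remark~\ref{re:quasi-affine}) forces $f_i$ to be strictly monotone in some variable $x_j$, so an analogue of Lemma~\ref{le:Usemi-monotone} realizes $\mathbf{F}\cap\{y_i=d_i\}$ as the graph of a continuous bounded map over a semi-monotone domain obtained by exchanging $x_j$ and $y_i$. Applying (ii) with $m=1$ for both signs, together with a Lemma~\ref{le:divide}-type separation argument inside the semi-monotone set $X$, should give connectedness of this level set; the inductive step then iterates on the reduced instance, with the restricted $\f$ inheriting both quasi-affinity and an analogue of~(ii) on the smaller semi-monotone slice. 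Combining the two cases gives $\mathbf{F}\cap C$ connected for every coordinate cone $C$, and Theorem~\ref{th:def_monotone_map} then certifies that $\f$ is monotone.
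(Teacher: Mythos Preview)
Your forward direction matches the paper's argument essentially verbatim.

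For the converse the paper takes a much shorter route than you do. It picks an \emph{arbitrary} coordinate cone $K\subset\Real^{n+k}$ and observes in one line that the projection of $\mathbf{F}\cap K$ to $\mathrm{span}\{x_1,\ldots,x_n\}$ is of the form $S_{i_1,\ldots,i_m}\cap C$ for some coordinate cone $C$ in $\Real^n$; by~(ii) this is a semi-monotone set intersected with a coordinate cone, hence connected, and since $\mathbf{F}$ is the graph of a continuous map over $X$, the preimage $\mathbf{F}\cap K$ is connected as well. This verifies condition~(iii) of Theorem~\ref{th:def_monotone_map} in one stroke, with no case split on whether $K$ carries $y$-equalities and no induction on $|E|$.

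Your induction on $|E|$ is therefore extra work compared to the paper, and the inductive step has a genuine gap. In the case $|E|\geq 1$ you invoke ``an analogue of Lemma~\ref{le:Usemi-monotone}'' to obtain a semi-monotone domain after exchanging $x_j$ and $y_i$, and ``a Lemma~\ref{le:divide}-type separation argument'' to control the level set. Both lemmas are stated and proved for \emph{monotone} functions: Lemma~\ref{le:Usemi-monotone} uses that $\inf_{x_j}f_i$ is submonotone and $\sup_{x_j}f_i$ is supermonotone, and Lemma~\ref{le:divide} requires the separating hypersurface $\Sigma$ to be the graph of a monotone function. At this point you only know that $\f$ is quasi-affine and satisfies~(ii); monotonicity of the components is precisely the conclusion you are after, so appealing to those lemmas here is circular. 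You would also owe a verification that the restricted map on the slice $\{y_i=d_i\}$ inherits both quasi-affinity and an analogue of~(ii), which you assert but do not argue. The paper avoids all of this by never isolating the equality constraints in~$K$.
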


\begin{proof}
Suppose that the map $\f$ is monotone.
Then, by Theorem~\ref{th:def_monotone_map}, the condition (i) is satisfied.
Let $C$ be a coordinate cone in ${\rm span}\{ x_1, \ldots , x_n \}$.
The intersection $S_{i_1, \ldots ,i_m} \cap C$ is the projection on
${\rm span} \{ x_1, \ldots , x_n \}$ of the intersection of ${\bf F}$ with the coordinate cone
$$K:= (C \times \Real^k) \cap \bigcap_{1 \le \ell \le m}\{ y_{i_\ell} \sigma_\ell 0 \}$$
in ${\rm span} \{ x_1, \ldots , x_n,y_1, \ldots ,y_k \}$.
By item (ii) in Theorem~\ref{th:def_monotone_map}, this intersection is connected, hence
the projection is also connected.
It follows that $S_{i_1, \ldots ,i_m}$ is semi-monotone.

Conversely, suppose that the conditions (i) and (ii) of the theorem are satisfied, and let $K$
be a coordinate cone in ${\rm span} \{ x_1, \ldots , x_n,y_1, \ldots ,y_k \}$.
The projection of ${\bf F} \cap K$ on ${\rm span} \{ x_1, \ldots , x_n \}$ is of the kind
$S_{i_1, \ldots ,i_m} \cap C$ for some $\{ i_1, \ldots ,i_m \} \subset \{ 1, \ldots ,k \}$
and a coordinate cone  $C$ in ${\rm span} \{ x_1, \ldots , x_n \}$.
Since $S_{i_1, \ldots ,i_m}$ is semi-monotone, its intersection with $C$ is connected.
Because ${\bf F}$ is the graph of a continuous map, the intersection ${\bf F} \cap K$ is also
connected, hence, by Theorem~\ref{th:def_monotone_map}, the map $\f$ is monotone.
\end{proof}

The following theorem is another corollary to Theorem~\ref{th:def_monotone_map}

\begin{theorem}\label{th:proj_graph}
Let $\f:\> X \to \Real^k$ be a monotone map defined on a semi-monotone set $X \subset \Real^n$,
with the associated matroid ${\bf m}$, and graph ${\bf F} \subset \Real^{n+k}$.
Then for any subset $I \subset \{ x_1, \ldots , x_n,y_1, \ldots ,y_k \}$, with
$T:= {\rm span}\> I$, the image $\rho_T ({\bf F})$ under the projection map $\rho_T:\> {\bf F} \to T$
is either a semi-monotone set or the graph of a monotone map, whose matroid is a minor of ${\bf m}$.
\end{theorem}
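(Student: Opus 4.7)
The plan is to dichotomise according to whether $I$ is independent in the matroid $\bf m$ associated with $\f$. Set $d := \dim \rho_T({\bf F})$ and $m := |I|$; by Corollary~\ref{cor:independent}, $d$ equals the rank of $I$ in $\bf m$, so $d \le m$ with equality exactly when $I$ is independent. If $d = m$, matroid augmentation extends $I$ to a basis $H$ of $\bf m$; Theorem~\ref{th:exchange} says $\rho_{{\rm span}\,H}({\bf F})$ is semi-monotone, and since $T$ is a coordinate subspace of ${\rm span}\,H$, Proposition~\ref{prop:proj} shows that $\rho_T({\bf F}) = \rho_T(\rho_{{\rm span}\,H}({\bf F}))$ is semi-monotone.

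Now assume $d < m$. Pick a maximal independent subset $I_0 \subset I$ (so $|I_0| = d$), let $J := I \setminus I_0$ and $T_0 := {\rm span}\,I_0$, and extend $I_0$ to a basis $H$ of $\bf m$; maximality of $I_0$ in $I$ forces $J \cap H = \emptyset$. By Theorem~\ref{th:exchange}, $\bf F$ is the graph of a monotone map $\tilde{\f}$ on the semi-monotone set $\rho_{{\rm span}\,H}({\bf F})$, and for each $z \in J$ let $g_z$ denote the component of $\tilde{\f}$ recording the $z$-coordinate. The crux is to show that each $g_z$ factors through $\rho_{T_0}$. Since $I_0 \cup \{z\}$ has rank $|I_0|$ in $\bf m$ (as $I_0$ is maximally independent in $I$), Corollary~\ref{cor:independent} gives $\dim \rho_{{\rm span}(I_0 \cup \{z\})}({\bf F}) = |I_0|$, while this image is parameterised as $\{(x_{I_0}, g_z(x_H))\}$. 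By Corollary~\ref{cor:monot-1}, $g_z$ is a monotone function, hence strictly monotone in or independent of each variable in $H$; if it were strictly monotone in some $w \in H \setminus I_0$, then varying $w$ along an open segment of the open set $\rho_{{\rm span}\,H}({\bf F})$ at fixed values of the remaining $H$-coordinates would produce an open interval of $g_z$-values at constant $x_{I_0}$, forcing the image to have dimension $|I_0|+1$, a contradiction. Hence $g_z = \tilde g_z \circ \rho_{T_0}$ for a continuous $\tilde g_z: \rho_{T_0}({\bf F}) \to \Real$, so $\rho_T({\bf F})$ is the graph of $\tilde{\bf g} := (\tilde g_z)_{z \in J}$ over $\rho_{T_0}({\bf F})$, which is semi-monotone by Case~1 applied to $I_0$.

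To certify $\tilde{\bf g}$ as monotone, I verify the hypotheses of Theorem~\ref{th:def_monotone_map}: for any affine coordinate subspace $S \subset T$, $\rho_T({\bf F}) \cap S = \rho_T({\bf F} \cap \rho_T^{-1}(S))$ with $\rho_T^{-1}(S)$ itself an affine coordinate subspace of $\Real^{n+k}$, so connectedness of ${\bf F} \cap \rho_T^{-1}(S)$ from Theorem~\ref{th:def_monotone_map} applied to $\f$ passes to $\rho_T({\bf F}) \cap S$; quasi-affinity of $\tilde{\bf g}$ follows from Corollary~\ref{cor:independent}, with injectivity of $\rho_{{\rm span}\,I'}$ on $\rho_T({\bf F})$ for every basis $I'$ of $\bf m|_I$ obtained by rerunning the key-claim argument with $I'$ in place of $I_0$. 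The resulting matroid of $\tilde{\bf g}$ has ground set $I$ and bases equal to the bases of $\bf m|_I$, so it is the restriction of $\bf m$ to $I$, a minor. I expect the main obstacle to be the dimension-count in the key claim (which genuinely uses the rigidity of monotone functions of being either strictly monotone or independent in each variable); the rest is a direct assembly of previously established results.
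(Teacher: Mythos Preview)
Your proof is correct and follows the same overall architecture as the paper's: dichotomise on whether $d=|I|$, handle the full-rank case via augmentation and Theorem~\ref{th:exchange}/Proposition~\ref{prop:proj}, and in the deficient case exhibit $\rho_T({\bf F})$ as a graph over a maximal independent $I_0\subset I$, then certify monotonicity through Theorem~\ref{th:def_monotone_map} by pushing connectedness down along $\rho_T$.

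The one genuine difference is in the key step showing that $\rho_T({\bf F})$ is a graph over ${\rm span}\,I_0$. The paper invokes Theorem~\ref{th:matroid}~(ii): all fibres of $\rho_{{\rm span}\,I_0}$ on ${\bf F}$ are graphs of monotone maps with the \emph{same} matroid, so if the generic fibre projects to a point in $T$ then every fibre does. You instead pass via Theorem~\ref{th:exchange} to a basis $H\supset I_0$, use Corollary~\ref{cor:monot-1} to see that each residual component $g_z$ is a monotone \emph{function}, and rule out strict monotonicity of $g_z$ in any $w\in H\setminus I_0$ by a direct dimension count against Corollary~\ref{cor:independent}. This is a legitimate and somewhat more elementary route: it trades the fibre-matroid machinery of Theorem~\ref{th:matroid}~(ii) for the simpler trichotomy of monotone functions. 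Your identification of the resulting matroid as the restriction ${\bf m}|_I$ is also cleaner than the paper's discussion (which distinguishes an injective sub-case yielding a restriction from a general sub-case yielding a contraction); in fact your argument shows directly that the bases of the matroid of $\tilde{\bf g}$ are exactly the maximal ${\bf m}$-independent subsets of $I$, so it is always ${\bf m}|_I$, which is of course a minor.
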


\begin{proof}
By Theorem~\ref{th:def_monotone_map}, it is sufficient to prove that ${\bf G}:=\rho_T ({\bf F})$ is
either a semi-monotone set or the
graph of a quasi-affine map, and that for each affine coordinate subspace $S$ in ${\rm span}\> I$
the intersection $\rho_T ({\bf F}) \cap S$ is connected.

Let $\dim {\bf G}=m$ and assume first that $m< \dim T$.

Let $H$ be a subset of $I$ such that $|H|=m$ and $\dim (\lambda_L ({\bf G}))=m$,
where $L:= {\rm span}\> H$, and $\lambda_L:\> {\bf G} \to L$ is the projection map
(obviously, there is such a subspace).
By Corollary~\ref{cor:independent},
there exists a subset $J \subset \{ x_1, \ldots , x_n,y_1, \ldots ,y_k \}$ such that
$J \cap I=H$, $|J|=n$, and $\dim (\rho_N ({\bf F}))=n$, where $N:= {\rm span}\> J$.

Since $\f$ is quasi-affine, the projection map $\rho_N:\> {\bf F} \to N$ is injective.
We now prove that the projection map $\lambda_L$ is also injective.
Because $\rho_N$ is injective, the set $J$ is a basis of ${\bf m}$, and therefore its subset
$H$ is an independent set of ${\bf m}$.
According to Theorem~\ref{th:matroid}, all non-empty fibers of $\rho_L:\> {\bf F} \to L$ are
graphs of monotone maps, having the same matroids of rank $n-m$.
Since $\dim {\bf G}=m$, the image of the projection to $T$ of a generic fiber of $\rho_L$
(hence every fiber of $\rho_L$, since all matroids are the same) to $T$ is 0-dimensional, thus
it is a single point.
We conclude that $\lambda_L$ is injective, hence ${\bf G}$ is the graph of a map,
defined on $\lambda_L ({\bf G})$, and that this map is quasi-affine.
Denote this map by ${\bf g}$.

Let $S$ be an affine coordinate subspace in $T$.
Since $\f$ is monotone, the intersection ${\bf F} \cap (S \times ({\rm span}\> (J \setminus H)))$
is connected.
It follows that ${\bf G} \cap S= \rho_L ({\bf F} \cap (S \times ({\rm span}\> (J \setminus H)))$
is connected, hence, by Theorem~\ref{th:def_monotone_map}, ${\bf g}$ is a monotone map.

Let ${\bf m}_I$ be the matroid associated with ${\bf g}$.
Observe that if the projection map $\rho_T$ is injective, then the family of all independent sets
of ${\bf m}_I$ consists of subsets $H \subset I$ which are independent sets of ${\bf m}$.
It follows that ${\bf m}_I$ is a {\em restriction} of the matroid $\bf m$ to $I$
(\cite{Welsh}, Ch.~4, Section~2).
In fact, ${\bf m}_I$ includes some basis sets of $\bf m$.
In the case of a general projection map, the family of all independent sets of $\bf m$ consists
of subsets $H \subset I$ which can be appended by maximal independent subsets of $J \setminus H$
so that to become independent sets of $\bf m$.
Thus, ${\bf m}_I$ is a {\em contraction} of $\bf m$ (\cite{Welsh}, Ch.~4 Section~3).
In fact, for each basis set of ${\bf m}_I$ there is a subset of $J \setminus H$ such that
their union is a basis of $\bf m$.
It follows that ${\bf m}_I$ is a {\em minor} of $\bf m$.

Now let $\dim {\bf G}=m= \dim T$.
By Corollary~\ref{cor:independent}, $I$ is an independent set of the matroid $\bf m$,
hence, by the matroid theory's Augmentation Theorem (\cite{Welsh}, Ch.~1, Section~5), there is a basis
$J$ of $\bf m$, containing $I$.
The image of the projection of $\bf F$ to ${\rm span}\> J$ is a semi-monotone set, according to
Theorems~\ref{th:vdd} and \ref{th:exchange}.
Then, by Proposition~\ref{prop:proj}, $\bf G$ is also a semi-monotone set.
\end{proof}

\begin{theorem}\label{th:schonflies_for_monotone}
Let $\bf F$ be the graph of a monotone map $\f:\> X \to \Real^k$ on a semi-monotone set
$X \subset \Real^n$.
Let ${\bf G} \subset {\bf F}$ be the graph of a monotone map ${\bf g}$ such that
$\dim {\bf G} =n-1$, and $\partial {\bf G} \subset \partial{\bf F}$.
Then ${\bf F} \setminus {\bf G}$ is a disjoint union of two graphs of some monotone maps.
\end{theorem}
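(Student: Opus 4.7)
The plan is to mimic the structure of Lemma~\ref{le:divide} (which is the case $k=0$), using Theorem~\ref{th:exchange} as a dictionary between bases of the matroid ${\bf m}$ associated with $\f$ in order to reduce to Lemma~\ref{le:divide} after a projection. The proof proceeds in three stages: first, fix a convenient basis of ${\bf m}$; second, project, apply Lemma~\ref{le:divide}, and lift back; third, verify that each of the two pieces so obtained is itself the graph of a monotone map.

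For the setup, let ${\bf n}$ denote the matroid of ${\bf g}$, let $H_G$ be a basis of ${\bf n}$, and put $T_G := {\rm span}(H_G)$. Since ${\bf G} \subset {\bf F}$ forces $\dim \rho_{T_G}({\bf F}) = n-1$, Corollary~\ref{cor:independent} shows $H_G$ is independent in ${\bf m}$; by the matroid augmentation theorem, extend $H_G$ to a basis $H = H_G \cup \{e\}$ of ${\bf m}$, and set $T := {\rm span}(H)$. By Theorem~\ref{th:exchange} applied first to $\f$ and then to ${\bf g}$, $X' := \rho_T({\bf F})$ is semi-monotone in $T \cong \Real^n$ with ${\bf F}$ the graph of a monotone map $\f'$ on $X'$, while $\rho_T({\bf G})$ is the graph of the single monotone function $g'_e$ (the $e$-component of the monotone map on $Y'$ produced by Theorem~\ref{th:exchange} applied to ${\bf g}$) on the semi-monotone set $Y' := \rho_{T_G}({\bf G}) \subset T_G$. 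By Theorem~\ref{th:vdd} the projection $\rho_T$ is a homeomorphism from ${\bf F}$ onto $X'$. Since $\partial {\bf G} \subset \partial {\bf F}$ makes ${\bf G}$ closed in ${\bf F}$, its image $\rho_T({\bf G})$ is closed in $X'$, yielding $\partial \rho_T({\bf G}) \subset \partial X'$. Lemma~\ref{le:divide} then produces $X' \setminus \rho_T({\bf G}) = X'_+ \sqcup X'_-$ as a union of two semi-monotone sets, and their preimages in ${\bf F}$ under $\rho_T$ yield two disjoint pieces ${\bf F}_+, {\bf F}_-$ partitioning ${\bf F} \setminus {\bf G}$.

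It remains to show that each ${\bf F}_\pm$ is the graph of a monotone map, i.e., that the restriction $\f'|_{X'_\pm}$ is monotone. Quasi-affineness descends from $\f'$ since $X'_\pm$ is open in $X'$ (Lemma~\ref{le:tangent}), so by Theorem~\ref{th:def_monotone_map} it suffices to verify that ${\bf F}_\pm \cap C$ is connected for every coordinate cone $C$ in $\Real^{n+k}$. I would prove this by induction on $n$, with trivial base $n=1$. For the inductive step I slice by $\{z=c\}$ for some variable $z$: Lemma~\ref{le:sign} ensures ${\bf F} \cap \{z=c\}$ is itself the graph of a monotone map of dimension $n-1$, while Theorem~\ref{th:matroid}(ii) together with Remark~\ref{rem:indep_j} ensures that the slice ${\bf G} \cap \{z=c\}$ behaves as a codimension-one monotone-map graph uniformly in $c$. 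The inductive hypothesis splits each slice into two monotone-map subgraphs lying in ${\bf F}_\pm$, and Corollary~\ref{cor:def2} together with Theorem~\ref{th:def_monotone_map} reassembles the global connectedness of ${\bf F}_\pm \cap C$. The main obstacle I anticipate lies precisely in this last step, namely the bookkeeping required when ${\bf G} \cap \{z=c\}$ fails to have codimension one in ${\bf F} \cap \{z=c\}$ (being empty, lower-dimensional, or coincident with the whole slice); these degenerate cases must be handled by choosing an alternative slicing direction in which ${\bf g}$'s matroid supplies a suitable basis, exactly analogously to the case analysis at the end of the proof of Lemma~\ref{le:divide}.
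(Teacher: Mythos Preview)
Your overall strategy matches the paper's: project to an $n$-dimensional coordinate space, apply Lemma~\ref{le:divide} to split into two semi-monotone pieces, lift back, and verify monotonicity of the two restrictions by induction on $n$. The execution, however, differs in two places, and in both the paper takes a shorter path.

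First, for the projection step you build a basis $H$ of ${\bf m}$ by augmenting a basis of the matroid of ${\bf g}$, so that $\rho_T({\bf G})$ becomes a monotone-function graph by construction. The paper simply takes $T={\rm span}\{x_1,\ldots,x_n\}$ and invokes Theorem~\ref{th:proj_graph}: since $\dim{\bf G}=n-1<n=\dim T$, the ``semi-monotone'' alternative in that theorem is excluded, so $\rho_T({\bf G})$ is automatically the graph of a monotone function in $\Real^n$. No matroid augmentation is needed.

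Second, for the verification that $\f_+,\f_-$ are monotone you aim at the coordinate-cone characterization (Theorem~\ref{th:def_monotone_map}), slicing by an arbitrary hyperplane $\{z=c\}$; this is what generates the ``bookkeeping'' you anticipate, because $z$ may be a loop in one or both matroids. The paper instead verifies Definition~\ref{def:monot_map} directly, which requires slicing only by $\{y_i=b\}$ for those $i$ with $f_i$ non-constant in some $x_j$. The resulting case analysis is short: if ${\bf G}\subset\{y_i=b\}$ then ${\bf G}={\bf F}\cap\{y_i=b\}$ and both ${\bf F}_\pm\cap\{y_i=b\}$ are empty; if ${\bf G}\cap\{y_i=b\}=\emptyset$ one piece inherits the whole slice; otherwise both pieces meet the slice (an interior extremum of the monotone component $g_i$ being impossible) and the inductive hypothesis applies. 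Condition~(ii) of Definition~\ref{def:monot_map} then follows for free, because quasi-affineness of $\f$ (Theorem~\ref{th:vdd}) forces the basis system of $\f_{i,j,b}$ to coincide with that of each $\f_{\pm\ i,j,b}$. Your route would work, but the paper's choice of characterization trims exactly the degenerate cases you were worried about.
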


\begin{proof}
Let $T:= {\rm span} \{ x_1, \ldots ,x_n \}$.
According to Theorem~\ref{th:proj_graph}, $\rho_T({\bf G})$ is a graph of a monotone function,
hence, by Lemma~\ref{le:divide}, $X \setminus \rho_T({\bf G})$ has two connected components,
each of which is a semi-monotone set.
Their pre-images in $\bf F$ are the two connected components of ${\bf F} \setminus {\bf G}$,
we denote them by ${\bf F}_+$ and ${\bf F}_-$, which are graphs of some continuous maps,
$\f_+$ and $\f_-$.
We prove that $\f_+$ and $\f_-$ are monotone maps by checking that they satisfy
Definition~\ref{def:monot_map}.

Suppose that there exist $i \in \{ 1, \ldots ,k \}$ and $j \in \{ 1, \ldots ,n \}$ such that
the component $f_i$ of $\f$ is not independent of a coordinate $x_j$,
otherwise $\f$ is identically constant on $X$, and the theorem becomes trivially true.
Fix one such pair $i,j$.

The intersection ${\bf F} \cap \{ y_i=b \}$ is either empty or the graph of a monotone map,
due to Definition~\ref{def:monot_map}.
If ${\bf G} \subset {\bf F} \cap \{ y_i=b \}$, then ${\bf G} = {\bf F} \cap \{ y_i=b \}$, and hence
$({\bf F}_+ \cup {\bf F}_-) \cap \{ y_i=b \}=\emptyset$.
We now prove, by induction on $n$, that if ${\bf F} \cap \{ y_i=b \} \neq \emptyset$ and
${\bf G} \not\subset {\bf F} \cap \{ y_i=b \}$, then each of intersections ${\bf F}_+ \cap \{ y_i=b \}$
and ${\bf F}_- \cap \{ y_i=b \}$ is either empty or the graph of a monotone map.
The base of the induction, for $n=1$, is trivial.

If ${\bf G} \cap \{ y_i=b \} =\emptyset$, then either
${\bf F}_+ \cap \{ y_i=b \}={\bf F} \cap \{ y_i=b \}$ or
${\bf F}_- \cap \{ y_i=b \}={\bf F} \cap \{ y_i=b \}$.
In any case, one of the two intersections is the graph of a monotone map and the other one is empty.
Assume now that ${\bf G} \cap \{ y_i=b \} \neq \emptyset$.
Then both intersections, ${\bf F}_+ \cap \{ y_i=b \}$ and ${\bf F}_- \cap \{ y_i=b \}$ are non-empty.
Indeed, if one of them is empty, then the component $g_i$ of ${\bf g}$
attains the global extremum $b$ at some point in its domain, which contradicts the monotonicity of $g_i$.

By the inductive hypothesis, $({\bf F} \cap \{y_i =b \}) \setminus ({\bf G} \cap \{y_i =b \})$
is a disjoint union of two graphs of some monotone maps.
One of its connected components lies in ${\bf F}_+$ while another in ${\bf F}_-$.
Hence, both intersections ${\bf F}_+ \cap \{ y_i=b \}$ and ${\bf F}_- \cap \{ y_i=b \}$
are graphs of monotone maps, for example, of the maps $\f_{+\ i,j,b}$ and $\f_{-\ i,j,b}$.
Thus, the part (i) of Definition~\ref{def:monot_map} is proved for $\f_+$ and $\f_-$.

Observe that the matroid associated with the monotone map $\f_{i,j,b}$ does not depend on
$b$ (by the part (ii) of Definition~\ref{def:monot_map}), and, since $\f_{i,j,b}$ is quasi-affine (by
Theorem~\ref{th:vdd}), coincides with systems of basis sets
of each of the maps $\f_{+\ i,j,b}$ and $\f_{-\ i,j,b}$.
It follows that the systems of basis sets $\f_{+\ i,j,b}$ and $\f_{-\ i,j,b}$ do not depend on $b$,
which proves the part (ii) of Definition~\ref{def:monot_map} for $\f_+$ and $\f_-$.

We conclude that the maps $\f_+$ and $\f_-$ are monotone.
\end{proof}

\begin{definition}\label{def:sign}
Let $g_1, \ldots ,g_m$ be continuous functions on a set $Y \subset \Real^n$.
A {\em sign condition in $g_1, \ldots ,g_m$ on $Y$} is any intersection of the kind
$Y \cap \bigcap_{1 \le i \le m} \{ g_i \sigma_i 0 \}$,
where $\sigma_i \in \{ <, =, > \}$.
\end{definition}

\begin{corollary}
Let $\bf F$ be the graph of a monotone map $\f:\> X \to \Real^k$ on a semi-monotone set
$X \subset \Real^n$.
Let $g_1, \ldots , g_m$ be continuous functions $\bf F:\> \to \Real$ such that
for any subset $\{ i_1, \ldots ,i_\ell \} \subset \{ 1, \ldots ,m \}$ and any
$j>1$, the intersection ${\bf F} \cap \{ g_{i_1}= \cdots =g_{i_j}=0 \}$ either coincides with
the intersection ${\bf F} \cap \{ g_{i_1}= \cdots =g_{i_{j-1}}=0 \}$ or is the graph of a monotone map
having codimension 1 in $\bf F$.
Then every sign condition in $g_1, \ldots , g_m$ on $\bf F$ is the graph of a monotone function.
\end{corollary}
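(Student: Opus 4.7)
The plan is to induct on the number $r$ of strict inequalities appearing in the sign condition; equalities will be handled directly by the stated hypothesis of the corollary, and strict inequalities will be peeled off one at a time using Theorem~\ref{th:schonflies_for_monotone}.

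For the base case $r=0$, I would enumerate the equality indices as $i_1,\ldots,i_q$ and form the chain ${\bf F}_s:={\bf F}\cap\{g_{i_1}=\cdots=g_{i_s}=0\}$. The corollary's hypothesis says that each ${\bf F}_s$ is either equal to ${\bf F}_{s-1}$ or is the graph of a monotone map of codimension one in $\bf F$; inductively, each ${\bf F}_s$ is a graph of a monotone map, and the sign condition is ${\bf F}_q$. For the inductive step, I would take a sign condition $Y$ with $r\ge 1$ strict inequalities, single out one of them $g_j\,\sigma_j\,0$ with $\sigma_j\in\{<,>\}$, and write $Y=Y'\cap\{g_j\,\sigma_j\,0\}$, where $Y'$ drops the condition on $g_j$. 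Then $Y'$ has $r-1$ strict inequalities, so by the inductive hypothesis $Y'$ is the graph of a monotone map $\bf G$. Similarly, $L:=Y'\cap\{g_j=0\}$ is a sign condition on $\bf F$ with $r-1$ strict inequalities (and one additional equality), so by the inductive hypothesis it is the graph of a monotone map (possibly empty, or equal to $\bf G$). When $L={\bf G}$ one has $Y=\emptyset$; when $L=\emptyset$ the continuous function $g_j$ has constant sign on the connected set $\bf G$, so $Y$ is either $\bf G$ or $\emptyset$; otherwise, $L$ is closed in $\bf G$ as the preimage of $0$ under the continuous restriction of $g_j$, so $\partial L\subset\partial {\bf G}$, and Theorem~\ref{th:schonflies_for_monotone} splits ${\bf G}\setminus L$ into two disjoint graphs of monotone maps, which by continuity of $g_j$ coincide with ${\bf G}\cap\{g_j>0\}$ and ${\bf G}\cap\{g_j<0\}$. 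Then $Y$ is whichever component matches $\sigma_j$.

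The hard part will be verifying in the nontrivial case of the inductive step that $L$ really has codimension one in $\bf G$, which is a standing hypothesis of Theorem~\ref{th:schonflies_for_monotone}. I plan to address this by tracking dimension through the peeling: each previous application of Theorem~\ref{th:schonflies_for_monotone} in the induction preserves the ambient dimension of the monotone-map graph (the two components produced share the dimension of the ambient graph), so the codimension-one gap between the equality cores of $L$ and $Y'$ inside $\bf F$ guaranteed by the corollary's hypothesis propagates faithfully through the chain of strict-inequality peelings to a codimension-one gap between $L$ and $\bf G$; the degenerate alternatives (equality cores coincide, or gap is larger) are precisely the $L={\bf G}$ and $L=\emptyset$ cases that were separately dispatched above.
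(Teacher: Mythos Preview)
Your proposal is correct and follows essentially the same route as the paper: first absorb the equalities using the stated hypothesis, then peel off strict inequalities one at a time via Theorem~\ref{th:schonflies_for_monotone}. The paper's proof is considerably terser (it simply asserts both steps without discussing the edge cases $L=\emptyset$, $L={\bf G}$, or the codimension check), but the underlying argument is the same.
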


\begin{proof}
Given a sign condition in $g_1, \ldots , g_m$ on $\bf F$, assume, without loss of generality,
that all equalities among $\sigma_i$ correspond to $i=1, \ldots ,r$, where $r \le m$.
Then, by the conditions of the lemma, ${\bf F} \cap \{ g_1= \cdots =g_r=0 \}$
is the graph of a monotone map.
Adding the strict inequalities $g_j \sigma_j 0$ for $j>r$ one by one we conclude, by
Theorem~\ref{th:schonflies_for_monotone}, that the set in $\bf F$, defined by equations and inequalities
on each step, is the graph of a monotone map.
\end{proof}

The following theorem is a version for monotone maps of Theorem~\ref{th:monot_funct}.

\begin{theorem}\label{th:monot_maps}
Let a bounded continuous quasi-affine map $\f=(f_1, \ldots ,f_k)$ on a non-empty semi-monotone set
$X \subset \Real^n$ have the graph ${\bf F} \subset \Real^{n+k}$.
Let
$$\{ x_{j_1}, \ldots , x_{j_\alpha}, y_{i_1}, \ldots ,y_{i_\beta}\} \subset
\{ x_1, \ldots ,x_n,y_1, \ldots , y_k \},$$
where $\alpha+ \beta=k$, and
$$T:= {\rm span} (\{ x_1, \ldots ,x_n,y_1, \ldots , y_k \} \setminus
\{ x_{j_1}, \ldots , x_{j_\alpha}, y_{i_1}, \ldots ,y_{i_\beta}\}).$$
Assume that $\rho_T({\bf F})$ is $n$-dimensional.
The map $\f$ is monotone if and only if
\begin{itemize}
\item[(i)]
The image $\rho_T({\bf F})$ is a semi-monotone set.
\item[(ii)]
For every $j \in \{ j_1, \ldots ,j_\alpha \}$ and every $i \in \{ i_1, \ldots ,i_\beta \}$,
such that the function $f_i$ is not independent of a variable form
$\{ x_1, \ldots ,x_n\} \setminus \{ x_{j_1}, \ldots , x_{j_\alpha}\}$,
all non-empty intersections ${\bf F} \cap \{ x_j=a \}$ and ${\bf F} \cap \{ y_i=a \}$,
where $a \in \Real$, are either empty or graphs of monotone maps.
\end{itemize}
\end{theorem}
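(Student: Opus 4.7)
The theorem generalizes Theorem~\ref{th:monot_funct} from functions to maps: the coordinates in $H := \{x_{j_1}, \ldots, x_{j_\alpha}, y_{i_1}, \ldots, y_{i_\beta}\}$ collectively play the role of the ``last coordinate,'' and the hypothesis $\dim\rho_T({\bf F})=n$ says exactly that the complement of $H$ in $\{x_1,\ldots,x_n,y_1,\ldots,y_k\}$ is a basis of the matroid associated with $\f$ (via Corollary~\ref{cor:independent}, since $\f$ is quasi-affine). The plan is to combine Theorem~\ref{th:exchange} in the forward direction with Theorem~\ref{th:def_monotone_map} in the reverse direction.

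For the forward direction, assume $\f$ is monotone. By Theorem~\ref{th:vdd} it is quasi-affine, and by the discussion above, the complement of $H$ is a basis of the matroid of $\f$. Theorem~\ref{th:exchange} then yields that $\rho_T({\bf F})$ is semi-monotone, proving~(i). Condition~(ii) follows at once from the Corollary after Theorem~\ref{th:def_monotone_map}, which states that the intersection of $\bf F$ with any coordinate hyperplane is empty or the graph of a monotone map.

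For the reverse direction, assume (i) and (ii). Since $\f$ is quasi-affine, by Theorem~\ref{th:def_monotone_map} it suffices to show that ${\bf F}\cap S$ is connected for every affine coordinate subspace $S=\bigcap_\ell\{z_\ell=c_\ell\}\subset\Real^{n+k}$. Split the defining equations by whether $z_\ell$ is a coordinate of $T$ or of $H$. If all $z_\ell$ are $T$-coordinates, then (i) together with Theorem~\ref{th:def2} makes the section of $\rho_T({\bf F})$ by the corresponding affine subspace of ${\rm span}(T)$ connected; since $\rho_T$ is injective on $\bf F$ (quasi-affinity plus $\dim\rho_T({\bf F})=n$), $\bf F$ is a continuous graph over $\rho_T({\bf F})$, so ${\bf F}\cap S$ is homeomorphic to that section and hence connected. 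Otherwise, pick some $z_{\ell^*}\in H$; by~(ii), ${\bf F}^*:={\bf F}\cap\{z_{\ell^*}=c_{\ell^*}\}$ is empty or the graph of a monotone map, and in the non-empty case Theorem~\ref{th:def_monotone_map}, applied to this monotone map and the remaining affine coordinate subspace $\bigcap_{\ell\ne\ell^*}\{z_\ell=c_\ell\}$, makes ${\bf F}\cap S={\bf F}^*\cap\bigcap_{\ell\ne\ell^*}\{z_\ell=c_\ell\}$ connected.

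The main obstacle is the degenerate branch of the second case where $z_{\ell^*}=y_i$ for some $i\in\{i_1,\ldots,i_\beta\}$ with $f_i$ independent of every free variable in $\{x_1,\ldots,x_n\}\setminus\{x_{j_1},\ldots,x_{j_\alpha}\}$: this situation is excluded from the hypothesis of~(ii), but here $f_i$ factors through the projection to ${\rm span}\{x_{j_1},\ldots,x_{j_\alpha}\}$, and one must argue separately that ${\bf F}\cap\{y_i=c_{\ell^*}\}$ reduces, after combining with the $T$-equations in $S$, to a sequence of sections by coordinate hyperplanes on some $x_{j_\ell}$ covered by~(ii). Verifying that this degenerate reduction preserves the hypotheses of Theorem~\ref{th:def_monotone_map} at every stage is the most delicate step.
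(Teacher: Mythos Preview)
Your forward direction is essentially the paper's: both invoke Theorem~\ref{th:exchange} for (i) and the slicing corollary of Theorem~\ref{th:def_monotone_map} for (ii).

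For the converse, however, the paper takes a different and more economical route. Rather than verifying condition (ii) of Theorem~\ref{th:def_monotone_map} for $\f$ by a case analysis on affine coordinate subspaces, the paper observes that, since $\f$ is quasi-affine and $\rho_T({\bf F})$ is $n$-dimensional and semi-monotone, ${\bf F}$ is the graph of a map
\[
{\bf g}:\ \rho_T({\bf F})\ \longrightarrow\ {\rm span}\{x_{j_1},\ldots,x_{j_\alpha},y_{i_1},\ldots,y_{i_\beta}\},
\]
and then checks Definition~\ref{def:monot_map} \emph{for ${\bf g}$} directly: condition (ii) of the theorem supplies exactly the required slices ${\bf F}\cap\{x_j=a\}$ and ${\bf F}\cap\{y_i=a\}$ as graphs of monotone maps, and quasi-affinity of $\f$ forces the basis systems of those slices to be independent of $a$ (the injectivity criterion in Definition~\ref{def:matroid} is unaffected by the choice of level). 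Once ${\bf g}$ is monotone, Theorem~\ref{th:def_monotone_map} applied to the common graph ${\bf F}$ gives that $\f$ is monotone as well.

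Your degenerate branch is the price of attacking Theorem~\ref{th:def_monotone_map}(ii) head-on. The reduction you sketch --- replacing the slice $\{y_i=c\}$ by suitable $\{x_{j_\ell}=a\}$-slices because $f_i$ factors through ${\rm span}\{x_{j_1},\ldots,x_{j_\alpha}\}$ --- does not go through as stated: the level set $\{f_i=c\}$ is generally not a coordinate hyperplane, so it cannot simply be swapped for one of the $x_{j_\ell}$-slices covered by (ii), and you give no mechanism for iterating such a swap while preserving the hypotheses. The paper's framework avoids this difficulty because it never has to analyse an arbitrary affine coordinate subspace of $\Real^{n+k}$: it only needs the \emph{single} slices ${\bf F}\cap\{h=b\}$ for $h\in H$, together with the matroid-independence clause, and both are read off from (ii) and quasi-affinity without further case-splitting.
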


\begin{proof}
Suppose that $\f$ is a monotone map.
Then (i) and (ii) are satisfied by Theorem~\ref{th:exchange}, because $\f$ is quasi-affine.

Conversely, assume that a bounded continuous quasi-affine map $\f$ satisfies the properties (i) and (ii).
Because $\f$ is quasi-affine and (i) is satisfied, ${\bf F}$ is the graph of a map
${\bf g}:\> \rho_T({\bf F}) \to {\rm span} \{ x_{j_1}, \ldots ,
x_{j_\alpha}, y_{i_1}, \ldots ,y_{i_\beta}\}$.
Again, since $\f$ is quasi-affine, all monotone (according to (ii)) maps, with graphs
${\bf F} \cap \{ x_j=a \}$ and ${\bf F} \cap \{ y_i=a \}$, have associated matroids that
don't depend on $a$.
It follows from Definition~\ref{def:monot_map} that ${\bf g}$ is monotone.
Since $\f$ and ${\bf g}$ have the same graph, Theorem~\ref{th:def_monotone_map} implies that
that $\f$ is also monotone.
\end{proof}

\section{Graphs of monotone maps are regular cells}\label{sec:regular}

It is known (see Proposition~\ref{pr:triang}) that any compact definable set $X \subset \Real^n$
is definably homeomorphic to a finite simplicial complex
$\widetilde X$, which is a {\em polyhedron} \cite{RS}.
In this section we will use Lemma~\ref{le:pl_hom} and some known results from PL topology
(formulated, for the reader's convenience, in Section~\ref{sec:appendix}) to introduce
or to prove some definable homeomorphisms of definable sets.
Thus, the relation $X \sim Y$ (``$X$ is definably homeomorphic to $Y$'') we will understand
as $\widetilde X \sim_{PL}\> \widetilde Y$  (``$\widetilde X$ is PL homeomorphic to $\widetilde Y$'').

Throughout this section the term ``regular cell'' means ``topologically regular cell''.
In line with the convention above, we will actually use a slightly stronger
version of this notion than the one in Definition~\ref{def:regular}.
Namely, we say that a definable set $V$ is a closed $n$-ball if $\widetilde V \sim_{PL}\> [-1, 1]^n$,
and is an $(n-1)$-sphere if $\widetilde V \sim_{PL}\> ([-1,1]^n \setminus (-1, 1)^n)$.
A definable bounded open set $U \subset \Real^n$ is called (topologically) regular cell
if $\overline U$ is a closed ball,
and the frontier $\overline U \setminus U$ is an $(n-1)$-sphere.
By Proposition~\ref{pr:le1.10}, such $U$ is also a regular cell in the sense of
Definition~\ref{def:regular}.

\begin{theorem}\label{th:regularcell}
The graph ${\bf F} \subset \Real^{n+k}$ of a monotone map $\f: X \to \Real^k$ on a semi-monotone set
$X \subset {\rm span} \{ x_1, \ldots ,x_n \}$ is a regular $n$-cell.
\end{theorem}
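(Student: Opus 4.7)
The plan is to induct on $n$, the dimension of the domain $X$. The base case $n=1$ is handled directly: $X$ is an open interval and by Theorem~\ref{th:vdd} the map $\f$ is quasi-affine, so each component of $\f$ is strictly monotone or constant in $x_1$. The graph $\mathbf{F}$ is therefore the bounded, continuous, injective image of $X$ (the coordinate $x_1$ itself separates points), and $\overline{\mathbf{F}}$ is a topological arc with two distinct endpoints, i.e.\ a regular $1$-cell. Note that taking $k=0$ in the statement reduces to the BGV theorem that semi-monotone sets are regular cells, so the inductive hypothesis at dimension $m<n$ will supply, for free, that every semi-monotone subset of $\Real^m$ is a regular $m$-cell; in particular, the proof subsumes the BGV result.

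For the inductive step, assume the theorem for all dimensions less than $n$. First handle the degenerate case where every component of $\f$ is constant in every $x_j$: then $\mathbf{F}$ is definably homeomorphic to $X$ via $\rho_{\text{span}\{x_1,\ldots,x_n\}}$, and Proposition~\ref{prop:def_semi} exhibits $X$ as $\{(\x',x_n)\mid\x'\in X',\,f(\x')<x_n<g(\x')\}$ for sub- and supermonotone $f,g$ over a semi-monotone $X'\subset\Real^{n-1}$; the inductive hypothesis (applied to $X'$ and to the slices $X\cap\{x_n=c\}$) combined with the PL collaring/product lemmas in Section~\ref{sec:appendix} assembles $\overline X$ as a closed $n$-ball with spherical frontier. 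Otherwise, choose a coordinate $x_j$ in which some $f_i$ is non-constant, let $(a,b)$ be the range of $x_j$ on $X$, and pick an interior value $c\in(a,b)$. By Theorem~\ref{th:matroid}(ii), $\mathbf{G}:=\mathbf{F}\cap\{x_j=c\}$ is the graph of a monotone map of dimension $n-1$, hence a regular $(n-1)$-cell by the inductive hypothesis; and by Theorem~\ref{th:schonflies_for_monotone}, $\mathbf{F}\setminus\mathbf{G}=\mathbf{F}_+\sqcup\mathbf{F}_-$ with each $\mathbf{F}_\pm$ the graph of a monotone map.

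The heart of the argument is to promote this slice decomposition to a ball structure on $\overline{\mathbf{F}}$. The plan is to parametrize $\overline{\mathbf{F}}$ by the one-parameter family $\{\mathbf{F}\cap\{x_j=t\}\}_{t\in[a,b]}$: for every $t$ in the open range, Theorem~\ref{th:matroid} together with the inductive hypothesis gives a regular $(n-1)$-cell slice, while at the extremal values $t=a,b$ the slice collapses to a lower-dimensional monotone graph (again a regular cell of lower dimension by induction). A PL lemma from Section~\ref{sec:appendix} — the standard statement that a continuous one-parameter family of closed PL balls that degenerates at the two endpoints to lower-dimensional balls assembles into a closed $(n)$-ball, together with the Schoenflies/gluing lemma for two balls meeting along a common boundary sub-ball — then combines $\overline{\mathbf{F}_+}$ and $\overline{\mathbf{F}_-}$ across $\overline{\mathbf{G}}$ to yield that $\overline{\mathbf{F}}$ is a closed $n$-ball and $\partial\mathbf{F}$ an $(n-1)$-sphere.

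\textbf{Main obstacle.} The delicate point is not the combinatorial slicing, which is entirely controlled by Theorems~\ref{th:matroid} and~\ref{th:schonflies_for_monotone}, but the behaviour of $\overline{\mathbf{F}}$ over $\partial X$: since $\f$ need not extend continuously to $\overline X$, fibres of $\overline{\mathbf{F}}\to\overline X$ over boundary points can be multi-valued, and one must show that the closures $\overline{\mathbf{F}_+},\overline{\mathbf{F}_-}$ meet exactly along $\overline{\mathbf{G}}$ in the standard PL way required by the gluing lemma. The inductive control over all level sets (and their closures, by reapplying the theorem in lower dimension) plus the quasi-affinity of $\f$ from Theorem~\ref{th:vdd} are what make this feasible without the heavier PL machinery invoked in~\cite{BGV}.
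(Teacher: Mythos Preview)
Your proposal has a genuine gap at the step you yourself flag as the ``heart of the argument.'' After slicing by $\{x_j=c\}$ you obtain $\mathbf{F}_\pm$ as graphs of monotone maps, but these are still $n$-dimensional, so the inductive hypothesis does not apply to them. You then appeal to ``a PL lemma from Section~\ref{sec:appendix} --- the standard statement that a continuous one-parameter family of closed PL balls that degenerates at the two endpoints to lower-dimensional balls assembles into a closed $n$-ball.'' No such lemma appears in the appendix, and in fact no such statement is true without a local triviality hypothesis: a definable family whose generic fibre is an $(n-1)$-ball need not have a ball as total space. (Indeed, this is exactly why naive cylindrical decomposition does not yield regular cells.) Your description of the endpoint behaviour is also off: for $t=a,b$ the open slice $\mathbf{F}\cap\{x_j=t\}$ is empty, and $\overline{\mathbf{F}}\cap\{x_j=t\}$ is a piece of the frontier whose structure you have not analysed --- precisely the multi-valued boundary phenomenon you mention as an obstacle but do not resolve. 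The same problem recurs in your treatment of the degenerate (constant-$\f$) case, where the ``collaring/product lemmas'' you invoke are likewise absent from the appendix.

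The paper's proof takes a completely different route and does not attempt a global fibration argument. It works \emph{locally}: for each point $\mathbf{y}\in\overline{\mathbf{F}}$ (including frontier points) it shows that the intersection of $\mathbf{F}$ with a small generic open box around $\mathbf{y}$ is a regular cell. The key technical step (Lemma~\ref{le:curve_inside}) analyses the link of a frontier point as a cell complex and compares it, via the matroid/tangent-space data at a generic smooth point of $\mathbf{F}$, to a corresponding \emph{linear} cell complex, which is manifestly a regular cell; this is where quasi-affinity is actually used. One then passes to a finite subcover of $\overline{\mathbf{F}}$ by such boxes and glues the resulting regular cells one coordinate direction at a time using Corollary~\ref{union} and Lemma~\ref{complement}. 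The gluing lemmas you cite are used only at this final stage, always to combine two $n$-cells already known to be regular along an $(n-1)$-cell given by the inductive hypothesis --- never to manufacture an $n$-ball from a one-parameter family of $(n-1)$-balls.
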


\begin{remark}
Theorem~2.2 in \cite{BGV} states that every semi-monotone set is a regular cell.
Example~4.3 in \cite{BGV} shows that the graph a bounded submonotone (but not supermonotone) function,
satisfying the condition (ii) in Definition~\ref{def:monotone}, may not be a regular cell.
\end{remark}

We are going to prove Theorem \ref{th:regularcell}
by induction on the dimension $n$ of a regular cell.
For $n=1$ the statement is obvious.
Assume it to be true for $n-1$, we will refer to this statement as to the {\em global inductive
hypothesis}.

\begin{lemma}\label{cut}
Let ${\bf F} \subset \Real^{n+k}$ be a graph of a monotone map.
Let
$${\bf F}_0:={\bf F} \cap X_{j,=,c},\quad {\bf F}_+ :={\bf F} \cap X_{j,>,c},\quad
\text{and}\quad {\bf F}_- :={\bf F} \cap X_{j,<,c}$$
for some $1 \le j \le n+k$ and $c \in \Real$.
Then $\overline {\bf F}_+\cap \overline {\bf F}_-=\overline {\bf F}_0$.
\end{lemma}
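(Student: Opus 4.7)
The plan is to prove the two containments separately; the degenerate cases in which one of ${\bf F}_0, {\bf F}_+, {\bf F}_-$ happens to be empty are handled by direct inspection, so below I treat the non-degenerate situation.

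For $\overline{{\bf F}}_0 \subseteq \overline{{\bf F}}_+ \cap \overline{{\bf F}}_-$, it suffices to show that every point $q \in {\bf F}_0$ lies in both $\overline{{\bf F}}_+$ and $\overline{{\bf F}}_-$ and then to appeal to the closedness of $\overline{{\bf F}}_{\pm}$. Write $q = (a, \f(a))$. If $1 \leq j \leq n$, so the $j$-th coordinate is $x_j$ and $a_j = c$, then, since $X$ is open, the perturbations $a \pm t e_j$ lie in $X$ for all sufficiently small $t > 0$, and by continuity of $\f$ the corresponding graph points lie in ${\bf F}_+$ and ${\bf F}_-$ respectively and converge to $q$ as $t \to 0$. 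If $n < j \leq n+k$, so the $j$-th coordinate is some $y_i$, then $f_i(a) = c$; since ${\bf F}_{\pm}$ are both non-empty, $f_i$ is non-constant on $X$, and by Corollary~\ref{cor:monot-1} and clause (ii) of Definition~\ref{def:monotone} it is strictly monotone in some coordinate $x_l$. Perturbing $a$ along $\pm e_l$ then yields graph points of ${\bf F}_+$ and ${\bf F}_-$ converging to $q$.

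For $\overline{{\bf F}}_+ \cap \overline{{\bf F}}_- \subseteq \overline{{\bf F}}_0$, take $p$ in this intersection; since $z_j \geq c$ on $\overline{{\bf F}}_+$ and $z_j \leq c$ on $\overline{{\bf F}}_-$, we must have $z_j(p) = c$. Pick sequences $p_m^{\pm} \in {\bf F}_{\pm}$ converging to $p$. For $\epsilon > 0$, let $P$ be an open coordinate box of diameter $\epsilon$ centered at $p$; for all sufficiently large $m$, both $p_m^{\pm}$ belong to $P$. By Corollary~\ref{cor:monot_in_box}, ${\bf F} \cap P$ is either empty or the graph of a monotone map on a semi-monotone (hence connected) domain, so it is connected and, being definable, definably path-connected. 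A definable path inside ${\bf F} \cap P$ joining $p_m^+$ and $p_m^-$ has the continuous function $z_j$ taking values both $> c$ and $< c$ along it, so by the intermediate value theorem it meets $\{z_j = c\}$ at some point $q \in {\bf F}_0 \cap P$. Letting $\epsilon \to 0$ produces points of ${\bf F}_0$ converging to $p$.

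I expect the main obstacle to be the second containment, specifically ensuring that a small local piece ${\bf F} \cap P$ is connected enough to carry the intermediate-value argument and produce a nearby point of ${\bf F}_0$. This is precisely the role of Corollary~\ref{cor:monot_in_box}: it identifies ${\bf F} \cap P$ as the graph of a monotone map on a semi-monotone domain, ensuring connectedness (and hence, in the o-minimal setting, definable path-connectedness), so that the IVT can locate a point of ${\bf F}_0$ inside an arbitrarily small box around $p$.
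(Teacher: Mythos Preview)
Your proof is correct and takes essentially the same approach as the paper: the key inclusion $\overline{{\bf F}}_+ \cap \overline{{\bf F}}_- \subseteq \overline{{\bf F}}_0$ is obtained in both cases from the connectedness of ${\bf F} \cap P$ guaranteed by Corollary~\ref{cor:monot_in_box}, the paper phrasing this as a contradiction (a small cube $P_\eps$ disjoint from ${\bf F}_0$ would make ${\bf F}\cap P_\eps$ disconnected) while you phrase the same idea directly via the intermediate value theorem along a path. The paper in fact argues only this one inclusion and leaves the reverse one implicit; your explicit treatment of $\overline{{\bf F}}_0 \subseteq \overline{{\bf F}}_+ \cap \overline{{\bf F}}_-$ is a minor addition.
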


\begin{proof}
Let a point $\x=(x_1, \ldots ,x_n) \in X_{j,=,c} \setminus \overline {\bf F}_0$
belong to $\overline {\bf F}_+\cap \overline {\bf F}_-$.
Then there is an $\eps >0$ such that an open cube centered at $\x$,
$$P_\eps := \bigcap_{1 \le j \le n+k} \{(y_1, \ldots ,y_{n+k})|\> |x_j - y_j| < \eps \}
\subset \Real^{n+k},$$
has non-empty intersections with both ${\bf F}_+$ and ${\bf F}_-$
and the empty intersection with ${\bf F}_0$.
Thus, $P_\eps \cap {\bf F}$ is not connected, which is not possible since, according to
Corollary~\ref{cor:monot_in_box}, $P_\eps \cap {\bf F}$ is the graph of a monotone map.
\end{proof}

\begin{corollary}\label{union}
Let ${\bf F} \subset \Real^{n+k}$ be a graph of a monotone map.
If ${\bf F}_+$ and ${\bf F}_-$ in Lemma \ref{cut} are regular cells, then ${\bf F}$ is a regular cell.
\end{corollary}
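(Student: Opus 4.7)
The plan is to realize $\overline{\bf F}$ as the union of two closed $n$-balls glued along a common $(n-1)$-dimensional sub-ball in their frontier spheres, and then invoke a standard PL ball-gluing lemma (of the kind collected in Section~\ref{sec:appendix}). Concretely, set ${\bf F}_0:= {\bf F}\cap X_{j,=,c}$, so that ${\bf F}= {\bf F}_+\sqcup {\bf F}_0\sqcup {\bf F}_-$ and, by Lemma~\ref{cut}, $\overline{{\bf F}_+}\cap \overline{{\bf F}_-}=\overline{{\bf F}_0}$, hence $\overline{\bf F}=\overline{{\bf F}_+}\cup\overline{{\bf F}_-}$.

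The first step is to identify ${\bf F}_0$ as the graph of a monotone map whose domain is $(n-1)$-dimensional, so the global inductive hypothesis applies. If $j\le n$, Lemma~\ref{le:sign} shows that the restriction of $\f$ to $X\cap\{x_j=c\}$ is monotone, and its graph is exactly ${\bf F}_0$; if $n<j\le n+k$, clause~(i) of Definition~\ref{def:monot_map} together with Theorem~\ref{th:proj_graph} gives ${\bf F}_0$ as the graph of a monotone map ${\bf f}_{i,j',c}$ on a semi-monotone subset of dimension $n-1$. (If ${\bf F}_0=\emptyset$, the corollary is immediate since ${\bf F}$ is then a disjoint union of open sets of different signs of $x_j-c$, and by connectedness of ${\bf F}$ one of ${\bf F}_\pm$ equals ${\bf F}$.) By the inductive hypothesis, $\overline{{\bf F}_0}$ is therefore a closed $(n-1)$-ball whose frontier is an $(n-2)$-sphere.

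The second step is to verify that $\overline{{\bf F}_0}$ sits inside each of the frontier spheres $\partial\overline{{\bf F}_\pm}$ as a collared $(n-1)$-sub-ball. Since ${\bf F}_\pm$ is open in ${\bf F}$ and disjoint from ${\bf F}_0$, the inclusion $\overline{{\bf F}_0}\subset \partial\overline{{\bf F}_\pm}$ is automatic; and the hypothesis that $\overline{{\bf F}_\pm}$ is a closed $n$-ball with frontier an $(n-1)$-sphere, combined with the fact that $\overline{{\bf F}_0}$ is itself a PL $(n-1)$-ball (from the previous step) embedded definably into that $(n-1)$-sphere, lets us invoke the standard Schoenflies-type fact (as recorded in the PL appendix) that such an embedding is locally flat and its image is a PL $(n-1)$-ball face of $\partial\overline{{\bf F}_\pm}$. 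Granting this, the PL ball-gluing lemma yields $\overline{\bf F}=\overline{{\bf F}_+}\cup\overline{{\bf F}_-}$ as a closed $n$-ball, with frontier the $(n-1)$-sphere obtained by gluing $\partial\overline{{\bf F}_+}\setminus\operatorname{int}\overline{{\bf F}_0}$ to $\partial\overline{{\bf F}_-}\setminus\operatorname{int}\overline{{\bf F}_0}$ along their common $(n-2)$-sphere $\partial\overline{{\bf F}_0}$. Hence ${\bf F}$ is a topologically regular $n$-cell.

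The main obstacle is the second step: one must know that the definable homeomorphism of $\overline{{\bf F}_0}$ onto a closed $(n-1)$-ball, furnished by the inductive hypothesis, is compatible (up to PL homeomorphism) with the ball structure on $\overline{{\bf F}_\pm}$, so that $\overline{{\bf F}_0}$ really is a face, and not just a topologically embedded sub-ball of the boundary sphere. This is precisely the kind of compatibility that the convention at the start of Section~\ref{sec:regular} sets up via Proposition~\ref{pr:triang} and Lemma~\ref{le:pl_hom}: after passing to a common simultaneous definable triangulation of $\overline{{\bf F}_+}$, $\overline{{\bf F}_-}$, and $\overline{{\bf F}_0}$, everything becomes a matter of PL subcomplexes, and the gluing lemma applies directly.
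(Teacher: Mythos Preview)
Your proposal is correct and follows the same strategy as the paper: use the global inductive hypothesis (the paper cites Theorem~\ref{th:matroid}~(ii)) to make ${\bf F}_0$ a regular $(n-1)$-cell, then invoke the PL gluing propositions (Propositions~\ref{pr:cor3.16}, \ref{pr:cor3.13}, and \ref{pr:le1.10}) to conclude that $\overline{{\bf F}}$ is a closed $n$-ball and $\overline{{\bf F}}\setminus{\bf F}$ is an $(n-1)$-sphere. The ``main obstacle'' you flag concerning local flatness and collaring is a non-issue under the PL convention fixed at the start of Section~\ref{sec:regular}: after passing to a common definable triangulation, a PL $(n-1)$-ball inside a PL $(n-1)$-sphere is automatically tame (this is exactly what Proposition~\ref{pr:cor3.13} gives), so the paper's proof applies the gluing lemmas directly and is correspondingly shorter.
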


\begin{proof}
We need to prove that $\overline {\bf F}$ is a closed $n$-ball, and that the frontier
$\overline {\bf F} \setminus {\bf F}$ is an $(n-1)$-sphere.
The only non-trivial case is when ${\bf F}_0$ is non-empty.

Since ${\bf F}_0$ is the graph of a monotone function due to Theorem~\ref{th:matroid} (ii),
${\bf F}_0$ is a regular $(n-1)$-cell by the inductive hypothesis.
Thus, $\overline {\bf F}_0$, $\overline {\bf F}_+$, and $\overline {\bf F}_-$ are closed balls,
while $\overline {\bf F}_0 \setminus {\bf F}$ is an $(n-2)$-sphere.
Hence $\overline {\bf F}$ is obtained by gluing together two closed $n$-balls, $\overline {\bf F}_+$
and $\overline {\bf F}_-$ along closed $(n-1)$-ball $\overline {\bf F}_0$
(see Definition~\ref{def:gluing}).
Proposition~\ref{pr:cor3.16} implies that $\overline {\bf F}$ is a closed $n$-ball.

According to Proposition~\ref{pr:cor3.13}, the sets $\overline {\bf F}_+ \setminus {\bf F}
= \partial \overline {\bf F}_+ \setminus {\bf F}_0$ and
$\overline {\bf F}_- \setminus {\bf F} = \partial \overline {\bf F}_- \setminus {\bf F}_0$
are closed $(n-1)$-balls.
The frontier $\overline {\bf F} \setminus {\bf F}$ of ${\bf F}$ is obtained by gluing
$\overline {\bf F}_+ \setminus {\bf F}$ and $\overline {\bf F}_- \setminus {\bf F}$ along the set
$(\overline {\bf F}_+\cap \overline {\bf F}_-) \setminus {\bf F}$ which, by Lemma~\ref{cut},
is equal to $\overline {\bf F}_0 \setminus {\bf F}$ and thus,
is an $(n-2)$-sphere, the common boundary of $\overline {\bf F}_+ \setminus {\bf F}$
and $\overline {\bf F}_- \setminus {\bf F}$.
It follows from Proposition~\ref{pr:le1.10} that $\overline {\bf F} \setminus {\bf F}$
is an $(n-1)$-sphere.
\end{proof}

\begin{lemma}\label{complement}
If ${\bf F}$ and ${\bf F}_-$ in Lemma \ref{cut} are regular cells, then ${\bf F}_+$ is
also a regular cell.
\end{lemma}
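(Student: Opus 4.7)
The plan is to mirror the strategy of Corollary~\ref{union}: identify the frontier of $\mathbf{F}_+$ as an $(n-1)$-sphere and identify $\overline{\mathbf{F}}_+$ as a closed $n$-ball, each by combining the global inductive hypothesis with a suitable PL proposition from Section~\ref{sec:appendix}. The difference is that where Corollary~\ref{union} uses a gluing proposition to assemble $\overline{\mathbf{F}}$ from two halves, here the key input must be a complementation proposition that extracts $\overline{\mathbf{F}}_+$ as the closure of the complement of $\overline{\mathbf{F}}_-$ in $\overline{\mathbf{F}}$.

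One only has to treat the case $\mathbf{F}_0\neq\emptyset$. By Theorem~\ref{th:matroid}(ii) the section $\mathbf{F}_0$ is the graph of a monotone function on a semi-monotone subset of $\Real^{n-1}$, so the global inductive hypothesis makes $\overline{\mathbf{F}}_0$ a closed $(n-1)$-ball with frontier an $(n-2)$-sphere. Since $\overline{\mathbf{F}}_-$ is a closed $n$-ball by hypothesis, $\partial \overline{\mathbf{F}}_-$ is an $(n-1)$-sphere containing $\overline{\mathbf{F}}_0$ as a closed $(n-1)$-ball, so Proposition~\ref{pr:cor3.13} gives that $\overline{\mathbf{F}}_-\cap\partial\overline{\mathbf{F}} = \partial\overline{\mathbf{F}}_-\setminus\mathbf{F}_0$ is a closed $(n-1)$-ball. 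Repeating this inside the $(n-1)$-sphere $\partial\overline{\mathbf{F}}$, which contains $\overline{\mathbf{F}}_-\cap\partial\overline{\mathbf{F}}$ as a closed $(n-1)$-ball, produces $\overline{\mathbf{F}}_+\cap\partial\overline{\mathbf{F}}$ as a closed $(n-1)$-ball meeting $\overline{\mathbf{F}}_0$ exactly along the $(n-2)$-sphere $\partial\overline{\mathbf{F}}_0 = \overline{\mathbf{F}}_0\setminus\mathbf{F}_0$. Proposition~\ref{pr:le1.10} then yields that
$$\overline{\mathbf{F}}_+\setminus\mathbf{F}_+ \;=\; \overline{\mathbf{F}}_0\,\cup\,(\overline{\mathbf{F}}_+\cap\partial\overline{\mathbf{F}})$$
is an $(n-1)$-sphere.

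The main obstacle is showing that $\overline{\mathbf{F}}_+$ itself is a closed $n$-ball. By Lemma~\ref{cut} one identifies $\overline{\mathbf{F}}_+$ with the closure of $\overline{\mathbf{F}}\setminus\overline{\mathbf{F}}_-$, and the ambient configuration is that of a closed $n$-ball $\overline{\mathbf{F}}_-$ sitting inside the closed $n$-ball $\overline{\mathbf{F}}$ so that $\overline{\mathbf{F}}_-\cap\partial\overline{\mathbf{F}}$ is a closed $(n-1)$-ball lying on $\partial\overline{\mathbf{F}}_-$ (as just established). I would then invoke the PL \emph{ball complement} proposition from Section~\ref{sec:appendix} — the natural counterpart of the gluing Proposition~\ref{pr:cor3.16} — which states precisely that in this configuration $\overline{\overline{\mathbf{F}}\setminus\overline{\mathbf{F}}_-}$ is again a closed $n$-ball. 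This finishes the argument; the essential difficulty is marshaling the correct PL input, since all the set-theoretic identifications follow routinely from Lemma~\ref{cut}, Theorem~\ref{th:matroid}(ii), and the inductive hypothesis.
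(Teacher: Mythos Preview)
Your approach is essentially the paper's: the ``ball complement proposition'' you invoke is exactly Proposition~\ref{pr:shiota}, and the frontier is obtained by gluing two closed $(n-1)$-balls along the $(n-2)$-sphere $\partial\overline{\mathbf F}_0$ via Proposition~\ref{pr:le1.10}. Two minor discrepancies: the paper states the hypotheses of Proposition~\ref{pr:shiota} in terms of $X\cap Y=\overline{\mathbf F}_0$ (a closed $(n-1)$-ball in $\partial\overline{\mathbf F}_-$ whose interior $\mathbf F_0$ lies in $\mathbf F$), not in terms of $\overline{\mathbf F}_-\cap\partial\overline{\mathbf F}$ as you do; and the paper reverses your order, first applying Proposition~\ref{pr:shiota} to get that $\overline{\mathbf F}_+$ is a ball and only then using Proposition~\ref{pr:cor3.13} once on $\partial\overline{\mathbf F}_+$, avoiding your detour through $\partial\overline{\mathbf F}_-$ and $\partial\overline{\mathbf F}$.
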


\begin{proof}
Proposition~\ref{pr:shiota} implies that $\overline {\bf F}_+$ is
a closed $n$-ball.
By the global inductive hypothesis of the Theorem~\ref{th:regularcell}, ${\bf F}_0$ is a regular cell.
By Proposition~\ref{pr:cor3.13}, $\overline {\bf F}_+ \setminus {\bf F}=
\partial \overline {\bf F}_+ \setminus {\bf F}_0$ is a closed $(n-1)$-ball.
Then the frontier $\overline {\bf F}_+ \setminus {\bf F}_+$ of ${\bf F}_+$ is obtained by gluing
two closed $(n-1)$-balls, $\overline {\bf F}_+ \setminus {\bf F}$ and $\overline {\bf F}_0$ along the
$(n-2)$-sphere $\overline {\bf F}_0 \setminus {\bf F}$.
Therefore, by Proposition~\ref{pr:le1.10}, the frontier of ${\bf F}_+$ is an $(n-1)$-sphere.
\end{proof}

The following lemma is used on the inductive step of the proof of Theorem~\ref{th:regularcell}
and assumes the global inductive hypothesis (that a graph of a monotone map in less than
$n$ variables is a regular cell).

Introduce the notation $\Real_+:= \{ x \in \Real|\> x>0 \}$.

\begin{lemma}\label{le:curve_inside}
Let ${\bf F} \subset \Real_{+}^{n+k}$ be a graph of a monotone map $\f$ on a semi-monotone set
$X \subset {\rm span} \{ x_1, \ldots ,x_n \}$, such that the origin is in $\overline {\bf F}$.
Let $c(t)=(c_1(t),\dots,c_{n+k}(t))$
be a definable generic curve inside the smooth locus of ${\bf F}$ converging to the origin as $t \to 0$.
Then, for all small positive $t$, the set
$${\bf F}_t :={\bf F} \cap \{x_1<c_1(t),\dots,x_{n+k}<c_{n+k}(t)\}$$
is a cone with the vertex at the origin and a regular cell
as the base, i.e., ${\bf F}_t$ is a regular $n$-cell.
\end{lemma}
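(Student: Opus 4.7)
The plan is to realize $\overline{{\bf F}_t}$ as a definable topological cone with vertex at the origin over a suitable $(n-1)$-dimensional base $B_t$, and then certify that $B_t$ is a regular $(n-1)$-cell by the global inductive hypothesis; this will force ${\bf F}_t$ to be a regular $n$-cell.

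The first step is to apply Corollary~\ref{cor:monot_in_box} to the open box $P := \prod_{j=1}^{n+k} (0,c_j(t))$ (the lower bounds being forced by the assumption ${\bf F}\subset\Real_+^{n+k}$). This yields that ${\bf F}_t = {\bf F}\cap P$ is itself the graph of a monotone map on an $n$-dimensional semi-monotone set, so all structural results of Sections~\ref{sec:monotone_maps} and~\ref{sec:equivalent_defs_of_monotone_maps} become available for ${\bf F}_t$. In particular, for every $s\in(0,t]$, the analogously defined ${\bf F}_s$ is also the graph of a monotone map, and the family $\{{\bf F}_s\}_{s\in(0,t]}$ is a definable, nested, decreasing family exhausting ${\bf F}_t$ and contracting to the origin as $s\to 0^+$.

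The base of the cone is then defined as
\[
B_t := \overline{{\bf F}_t}\cap\bigcup_{j=1}^{n+k}\{x_j=c_j(t)\}.
\]
Each maximal-dimensional piece $\overline{{\bf F}_t}\cap\{x_j=c_j(t)\}$ is, by Theorem~\ref{th:matroid}(ii), the closure of the graph of a monotone map in at most $n-1$ variables, hence a closed $(n-1)$-ball by the global inductive hypothesis; neighbouring pieces meet along lower-dimensional slices that are again regular cells for the same reason. Gluing these pieces along their common boundaries, by repeated use of Lemma~\ref{cut}, Corollary~\ref{union}, Lemma~\ref{complement}, and Proposition~\ref{pr:le1.10}, one shows that $B_t$ is a single closed $(n-1)$-ball. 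The cone homeomorphism is then produced by an o-minimal (Hardt) trivialization of the family $\{{\bf F}_s\}_{s\in(0,t]}$ along the generic smooth curve $c$: this furnishes a definable homeomorphism $\Phi:(0,t]\times B_t \to {\bf F}_t\setminus\{\mathbf{0}\}$ with $\Phi(\{s\}\times B_t)$ equal to the outer boundary of ${\bf F}_s$ inside $\overline{{\bf F}_t}$, and $\Phi$ extends continuously by sending $\{0\}\times B_t$ to $\mathbf{0}$, giving the desired cone structure.

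The main obstacle will be the gluing of the boundary pieces into a single regular $(n-1)$-ball $B_t$: one must run a careful matroid-combinatorial induction indexed by the number of coordinate hyperplanes meeting $\overline{{\bf F}_t}$ non-trivially, tracking which pieces are non-empty and how they attach at corners. This is where the quasi-affinity of $\f$ (Theorem~\ref{th:vdd}) and the full matroid data provided by Theorem~\ref{th:matroid} enter in an essential way. The trivialization step should be standard o-minimal machinery, with the genericity of $c$ inside the smooth locus providing the transversality needed to choose the trivialization radially, so that no additional singularity is introduced at the cone vertex.
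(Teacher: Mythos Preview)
Your framework matches the paper's: reduce to showing that the ``outer boundary'' $B_t=\overline{{\bf F}_t}\cap\bigcup_j\{x_j=c_j(t)\}$ (the paper calls it $C_t$) is a regular $(n-1)$-cell, and conclude via a cone structure. The paper obtains the cone structure not by Hardt trivialization but by the triangulation-of-functions theorem (\cite{Coste}, Th.~4.5); your Hardt argument is sketchy at the vertex, but this is a secondary point.

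The genuine gap is the step you yourself flag as ``the main obstacle'': proving that $B_t$ is a regular $(n-1)$-cell. You propose to glue the pieces $\overline{{\bf F}_t}\cap\{x_j=c_j(t)\}$ sequentially using Corollary~\ref{union}, Lemma~\ref{complement}, and Proposition~\ref{pr:le1.10}. But those results glue two $n$-balls along a single common $(n-1)$-ball face; here the $(n-1)$-dimensional pieces $C_{\{j\},t}$ meet each other along a full flag of lower-dimensional cells $C_{J,t}$ whose incidence pattern is governed by the matroid of $\f$, and after the first gluing the next interface is no longer a single face but a union of several, whose ball-ness is exactly what is in question. You invoke ``a careful matroid-combinatorial induction'' and say quasi-affinity ``enters in an essential way'', but give no mechanism; the difficulty is not bookkeeping but finding a reason why this particular union of balls is a ball rather than, say, an annulus.

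The paper's key idea, which you are missing, is a comparison to a \emph{linear} model. Let $L(t)$ be the tangent space to ${\bf F}$ at the generic smooth point $c(t)$ and form the analogous complex $D_t$ with $L(t)$ in place of ${\bf F}$. Then $D_t$ is trivially a regular cell: it is a cone with apex $c(t)$ over the boundary of a convex polytope. The substance of the proof is to show that the cell complexes $\overline{C_t}$ and $\overline{D_t}$ are abstractly isomorphic, i.e., that $C_{J,t}\neq\emptyset\iff D_{J,t}\neq\emptyset$ for every $J$. The forward implication is where monotonicity (via quasi-affinity, Theorem~\ref{th:vdd}, and the matroid-at-a-generic-tangent description, Remark~\ref{re:tengent}) is actually used: if $C_{J,t}\neq\emptyset$ but its tangent at $c(t)$ lies in some $\{x_\ell=c_\ell\}$, then $f_\ell$ would be independent of the relevant variables, forcing $C_{J,t}\subset\{x_\ell=c_\ell\}$, a contradiction. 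Once the complexes are isomorphic, Lemma~2.18 of \cite{RS} transports the regular-cell property from $D_t$ to $C_t$. Without this linearization trick, your gluing program has no engine.
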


\begin{proof}
For a non-empty subset
$J:=\{ j_1, \ldots ,j_i\} \subset \{ 1, \ldots , n+k \}$, let
$$C_{J,t}:={\bf F} \cap\{ x_{j_1}=c_{j_1}(t),\ldots, x_{j_i}=c_{j_i}(t),\;
x_\ell<c_\ell(t)\;\text{for all}\; \ell \neq j_1,\dots,j_i\}.$$
Due to the theorem on triangulation of definable functions (\cite{Coste}, Th.~4.5),
for all small positive $t$, $\overline {\bf F}_t$ is definably homeomorphic to a closed cone
with the vertex at the origin and the base definably homeomorphic to $\overline C_t$, where $C_t$
is the union of non-empty $C_{J,t}$ for all non-empty $J$.
To complete the proof of the lemma, it is enough to show that $C_t$ is a regular cell.

According to Theorem~\ref{th:matroid} (ii), for every non-empty $J$
the $(n-i)$-dimensional set $C_{J,t}$ is either empty or a graph of a monotone map.
Hence, by the global inductive hypothesis, $C_{J,t}$ is a regular $(n-i)$-cell.
Its closure, $\overline C_{J,t}$, is a closed cell (i.e., is definably homeomorphic to
the closed cube $[0,1]^{n-i}$).
Note that if $J \subset K \subset \{ 1, \ldots , n+k \}$ then the closed  cell
$\overline C_{K,t}$ is a face of $\overline C_{J,t}$.

We prove by induction on $n$ the following claim.
If ${\bf F}$ is a graph in $\Real_{+}^{n+k}$ of a monotone map on
a semi-monotone set $X \subset \Real_{+}^{n}$, and $c(t)$ is a smooth point in ${\bf F}$
(i.e., we don't assume that that the origin is necessarily in $\overline {\bf F}$),
then $C_t$ is a regular cell.
The base for $n=1$ is obvious.
The case of an arbitrary $n$ we prove according to the following plan.
\begin{itemize}
\item[(a)]
Prove that for all $J$ the difference $\overline{C}_{J,t} \setminus \widehat C_{J,t}$,
where $\widehat C_{J,t}:= \bigcup_{K \supset J}C_{K,t}$, is a closed cell.
Then $\overline C_t$ is an $(n-1)$-dimensional cell complex (we will use the same notation
for a complex and its underlying polyhedron), consisting of the closed cells $\overline C_{J,t}$ and
the closed cells $\overline{C}_{J,t} \setminus \widehat C_{J,t}$ for all $J$.
\item[(b)]
Construct a linear cell complex, $\overline D_t$, similar to $\overline C_t$, replacing
${\bf F}$ by the tangent space to ${\bf F}$ at $c(t)$, and prove that $D_t$ is a regular cell.
\item[(c)]
Prove that $\overline C_t$ and $\overline D_t$ are abstractly isomorphic, which implies that
the pairs $(\overline C_t, C_t)$ and $(\overline D_t, D_t)$ are homeomorphic.
\end{itemize}

To prove (a) observe that, since $C_{J,t}$ is a regular $(n-i)$-cell, its boundary
$\partial C_{J,t}$ is the PL $(n-i-1)$-sphere, while by the inductive hypothesis the
difference $\widehat C_{J,t} \setminus C_{J,t}$ is a regular $(n-i-1)$-cell.
Since
$$(\overline C_{J,t} \setminus \widehat C_{J,t}) \cup
(\widehat C_{J,t} \setminus C_{J,t})= \partial C_{J,t},$$
the difference $\overline C_{J,t} \setminus \widehat C_{J,t}$ is a closed cell
by Newman's theorem (Corollary~3.13 in \cite{RS}).
Note that if $J \subset K \subset \{ 1, \ldots , n+k \}$ then the closed cell
$\overline C_{K,t} \setminus \widehat C_{K,t}$ is a face of
$\overline C_{J,t} \setminus \widehat C_{J,t}$.
It is clear that for any $J$ the closed cell
$\overline C_{J,t}$ is a cell complex of the required type.

Now we construct the cell complex to satisfy (b).
Recall that $c(t)$ is a smooth point of ${\bf F}$.
Let $L(t)$ be the tangent space to ${\bf F}$ at $c(t)$.
For every non-empty subset
$$J:=\{ j_1, \ldots ,j_i\} \subset \{ 1, \ldots , n+k \},$$
introduce the $(n-i)$-dimensional convex polyhedron
$$D_{J,t}:=L(t) \cap\{ x_{j_1}=c_{j_1}(t),\ldots, x_{j_i}=c_{j_i}(t),\;
x_\ell<c_\ell(t)\;\text{for all}\; \ell \neq j_1,\dots,j_i\},$$
and let $D_t$ be the union of sets $D_{J,t}$ for all non-empty $J$.
The same argument as in the case of $\overline C_{J,t}$, shows that the difference
$\overline D_{J,t} \setminus \widehat D_{J,t}$, where
$\widehat D_{J,t}:=\bigcup_{K \supset J}D_{K,t}$, is a closed cell.
Then $\overline D_{J,t}$ is a cell complex with closed cells of the kind $\overline D_{K,t}$,
for all $K \supset J$, and the unique closed cell $\overline D_{J,t} \setminus \widehat D_{J,t}$.
It follows that $\overline D_t$ is a cell complex with closed cells of the kind $\overline D_{J,t}$
and $\overline D_{J,t} \setminus \widehat D_{J,t}$ for all non-empty $J$.

Observe that $\overline D_t$ is a cone with the vertex $c(t)$ and the base
$B$ obtained by intersecting $D_t$ with a hyperplane in $\Real^{n+k}$ which separates
$c(t)$ from all other vertices of $\overline D_t$.
The base $B$ is the boundary of a convex polyhedron, and therefore a PL sphere.
It follows, using Lemma~1.10 in \cite{RS}, that $D_t$ is a regular cell.

To prove (c)
we claim that for each $J= \{j_1, \ldots ,j_i \}$, if the cell $C_{J,t}$ is non-empty then the cell
$D_{J,t}$ is non-empty, the converse implication being obvious.
Assume the opposite, i.e., that $C_{J,t} \neq \emptyset$ while $D_{J,t}= \emptyset$.
Then there exists
$\ell \in \{n+1, \ldots , n+k \} \setminus J$ such that the tangent space to $\overline C_{J,t}$ at
$c(t)$ lies in $\{ x_\ell = c_\ell \}$.
Since the map $\f$ is monotone, the component function $f_\ell$ of $\f$ is independent of
each variable $x_r$, where $r \in \{j_1, \ldots ,j_i \} \cap \{ 1, \ldots n \}$.
It follows that the graph $F_\ell$ of $f_\ell$ lies in $\{ x_\ell = c_\ell \}$
(cf. Remark~\ref{re:tengent}), therefore so does $C_{J,t}$.
This is a contradiction since, by the definition, $C_{J,t} \subset \{ x_\ell < c_\ell \}$.

Thus, we have a bijective correspondence between the regular cells in
$C_t$ and $D_t$.
Relating, in addition, for each $J$, the cell $\overline C_{J,t} \setminus \widehat C_{J,t}$
to the cell $\overline D_{J,t} \setminus \widehat D_{J,t}$ we obtain a bijective correspondence
between the closed cells in the cell complexes $\overline C_t$ and $\overline D_t$.
Note that the adjacency relations in both complexes are determined by the same simplicial subcomplex
of the simplex with vertices $\{ 1, \ldots ,n+k \}$, i.e., the complexes have the common nerve.
It follows that the complexes $\overline C_t$ and $\overline D_t$ are abstractly isomorphic (see
\cite{RS}), and their boundaries are abstractly isomorphic.
Lemma~2.18 in \cite{RS} implies that the sets $\overline C_t$ and $\overline D_t$ are homeomorphic,
and the boundaries $\partial \overline C_t$ and $\partial \overline D_t$ are homeomorphic.
Then, by Lemma~1.10 in \cite{RS}, the pairs $(\overline C_t, C_t)$ and
$(\overline D_t, D_t)$ are PL homeomorphic.
Therefore $C_t$ is a regular cell, since $D_t$ is a regular cell.
\end{proof}

We now generalize Lemma~\ref{le:curve_inside}, by removing the assumption that the curve $c(t)$
lies necessarily  inside ${\bf F}$.

\begin{lemma}\label{le:curve}
Let ${\bf F}$ be a graph in $\Real_{+}^{n+k}$ of a monotone map $\f$ on a semi-monotone set
$X \subset {\rm span} \{ x_1, \ldots ,x_n \}$, such that the origin is in $\overline {\bf F}$.
Let $c(t)=(c_1(t),\dots,c_{n+k}(t))$
be a definable curve inside
$\Real^{n+k}_+$ (not necessarily inside ${\bf F}$) converging to the origin as $t \to 0$.
Then, for all small positive $t$,
$${\bf F}_t :={\bf F} \cap \{x_1<c_1(t),\dots,x_n<c_{n+k}(t)\}$$
is a cone with the vertex at the origin and a regular cell $C_t$ as the base,
i.e., ${\bf F}_t$ is a regular $n$-cell.
\end{lemma}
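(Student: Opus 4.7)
The plan is to adapt the proof of Lemma~\ref{le:curve_inside} by replacing the curve $c(t)$ with an auxiliary curve $\tilde c(t)$ inside the smooth locus of ${\bf F}$, while using the matroid structure to match the combinatorial types of the two associated cell complexes.

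First, I would apply the theorem on triangulation of definable functions (\cite{Coste}, Th.~4.5) verbatim, as in Lemma~\ref{le:curve_inside}. This shows that for small $t > 0$, $\overline{{\bf F}_t}$ is definably homeomorphic to a closed cone with vertex at the origin and base $\overline{C_t}$, where $C_t := \bigcup_{J \ne \emptyset} C_{J,t}$ and
$$C_{J,t} := {\bf F} \cap \{x_{j_1} = c_{j_1}(t), \ldots, x_{j_i} = c_{j_i}(t), \; x_\ell < c_\ell(t) \text{ for all } \ell \notin J\}.$$
The triangulation theorem requires no hypothesis on $c(t)$ lying on ${\bf F}$, so this reduction goes through. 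The problem then reduces to showing that $C_t$ is a regular $(n-1)$-cell.

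Next, I would observe that each non-empty $C_{J,t}$ is a graph of a monotone map in $n-|J|$ variables, by Theorem~\ref{th:matroid}(ii) combined with Corollary~\ref{cor:monot_in_box}, hence a regular $(n-|J|)$-cell by the global inductive hypothesis. To assemble $\overline{C_t}$ into a regular cell, I would follow the strategy of Lemma~\ref{le:curve_inside} by constructing an abstractly isomorphic linear model $\overline{D_t}$. Since $c(t)$ need not lie on ${\bf F}$, I would first pick a definable curve $\tilde c(t)$ inside the smooth locus of ${\bf F}$ converging to the origin (such a curve exists because the origin is in $\overline{\bf F}$ and the smooth locus is definable and dense in ${\bf F}$). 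Then I would let $L(t)$ be the tangent space to ${\bf F}$ at $\tilde c(t)$ translated so that it passes through $c(t)$, and define $D_{J,t}$ by intersecting $L(t)$ with the same affine constraints used for $C_{J,t}$.

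The main obstacle, and the crux of the argument, is to verify that $C_{J,t}$ is non-empty if and only if $D_{J,t}$ is non-empty for each $J$. The non-emptiness of $D_{J,t}$ is determined by the matroid of the affine map whose graph is $L(t)$, which by Lemma~\ref{le:tangent} and Remark~\ref{re:tengent} coincides with the matroid ${\bf m}$ associated with $\f$. The non-emptiness of $C_{J,t}$, on the other hand, is controlled by the same matroid ${\bf m}$ via the quasi-affine property of $\f$ (Theorem~\ref{th:vdd}): the existence of points of ${\bf F}$ with coordinates $x_j = c_j(t)$ for $j \in J$ and $x_\ell < c_\ell(t)$ otherwise reduces, via the projection statement of Theorem~\ref{th:proj_graph} and the matroid-theoretic structure of Theorem~\ref{th:matroid}, to whether the complement of $J$ in $\{x_1, \ldots, x_n, y_1, \ldots, y_k\}$ contains a basis of ${\bf m}$. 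Once the abstract isomorphism $\overline{C_t} \cong \overline{D_t}$ is in place, the same argument as in Lemma~\ref{le:curve_inside} (using Lemma~1.10 and Lemma~2.18 of \cite{RS}) shows that $(\overline{C_t}, C_t)$ and $(\overline{D_t}, D_t)$ are PL homeomorphic, and since $D_t$ is a regular cell as the base of a convex polyhedral cone, $C_t$ is a regular cell, so ${\bf F}_t$ is a regular $n$-cell.
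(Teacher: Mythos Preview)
Your approach has a genuine gap at the crucial step: the claim that non-emptiness of $C_{J,t}$ is determined purely by the matroid ${\bf m}$ of $\f$ is false when $c(t) \notin {\bf F}$. Take $n=k=1$ and ${\bf F}=\{(x,x):x>0\}\subset\Real_+^2$, so both $\{x_1\}$ and $\{y_1\}$ are bases of ${\bf m}$. With $c(t)=(t,2t)$ one has $C_{\{1\},t}=\{(t,t)\}\neq\emptyset$ but $C_{\{2\},t}={\bf F}\cap\{x_1<t,\ y_1=2t\}=\emptyset$; with $c(t)=(2t,t)$ the roles are reversed. So which $C_{J,t}$ are non-empty depends on the specific position of $c(t)$ relative to ${\bf F}$, not only on ${\bf m}$. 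Worse, your linear model $D_{J,t}$, built from the tangent space translated to pass through $c(t)$, is degenerate: in the example with $c(t)=(t,2t)$ the translated line is $\{y_1=x_1+t\}$, and both $D_{\{1\},t}$ and $D_{\{2\},t}$ are empty (each equation forces the point $c(t)$ itself, where the remaining strict inequality fails). Thus $D_t=\emptyset$ while $C_t$ is a point, so the proposed abstract isomorphism $\overline{C_t}\cong\overline{D_t}$ does not exist.

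The paper avoids this difficulty entirely and does not attempt to rebuild the cell-complex comparison of Lemma~\ref{le:curve_inside}. Instead it observes that for small $t$ the set $\overline{C_t}$ is a link of the origin in $\overline{\bf F}$; it then chooses a second definable generic curve $s(t)$ inside the smooth locus of ${\bf F}$, applies Lemma~\ref{le:curve_inside} to $s(t)$ to get a regular cell $S_t$ with $\overline{S_t}$ also a link of the origin, and invokes the PL invariance of the link (\cite{RS}, Lemma~2.19) to conclude $\overline{C_t}\sim_{PL}\overline{S_t}$ and $\partial C_t\sim_{PL}\partial S_t$, whence $(\overline{C_t},C_t)\sim_{PL}(\overline{S_t},S_t)$ by Proposition~\ref{pr:le1.10}. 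The comparison is thus topological rather than combinatorial, which is what makes it work for arbitrary $c(t)$.
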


\begin{proof}
We use the notations from the proof of Lemma~\ref{le:curve_inside}.
As in the proof of Lemma~\ref{le:curve_inside}, it is sufficient to show that $C_t$ is a regular cell.

Observe that for a small enough $t$, the set $\overline C_t$ is a link at the origin in
$\overline {\bf F}$.

Choose another definable curve, $s(t)$, converging the origin as $t \to 0$, so that $s(t)$
lies inside the smooth locus of ${\bf F}$ and is generic.
For a non-empty $J= \{ j_1, \ldots ,j_i \}$, let
$$S_{J,t}:={\bf F} \cap\{ x_{j_1}=s_{j_1}(t),\ldots, x_{j_i}=s_{j_i}(t),\;
x_\ell<s_\ell(t)\;\text{for all}\; \ell \neq j_1,\dots,j_i\},$$
and $S_t$ be the union of non-empty sets $S_{J,t}$ for all non-empty $J$.
According to Lemma~\ref{le:curve_inside}, $S_t$ is a regular cell.
For a small enough $t$, the closed cell $\overline S_t$ is also a link at the origin in
$\overline {\bf F}$.
By the theorem on the PL invariance of a link (\cite{RS}, Lemma~2.19), the two links
$\overline C_t$ and $\overline S_t$ are PL homeomorphic.

The same argument shows that the two links, $\partial C_t$ and $\partial S_t$, at the origin
in $\partial {\bf F}$ are PL homeomorphic.
Then, by Lemma~1.10 in \cite{RS}, the pairs $(\overline C_t, C_t)$ and
$(\overline S_t, S_t)$ are PL homeomorphic.
Therefore $C_t$ is a regular cell, since $S_t$ is a regular cell.
\end{proof}

\begin{lemma}\label{general_box}
Let ${\bf F}$ be a graph in $\Real_{+}^{n+k}$ of a monotone map $\f$ on a semi-monotone set
$X \subset {\rm span} \{ x_1, \ldots ,x_n \}$, such that the origin is in $\overline {\bf F}$,
and let $c=(c_1,\dots,c_{n+k}) \in \Real_{+}^{n+k}$.
Then ${\bf F}_c :={\bf F} \cap \{x_1<c_1,\dots,x_n<c_{n+k}\}$
is a regular cell for a generic $c$ with a sufficiently small $\| c \|$.
\end{lemma}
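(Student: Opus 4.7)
The plan is to reduce this single-point statement to the curve version already proved in Lemma~\ref{le:curve} by invoking a definable trivialization of the family $c \mapsto {\bf F}_c$. Consider the map $\Phi:\Real^{n+k}_+ \to $ (the space of pairs), $c \mapsto (\overline{{\bf F}}_c,\, \overline{{\bf F}}_c \setminus {\bf F}_c)$. By the o-minimal definable trivialization theorem (Hardt's theorem in the o-minimal setting, see \cite{VDD}), there is a definable partition of $\Real^{n+k}_+$ into finitely many locally closed strata such that, over each stratum, the pair $(\overline{{\bf F}}_c, \overline{{\bf F}}_c \setminus {\bf F}_c)$ is definably trivial. Let $U$ be the union of those top-dimensional (open) strata whose closures contain the origin; then $U$ is an open set dense in some neighbourhood of $0$ in $\Real^{n+k}_+$.

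Next, I would apply the definable curve selection lemma inside $U$ to produce a definable curve $c(t) = (c_1(t),\dots,c_{n+k}(t)) \subset U$ with $c(t) \to 0$ as $t \to 0^+$. By Lemma~\ref{le:curve} applied to this curve, ${\bf F}_{c(t)}$ is a regular $n$-cell for all sufficiently small $t>0$. In particular, for such $t$, $\overline{{\bf F}}_{c(t)}$ is a closed $n$-ball and its frontier $\overline{{\bf F}}_{c(t)} \setminus {\bf F}_{c(t)}$ is an $(n-1)$-sphere.

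Now fix any $c \in U$ with $\|c\|$ sufficiently small. The definable trivialization over the open stratum containing $c$ (which, by construction of $U$, is the same stratum — or, if necessary, a finite chain of adjacent ones in $U$ connecting $c$ to $c(t)$ for some small $t$) gives a definable homeomorphism of pairs
$$(\overline{{\bf F}}_c,\, \overline{{\bf F}}_c \setminus {\bf F}_c) \;\sim\; (\overline{{\bf F}}_{c(t)},\, \overline{{\bf F}}_{c(t)} \setminus {\bf F}_{c(t)}).$$
After triangulation (Proposition~\ref{pr:triang}), this becomes a PL homeomorphism of pairs, so $\overline{{\bf F}}_c$ is a closed $n$-ball and $\overline{{\bf F}}_c \setminus {\bf F}_c$ is an $(n-1)$-sphere. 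Hence ${\bf F}_c$ is a regular $n$-cell, as required.

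The main obstacle is the trivialization step: one must trivialize not only the set ${\bf F}_c$ but simultaneously its closure and frontier, so that the homeomorphism type of the pair $(\overline{{\bf F}}_c, \overline{{\bf F}}_c \setminus {\bf F}_c)$ is preserved; this is what guarantees that the \emph{regular cell} property (and not merely the homotopy type of ${\bf F}_c$) transfers from $c(t)$ to the generic $c$. This is the reason for applying the trivialization theorem to the full stratified family $\overline{{\bf F}} \cap \{x_1 \le c_1, \dots, x_{n+k} \le c_{n+k}\}$ together with its relative boundary, rather than to ${\bf F}_c$ alone.
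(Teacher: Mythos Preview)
Your approach is essentially identical to the paper's: partition the parameter space $\Real_{+}^{n+k}$ into definable pieces over which the homeomorphism type of the pair $(\overline{{\bf F}}_c,\, \overline{{\bf F}}_c \setminus {\bf F}_c)$ is constant (the paper invokes Corollary~\ref{cor:fibres} where you use Hardt trivialization), then apply curve selection in a top-dimensional piece whose closure contains the origin and transfer the conclusion of Lemma~\ref{le:curve}. One small tightening: instead of ``a finite chain of adjacent strata,'' just select the curve inside the single stratum containing your chosen $c$ (its closure contains $0$ by hypothesis), which is exactly what the paper does.
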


\begin{proof}
Consider a definable set
${\bf F}_{\y}:= {\bf F} \cap \{x_1<y_1,\dots,x_{n+k}<y_{n+k}\} \subset \Real_{+}^{2(n+k)}$
with coordinates $x_1, \ldots ,x_{n+k},y_1, \ldots ,y_{n+k}$ and $\y=(y_1, \ldots, y_{n+k})$.
By Corollary~\ref{cor:fibres}, there is a partition of $\Real_{+}^{n+k}$
(having coordinates $y_1, \ldots, y_{n+k}$) into definable sets $T$
such that if any $T$ is fixed, then
for all $\y \in T$ the closures $\overline {\bf F}_{\y}$ are
definably homeomorphic to the same polyhedron, and the frontiers
$\overline {\bf F}_{\y} \setminus {\bf F}_{\y}$ are definably homeomorphic to the same polyhedron.

For every $n$-dimensional $T$, such that the origin is in $\overline T$, there is, by the curve
selection lemma (\cite{Coste}, Th.~3.2) a definable curve $c(t)$ converging to $0$ as $t \to 0$.
Hence, by Lemma~\ref{le:curve}, for each $c \in T$ the
set $\overline {\bf F}_c$ is  a closed $n$-ball, while
$\overline {\bf F}_c \setminus {\bf F}_c$ is an $(n-1)$-sphere.
Therefore, ${\bf F}_c$ is a regular cell.
\end{proof}

\begin{lemma}\label{cutting1}
Using the notation from Lemma~\ref{general_box}, for
a generic $c \in \Real_{+}^{n+k}$ with a sufficiently small $\| c \|$, the intersection
$${\bf F}_c \cap \bigcap_{1 \le \nu \le \ell}  \{ x_{j_\nu} \sigma_\nu a_\nu \},$$
for any $\ell \le n+k$, $j_\nu \in \{ 1, \ldots , n+k \}$, $\sigma_\nu \in \{ <,> \}$, and for any
generic sequence $a_1 > \cdots > a_\ell$, is either empty or a regular cell.
\end{lemma}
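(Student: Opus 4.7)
The plan is to proceed by induction on the number $m$ of strict lower-bound cuts, i.e., the number of indices $\nu$ for which $\sigma_\nu$ equals $>$. The base case $m=0$ will reduce to Lemma~\ref{general_box}, while the inductive step will apply Lemma~\ref{complement} to the set obtained after dropping a single $>$-cut.

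For the base case, all cuts have the form $\{x_{j_\nu} < a_\nu\}$, so combining them with the upper bounds $\{x_i < c_i\}$ defining ${\bf F}_c$ collapses the whole intersection into the form ${\bf F}_u$, where $u_i = c_i$ if no cut acts on coordinate $i$ and $u_i = \min(c_i,\, \min_{\nu:\, j_\nu = i} a_\nu)$ otherwise. The vector $u$ depends piecewise-linearly on the generic parameters and satisfies $\|u\| \le \|c\|$, so by strengthening the genericity of $c$ to a finite intersection of the dense open conditions provided by Lemma~\ref{general_box} --- one for each combinatorial type of cut --- I can ensure that $u$ lies in the generic set of that lemma, and thus ${\bf F}_u$ is a regular $n$-cell.

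For the inductive step $m \ge 1$, I pick any $\nu_*$ with $\sigma_{\nu_*}$ equal to $>$ and set
\[
{\bf K}' := {\bf F}_c \cap \bigcap_{\nu \ne \nu_*} \{x_{j_\nu} \sigma_\nu a_\nu\},\quad {\bf K}_{-} := {\bf K}' \cap \{x_{j_{\nu_*}} < a_{\nu_*}\},\quad {\bf K}_{+} := {\bf K}' \cap \{x_{j_{\nu_*}} > a_{\nu_*}\}.
\]
Dropping the cut $\nu_*$ entirely, or flipping its sign from $>$ to $<$, each reduces the number of strict lower-bound cuts by one, so by the inductive hypothesis ${\bf K}'$ and ${\bf K}_{-}$ are each either empty or regular cells. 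Iterating the corollary following Theorem~\ref{th:def_monotone_map} shows that ${\bf K}'$ is itself the graph of a monotone map, whence by Theorem~\ref{th:matroid}(ii) its non-empty trace ${\bf K}' \cap \{x_{j_{\nu_*}} = a_{\nu_*}\}$ is the graph of a monotone map on an $(n-1)$-dimensional semi-monotone set, and hence a regular $(n-1)$-cell by the global inductive hypothesis of Theorem~\ref{th:regularcell}. Lemma~\ref{complement}, with ${\bf K}'$ playing the role of ${\bf F}$ and ${\bf K}_{-}$ that of ${\bf F}_{-}$, then yields that ${\bf K}_{+}$ is a regular cell. In the degenerate cases where one of ${\bf K}'$, ${\bf K}_{-}$, or the trace is empty, ${\bf K}_{+}$ is either empty or equal to ${\bf K}'$, both acceptable.

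The main obstacle will lie in the base case: I must pre-commit to a single generic $c$ that is simultaneously compatible with all the derived $u$-vectors arising from the finitely many combinatorial types of cuts, which is achieved by intersecting finitely many dense open conditions. Once this uniform genericity is secured, the induction runs cleanly, because the monotone-map structure is preserved under coordinate half-space cuts (by the corollary of Theorem~\ref{th:def_monotone_map}) and transverse slices are regular by Theorem~\ref{th:matroid}(ii) combined with the global inductive hypothesis of Theorem~\ref{th:regularcell}.
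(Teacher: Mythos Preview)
Your proposal is correct and is essentially the same argument as the paper's, which also absorbs each $<$-cut into the vector $c$ (invoking Lemma~\ref{general_box}) and then peels off $>$-cuts one at a time via Lemma~\ref{complement}. The only cosmetic difference is the induction variable: the paper inducts on the total number $\ell$ of cuts, handling $<$ and $>$ in the same step, whereas you induct on the number $m$ of $>$-cuts, clearing all $<$-cuts at once in the base case.
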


\begin{proof}
It is sufficient to assume that $a_\nu < c_{j_\nu}$ for all $\nu$.
Induction on $\ell$.
For $\ell=1$, the set ${\bf F}_c \cap \{ x_{j_1} < a_1 \}$ is
itself a set of the kind ${\bf F}_c$, and therefore is a regular cell, by Lemma~\ref{general_box}.
Then the set ${\bf F}_c \cap \{ x_{j_1} > a_1 \}$ is a regular cell due to Lemma~\ref{complement}.

By the inductive hypothesis, every non-empty set of the kind
$$
{\bf F}_{c}^{(\ell-1)}:= {\bf F}_c \cap \bigcap_{1 \le \nu \le \ell-1}  \{ x_{j_\nu} \sigma_\nu a_\nu \}
$$
is a regular cell.
Also by the inductive hypothesis, replacing $c_{j_\ell}$ by $a_\ell$ if $a_\ell < c_{j_\ell}$,
every set
${\bf F}_{c}^{(\ell-1)} \cap \{ x_{j_\ell} < a_\ell \}$ is a regular cell.
Since both ${\bf F}_{c}^{(\ell-1)}$ and ${\bf F}_{c}^{(\ell-1)} \cap \{ x_{j_\ell} < a_\ell \}$
are regular cells, so is ${\bf F}_{c}^{(\ell-1)} \cap \{ x_{j_\ell} > a_\ell \}$,
by Lemma~\ref{complement}, which completes the induction.
\end{proof}

\begin{lemma}\label{box}
Let ${\bf F}$ be a graph in $\Real^{n+k}$ of a monotone map,
and let a point $\y=(y_1, \ldots ,y_{n+k})$ belong to ${\bf F}$.
Then for two generic points $a=(a_1, \ldots ,a_{n+k})$,
$b=(b_1, \ldots ,b_{n+k}) \in \Real_{+}^{n+k}$, with sufficiently small $\| a \|$ and $\| b \|$,
the intersection
$${\bf F}_{a,b}:= {\bf F} \cap \bigcap_{1 \le j \le n+k} \{ -a_j < x_j -y_j <b_j \}$$
is a regular cell.
\end{lemma}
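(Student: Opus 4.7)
The plan is to reduce Lemma~\ref{box} to Lemma~\ref{general_box} by decomposing the two-sided box around $\y$ into its $2^{n+k}$ orthants, showing each nonempty orthant piece is a regular $n$-cell, and then assembling the pieces into ${\bf F}_{a,b}$ by iterated application of Corollary~\ref{union}. First I would translate coordinates so that $\y = \mathbf{0}$, making the box $B = \bigcap_j \{-a_j < x_j < b_j\}$ an open neighborhood of the origin with $\mathbf{0} \in {\bf F}$.

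For each sign pattern $\sigma \in \{+,-\}^{n+k}$, the orthant piece ${\bf F}^\sigma := {\bf F} \cap \bigcap_j \{\sigma_j x_j > 0\}$ is the graph of a monotone map by iterating the corollary following Theorem~\ref{th:def_monotone_map}. Sign-flipping those coordinates with $\sigma_j = -$ makes ${\bf F}^\sigma$ a monotone graph in $\Real_+^{n+k}$ whose closure contains the origin --- precisely the hypothesis of Lemma~\ref{general_box}. That lemma supplies a generic small $c^\sigma \in \Real_+^{n+k}$ for which ${\bf F}^\sigma \cap \bigcap_j \{\sigma_j x_j < c_j^\sigma\}$ is a regular $n$-cell. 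Taking $c_j^\sigma := b_j$ when $\sigma_j = +$ and $c_j^\sigma := a_j$ when $\sigma_j = -$ identifies this set with ${\bf F}_{a,b}^\sigma := {\bf F}_{a,b} \cap \bigcap_j \{\sigma_j x_j > 0\}$, so each nonempty orthant piece ${\bf F}_{a,b}^\sigma$ is a regular $n$-cell. The finitely many generic conditions arising from the various $\sigma$'s combine into a single dense open condition on $(a,b)$.

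To assemble the orthant pieces into ${\bf F}_{a,b}$, I would apply Corollary~\ref{union} one coordinate at a time. For $m = 0, 1, \ldots, n+k$ and $\tau \in \{+,-\}^{n+k-m}$, set
\[
{\bf F}^{(m),\tau} := {\bf F}_{a,b} \cap \bigcap_{j > m} \{\tau_{j-m}\, x_j > 0\},
\]
so that $m = 0$ recovers the orthant pieces and $m = n+k$ gives ${\bf F}_{a,b}$. Each ${\bf F}^{(m),\tau}$ is itself the graph of a monotone map by Corollary~\ref{cor:monot_in_box} combined with the sign condition corollary, and the hyperplane $\{x_m = 0\}$ splits it into the pair ${\bf F}^{(m-1),(+,\tau)}$ and ${\bf F}^{(m-1),(-,\tau)}$, which are regular $n$-cells by the inductive step. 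The cut ${\bf F}^{(m),\tau} \cap \{x_m = 0\}$ is the graph of a monotone map in $n-1$ variables by Theorem~\ref{th:matroid}(ii), hence a regular $(n-1)$-cell by the global inductive hypothesis of Theorem~\ref{th:regularcell}. Corollary~\ref{union} then delivers ${\bf F}^{(m),\tau}$ as a regular $n$-cell, and after $n+k$ iterations we conclude that ${\bf F}_{a,b}$ is a regular $n$-cell.

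The main obstacle is the bookkeeping of the iterative gluing: at every stage one must verify that the intermediate set is itself the graph of a monotone map so that Lemma~\ref{cut} (used in Corollary~\ref{union}) applies, and that the codimension-1 cut is a graph of a monotone map in $n-1$ variables so the global inductive hypothesis can be invoked. Degenerate sign patterns --- producing empty orthant pieces, or cases where a ``cut'' fails to drop dimension because $\f$ is constant in some coordinate --- must be handled, but empty pieces in fact only simplify the gluing, while a truly degenerate coordinate may be removed by projecting out the corresponding constant component before running the argument.
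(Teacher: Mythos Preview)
Your proof is correct and follows essentially the same strategy as the paper: translate $\y$ to the origin, decompose ${\bf F}_{a,b}$ into its orthant pieces, show each nonempty piece is a regular cell via Lemma~\ref{general_box}, and then glue the pieces together one coordinate hyperplane at a time using Corollary~\ref{union}. Your treatment is marginally streamlined---you choose $(a,b)$ so that every vertex of the box is itself generic for Lemma~\ref{general_box} (sidestepping the paper's use of Lemma~\ref{cutting1}), and you handle the common faces directly through the global inductive hypothesis on $n$ rather than the paper's auxiliary induction on $m=n+k$---but these are minor variations on the same argument.
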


\begin{proof}
Induction on $m:=n+k$ with the base $m=1$ ($n=0, k=1$) being obvious.

Translate the point $\y$ to the origin.
Let $c=(c_1, \ldots, c_m) \in \Real^m$ be a generic point, and ${\mathbb P}_{m,c}$ the open octant
of $\Real^m$ containing $c$.
By Lemma~\ref{general_box}, if $\| c \|$ is sufficiently small, the set
$${\bf F}_c:= {\bf F} \cap {\mathbb P}_{m,c} \cap \{|x_1| <|c_1|, \ldots , |x_m| < |c_m| \}$$
is either empty or a regular cell.
Choose such a point $c$ in every octant of $\Real^m$.

Choose $(-a_i)$ (respectively, $b_i$) as the maximum (respectively, minimum) among the negative
(respectively, positive) $c_i$ over all octants ${\mathbb P}_{m,c}$.
We now prove that, with so chosen $a$ and $b$, the set ${\bf F}_{a,b}$ is a regular cell.
Induction on a parameter $r=0, \ldots ,m-1$.
For the base of the induction, with $r=0$,
if $d=(d_1, \ldots ,d_m)$ is a vertex of
$$\bigcap_{1 \le j \le m} \{ -a_j < x_j <b_j \}$$
belonging to one of the $2^m=2^{m-r}$ octants of $\Real^m$, then ${\bf F}_d$
is either empty or a regular cell, by Lemma~\ref{cutting1}.
Partition the family of all sets of the kind ${\bf F}_d$ into pairs $({\bf F}_{d'},{\bf F}_{d''})$
so that $d'_1=a_1$, $d''_1=b_1$ and $d'_i=d''_i$ for all $i =2, \ldots, m$.
Whenever the cells ${\bf F}_{d'}$, ${\bf F}_{d''}$ are both non-empty, they have the common
$(n-1)$-face
$${\bf F} \cap (\{ 0 \} \times {\mathbb P}_{m-1,(d'_2, \ldots ,d'_m)})
\cap \{ x_1=0, |x_2| < |d'_2|, \ldots , |x_m|< |d'_m| \},$$
which, by the inductive hypothesis of the induction on $m$, is a regular cell.
Then, according to Corollary~\ref{union}, the union of the common face and
${\bf F}_{d'} \cup {\bf F}_{d''}$ is a regular cell.
Gluing in this way all pairs $({\bf F}_{d'},{\bf F}_{d''})$, we get a family of $2^{m-1}$
either empty or regular cells.
This family is partitioned into pairs of regular cells each of which has the common regular
cell face in the hyperplane $\{ x_2=0 \}$.
On the last step of the induction, for $r=m-1$, we are left with at most two regular cells
having, in the case of the exactly two cells, the common regular cell face in
the hyperplane $\{ x_m=0 \}$.
Gluing these sets along the common face, we get, by Corollary~\ref{union}, the regular cell
${\bf F}_{a,b}$.
\end{proof}

\begin{lemma}\label{box_cutting}
Using the notations from Lemma~\ref{box}, the intersection
\begin{equation}\label{eq:cutting}
V_{a,b}:= {\bf F}_{a,b} \cap \bigcap_{1 \le \nu \le \ell}  \{ x_{j_\nu} \sigma_\nu d_\nu \},
\end{equation}
for any $\ell \le n+k$, $j_\nu \in \{ 1, \ldots , n+k \}$, $\sigma_\nu \in \{ <,> \}$,
and for any generic $d_1 > \cdots > d_\ell$, is either empty or a regular cell.
\end{lemma}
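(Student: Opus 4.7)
The plan is to proceed by induction on $\ell$, emulating the pattern of the proof of Lemma~\ref{cutting1}. The base case $\ell=0$ is exactly Lemma~\ref{box}. For the inductive step, the idea is to absorb the $\ell$-th cut into a modified box so as to reduce to $\ell-1$ cuts; when this absorption is geometrically obstructed on the appropriate side of $y$, one passes to the opposite slice and recovers the desired set via Lemma~\ref{complement}.

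Write $V^{(\ell-1)}:={\bf F}_{a,b}\cap\bigcap_{1\le\nu\le\ell-1}\{x_{j_\nu}\sigma_\nu d_\nu\}$ for the intermediate set with the first $\ell-1$ cuts. The inductive hypothesis makes $V^{(\ell-1)}$ a regular cell, and Corollary~\ref{cor:monot_in_box} makes it the graph of a monotone map, which is exactly the setup needed to slice it by $\{x_{j_\ell}=d_\ell\}$ and invoke Lemma~\ref{complement}. After discarding the trivial subcases in which the $\ell$-th cut contains or misses the effective box, consider $\sigma_\ell=<$ (the case $>$ is symmetric). If $d_\ell-y_{j_\ell}>0$, the cut simply tightens the upper $x_{j_\ell}$-bound of the box around $y$: setting $b'_{j_\ell}:=d_\ell-y_{j_\ell}$ and $b'_j:=b_j$ otherwise, one recognizes $V_{a,b}$ as the intermediate set for $(a,b')$ with the first $\ell-1$ cuts, so the inductive hypothesis applies directly. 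If $d_\ell-y_{j_\ell}<0$, no such absorption is possible on this side, but the opposite slice $V^{(\ell-1)}\cap\{x_{j_\ell}>d_\ell\}$ is of the same form with respect to the modified pair $(a',b)$ where $a'_{j_\ell}:=y_{j_\ell}-d_\ell>0$; by the inductive hypothesis this opposite slice is a regular cell, and then Lemma~\ref{complement} applied with $V^{(\ell-1)}$ as the ambient graph yields that $V_{a,b}$ is itself a regular cell.

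The step I expect to require the most care is the invocation of the inductive hypothesis on the modified boxes $(a,b')$ or $(a',b)$: one must check that these pairs remain generic in the sense of Lemma~\ref{box} and have sufficiently small norms, and that the remaining cutting sequence $d_1>\cdots>d_{\ell-1}$ stays generic relative to the new box. These are routine verifications, because only one coordinate of $(a,b)$ is altered, the new value is positive and bounded by $\|a\|+\|b\|$, and the data were chosen generically at the outset. This genericity-and-smallness bookkeeping is the only real obstacle; the geometric content of the argument is carried entirely by Lemma~\ref{complement} together with the absorption of cuts into box parameters.
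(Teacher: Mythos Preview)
Your proposal is correct and follows essentially the same approach as the paper: the paper's proof consists of the single sentence ``Analogous to the proof of Lemma~\ref{cutting1}'', and your argument is precisely the adaptation of that inductive scheme to the two-sided box ${\bf F}_{a,b}$, with the only new wrinkle being the case split on the sign of $d_\ell-y_{j_\ell}$ to decide which box parameter absorbs the cut. Your concern about genericity and smallness of the modified pair is legitimate but, as you note, routine: the nontriviality of the $\ell$-th cut forces $|d_\ell-y_{j_\ell}|<\max(a_{j_\ell},b_{j_\ell})$, so the modified coordinate is no larger than the original, and generic choice of the $d_\nu$ keeps the modified pair generic.
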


\begin{proof}
Analogous to the proof of Lemmas~\ref{cutting1}.
\end{proof}

\begin{proof}[Proof of Theorem~\ref{th:regularcell}]
For each point $\y \in \overline {\bf F}$ choose generic points $a,\> b \in \Real^{n+k}$ as in
Lemma~\ref{box}, so that the set ${\bf F}_{a,b}$ becomes a regular cell.
We get an open covering of the compact set $\overline {\bf F}$ by the sets of the kind
$$A_{a,b}:= \bigcap_{1 \le j \le n+k} \{ -a_j < x_j -y_j <b_j \},$$
choose any finite subcovering ${\mathcal C}$.
For every $j=1, \ldots , n+k$ consider the finite set $D_j$ of $j$-coordinates $a_j,\> b_j$ for all
sets $A_{a,b}$ in ${\mathcal C}$.
Let
$$\bigcup_{1 \le j \le n+k} D_j= \{ d_1, \ldots ,d_L \}$$
with $d_1 > \cdots > d_L$.
Every set $V_{a,b}$, corresponding to a subset of a cardinality at most $\ell$ of
$\{ d_1, \ldots ,d_L \}$ (see (\ref{eq:cutting})), is a regular cell, by Lemma~\ref{box_cutting}.
The graph ${\bf F}$ is the union of those $V_{a,b}$ and their common faces, for which
$A_{a,b} \in {\mathcal C}$.

The rest of the proof is similar to the final part of the proof of Lemma~\ref{box}.
Use induction on $r=1, \ldots ,n+k$, within the current induction step of the
induction on $m=n+k$.
The base of the induction is for $r=1$.
Let $D_1= \{ d_{1,1}, \ldots , d_{1,k_1} \}$ with $d_{1,1} > \cdots > d_{1,k_1}$.
Partition the finite family of all regular cells $V_{a,b}$,
for all $A_{a,b} \in {\mathcal C}$, into $(|D_1|-1)$-tuples
so that the projections of cells in a tuple on the $x_1$-coordinate are exactly the intervals
\begin{equation}\label{eq:intervals}
(d_{1,k_1}, d_{1,k_1-1}), (d_{1,k_1-1}, d_{1,k_1-2}), \ldots , (d_{1,2}, d_{1,1}),
\end{equation}
and any two cells in a tuple having as projections two consecutive intervals in (\ref{eq:intervals})
have the common $(n-1)$-dimensional face in a hyperplane $\{ x_1= {\rm const} \}$.
This face, by the external inductive hypothesis (of the induction on $m$), is a regular cell.
According to Corollary~\ref{union}, the union of any two consecutive cells and their common face
is a regular cell.
Gluing in this way all consecutive pairs in every $(|D_1|-1)$-tuple, we get a smaller
family of regular cells.
This family, on the next induction step $r=2$, is partitioned into $(|D_2|-1)$-tuples of cells
such that in each of these tuples
two consecutive cells have the common regular cell face in a hyperplane $\{ x_2= {\rm const} \}$.
On the last step, $r=m$, of the induction we are left with one $(|D_n|-1)$-tuple of regular cells
such that two consecutive cells have the common regular cell face in a hyperplane
$\{ x_n= {\rm const} \}$.
Gluing all pairs of consecutive cells along their common faces, we get, by Corollary~\ref{union},
the regular cell ${\bf F}$.
\end{proof}

\subsection*{Graphs of monotone maps over real closed fields}

Fix an arbitrary real closed field $\rm R$.
In \cite{BGV} semi-algebraic semi-monotone sets in ${\rm R}^n$ were considered, and in particular
it was proved that every such set $X$ is a regular cell.
The latter means that there exists a {\em semi-algebraic} homeomorphism
$h:\> (\overline X, X) \to ([-1,1]^n, (-1,1)^n)$
(cf. Definition~\ref{def:regular}).

One can expand these results to semi-algebraic functions and maps over $\rm R$ (the graphs of
such functions and maps are semialgebraic sets).
In particular, the following statement is true.

\begin{theorem}
The graph ${\bf F} \subset {\rm R}^{n+k}$ of a semi-algebraic monotone map $\f: X \to {\rm R}^k$
on a semi-algebraic semi-monotone set $X \subset {\rm R}^n$ is a regular $n$-cell.
\end{theorem}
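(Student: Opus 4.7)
The plan is to deduce the theorem from the already-established Theorem~\ref{th:regularcell} (over $\Real$) by the Tarski--Seidenberg transfer principle, which is the standard device for moving semi-algebraic statements between real closed fields. Concretely, I would fix integers $n,k$ and a finite tuple of degree bounds $D$, and show that the statement

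\emph{``for every semi-algebraic description, of combined degree at most $D$, of a bounded open $X\subset{\rm R}^n$ and a continuous map $\f:X\to{\rm R}^k$, if the defining data witnesses that $X$ is semi-monotone and $\f$ is monotone, then the graph $\bf F$ is a regular $n$-cell''}

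is equivalent to a first-order sentence in the language of ordered rings with parameters in the prime field. Once this is in place, truth over $\Real$ (which is Theorem~\ref{th:regularcell}, applied to the semi-algebraic o-minimal structure) immediately yields truth over $\rm R$.

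The first step is to verify that the hypotheses are uniformly first-order. Semi-monotonicity is a condition on intersections of $X$ with coordinate cones being connected, and semi-algebraic connectivity is first-order expressible by the existence of semi-algebraic paths of bounded description complexity (by Hardt triviality and the fact that definable path-connectedness equals definable connectedness). The inductive clauses in Definition~\ref{def:monot_map} unfold into finitely many such conditions on level-slices, with a uniform complexity bound depending only on $n,k,D$; likewise, the independence condition in Theorem~\ref{th:matroid} can be phrased as a uniform statement about fibres of projections. Hence ``$X$ is semi-monotone and $\f$ is monotone'' is a first-order condition on the parameters.

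Next I encode the conclusion as an existential first-order sentence. ``Regular $n$-cell'' means the existence of a semi-algebraic homeomorphism $h:(\overline{\bf F},{\bf F})\to([-1,1]^n,(-1,1)^n)$. By the effective bounds on semi-algebraic triangulation over real closed fields (see Coste \cite{Coste} or the semi-algebraic framework used in \cite{BGV}), if such an $h$ exists, one exists whose graph is a semi-algebraic set of complexity bounded by a function of $n,k,D$. Therefore the conclusion is equivalent to the existence of a semi-algebraic $h$ inside a fixed complexity class, which is a first-order existential statement over the ordered field. Combining with the first-order hypothesis above yields a single first-order sentence $\Phi_{n,k,D}$ whose truth over $\Real$ is exactly Theorem~\ref{th:regularcell} and whose truth over $\rm R$ is exactly what we want.

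The main obstacle is the uniform bounding of the complexity of the witnessing homeomorphism $h$; without that, ``regular cell'' is not visibly first-order. I would address it by tracing the proof of Theorem~\ref{th:regularcell}: that proof constructs the homeomorphism by finitely many gluing operations (Corollary~\ref{union}, Proposition~\ref{pr:cor3.16}, Proposition~\ref{pr:le1.10}) applied to cells produced by semi-algebraic triangulations with parameters controlled by $n,k,D$. Hence the resulting $h$ has bounded semi-algebraic complexity in terms of $n,k,D$, which is exactly what Tarski--Seidenberg transfer requires. The only alternative, if one prefers to avoid transfer, is to re-verify that every lemma of Sections~\ref{sec:monotone_functions}--\ref{sec:regular} and every appendix result from PL topology used in them remains valid over $\rm R$; this is routine because each ingredient is semi-algebraic and the relevant arguments (curve selection, triangulation, Newman's theorem in the semi-algebraic setting) hold over any real closed field.
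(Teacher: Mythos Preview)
Your proposal is correct and follows essentially the same approach as the paper, which simply invokes the Tarski--Seidenberg transfer principle applied to a first-order formalization of Theorem~\ref{th:regularcell} (referring to the analogous argument for Theorem~3.3 in \cite{BGV}). Your write-up is in fact more detailed than the paper's own one-sentence proof, spelling out why both the hypotheses and the conclusion can be expressed by first-order sentences of bounded complexity.
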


The proof of this theorem is based on applying the Tarski-Seidenberg transfer principle
(Proposition~5.2.3 in \cite{BCR}) to a first-order formalization of the statement of
Theorem~\ref{th:regularcell}, and is completely analogous to the proof of Theorem~3.3 in \cite{BGV}.

\section{Example: toric cubes}\label{sec:toric}

In \cite{Sturmfelsetal2012} Engstr\"om, Hersh and Sturmfels introduced a class of compact
semi-algebraic sets which they call \emph{toric cubes}.

The following definition is adapted from \cite{Sturmfelsetal2012}.
\begin{definition}\label{def:toric}
Let $\mathcal{A} = \{\mathbf{a}_1,\ldots,\mathbf{a}_n\} \subset \mathbb{N}^d$,
and $f_{\mathcal{A}}: [0,1]^d \rightarrow [0,1]^n$ be the map
\[
\mathbf{t} = (t_1,\ldots,t_d) \mapsto (\mathbf{t}^{\mathbf{a}_1},\ldots,
\mathbf{t}^{\mathbf{a}_n}),
\]
where $\mathbf{t}^{\mathbf{a}_i}:= t_{1}^{a_{i,1}} \cdots t_{d}^{a_{i,d}}$ for
$\mathbf{a}_i=(a_{i,1}, \ldots ,a_{i,d})$.
The image of $f_\mathcal{A}$ is called a toric cube.

We call the image of the restriction of $f_\mathcal{A}$ to $(0,1)^d$ an \emph{open toric cube}.
The closure of an open toric cube is a toric cube.
Note that an open toric cube is not necessarily an open subset of $\Real^n$, and need not be
contained in  $(0,1)^n$ (if some $\mathbf{a}_i = \mathbf{0}$).
\end{definition}

In this section we prove the following theorem.

\begin{theorem}
\label{thm:main}
An open toric cube $C \subset \Real^n$
is the graph of a monotone map.
\end{theorem}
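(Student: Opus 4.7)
My plan is to linearize the problem in logarithmic coordinates. Arrange the exponent vectors as the rows of an $n \times d$ matrix $A$ with entries in $\mathbb{N}$; after replacing $d$ by $\operatorname{rank} A$ (which only amounts to reparametrizing $f_{\mathcal A}$ through a coordinate subspace of $\Real^d$, without changing its image) I may assume $\operatorname{rank} A = d$, so that $f_{\mathcal A}|_{(0,1)^d}$ is injective and $\dim C = d$. Setting $u = (\log t_1, \ldots, \log t_d) \in (-\infty, 0)^d$ and $v_i = \log y_i$, the map $f_{\mathcal A}$ becomes the linear map $u \mapsto Au$, and the ``log of the toric cube''
$$\widetilde C := \{Au : u \in (-\infty, 0)^d\}$$
is an open convex polyhedral cone inside the $d$-dimensional column span of $A$.

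Next I pick $d$ linearly independent rows $i_1 < \cdots < i_d$ of $A$, let $B$ be the corresponding invertible $d \times d$ submatrix, and let $\pi_I : \Real^n \to \Real^d$ be the projection to coordinates $I = \{y_{i_1}, \ldots, y_{i_d}\}$. Since $B$ is invertible, $\pi_I|_C$ is a bijection onto $X' := \pi_I(C) \subset (0,1)^d$, so $C$ is the graph of a continuous map $\mathbf{g} : X' \to \Real^{n-d}$ whose components, expressed in the variables $(y_{i_1}, \ldots, y_{i_d})$, are Laurent monomials with exponent vectors given by the rows of $A_{J,\cdot} B^{-1}$, where $J$ is the complement of $I$. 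If some $\mathbf{a}_i$ is the zero vector, the corresponding $g_\ell$ is the constant function $1$, which is monotone in the sense of the paper.

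To verify that $X'$ is semi-monotone, observe that it is open and bounded and that, for any coordinate cone $C' \subset \Real^d$, each defining condition $y_{i_k} \sigma c$ pulls back under log to a linear condition on $u$. Hence $\log(X' \cap C')$ is the intersection of the open convex cone $B \cdot (-\infty, 0)^d$ with a finite family of affine coordinate half-spaces and hyperplanes, which is convex and therefore connected; the componentwise exponential is a homeomorphism, so $X' \cap C'$ is connected. Exactly the same convexity-of-log argument shows that, for every coordinate cone $K \subset \Real^n$, the intersection $C \cap K$ is connected. Finally, $\mathbf{g}$ is quasi-affine: any coordinate subspace $T \subset \Real^n$ of dimension $d$ is the span of $d$ of the coordinates $y_1, \ldots, y_n$, and the projection $\rho_T|_C$ is injective if and only if the corresponding $d$ rows of $A$ form an invertible submatrix if and only if $\rho_T(C)$ is $d$-dimensional. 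Applying Theorem~\ref{th:def_monotone_map} (parts (i) and (iii)), I conclude that $\mathbf{g}$ is monotone, so $C$ is its graph.

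The only subtle point is that $\widetilde C$ and $\log X'$ are unbounded, which precludes directly invoking the paradigm that an affine map on a semi-monotone set is monotone; instead I use convexity in log coordinates solely to obtain connectedness of the relevant sections by coordinate cones, and this connectedness transfers back to the original coordinates via the componentwise exponential homeomorphism. Modulo this observation, the argument is essentially a translation of ``toric = log-linear'' through Theorem~\ref{th:def_monotone_map}.
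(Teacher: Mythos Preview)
Your approach is essentially the paper's: pass to log-coordinates so that $\log C = A\cdot(-\infty,0)^d$ sits in a linear subspace of $\Real^n$, read off quasi-affineness from the linear-algebraic rank condition, use convexity in log-coordinates to get connectedness of every section by a coordinate cone, and invoke Theorem~\ref{th:def_monotone_map}. The paper checks condition~(ii) there (affine coordinate subspaces) while you check~(iii) (coordinate cones), but these are equivalent by that theorem, and the paper is simply terser about exhibiting $C$ explicitly as a graph.

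One small slip: the reduction to $\operatorname{rank} A = d$ by ``reparametrizing $f_{\mathcal A}$ through a coordinate subspace of $\Real^d$'' need not preserve the image. Dropping $d-r$ columns of $A$ replaces the open cone $A\cdot(-\infty,0)^d$ by the sub-cone on the remaining columns, and when the former is not simplicial no choice of $r$ columns recovers it; for instance $A\in\N^{3\times 4}$ with columns $(1,0,1),(0,1,1),(1,2,1),(2,1,1)$ has rank $3$, yet the negatives of the columns span a cone with a genuine quadrilateral cross-section. This does no real damage: simply skip the reduction, set $r=\operatorname{rank} A=\dim C$, pick $r$ linearly independent \emph{rows} for $I$, and run the rest of your argument with $r$ in place of $d$. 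That is exactly what the paper does.
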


As a result we obtain
\begin{corollary}\label{cor:toric}
\label{cor:main}
An open toric cube $C \subset [0,1]^n$, with $\dim (C) = k$,
is semi-algebraically homeomorphic to a standard open ball.
The pair $(\overline{C},C)$ is semi-algebraically homeomorphic to the pair
$([0,1]^{k},(0,1)^{k})$, in particular, a toric cube is semi-algebraically homeomorphic to
a standard closed ball.
\end{corollary}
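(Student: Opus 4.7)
The strategy is short: combine Theorem \ref{thm:main} with the semi-algebraic form of Theorem \ref{th:regularcell} stated at the end of Section \ref{sec:regular}. By Theorem \ref{thm:main} there is a splitting of coordinates in $\Real^n$ and a semi-monotone semi-algebraic set $X$ in a ``domain'' factor $\Real^m$ such that the open toric cube $C$ is precisely the graph of a semi-algebraic monotone map $\f: X \to \Real^{n-m}$. Since the graph of such a map has dimension equal to that of its (open) domain, the hypothesis $\dim C = k$ forces $m=k$, so $X \subset \Real^k$ is a $k$-dimensional semi-monotone set and $\f$ takes values in $\Real^{n-k}$.

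Next, I apply the real closed field version of Theorem \ref{th:regularcell}, which asserts that the graph of a semi-algebraic monotone map on a semi-algebraic semi-monotone set is a (semi-algebraic) topologically regular $k$-cell. Unwinding the definition used throughout Section \ref{sec:regular}, this produces a semi-algebraic homeomorphism $\overline{C} \to [-1,1]^k$ whose restriction sends $\overline{C} \setminus C$ onto $\partial[-1,1]^k$; equivalently, a semi-algebraic homeomorphism of pairs $(\overline{C},C) \to ([-1,1]^k, (-1,1)^k)$. Composing with the obvious affine isomorphism $[-1,1]^k \to [0,1]^k$ gives the desired semi-algebraic homeomorphism $(\overline{C},C) \to ([0,1]^k, (0,1)^k)$, and from this pair homeomorphism the remaining two assertions --- that $C$ is semi-algebraically homeomorphic to a standard open $k$-ball and that the toric cube $\overline{C}$ is semi-algebraically homeomorphic to a standard closed $k$-ball --- follow at once.

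The only genuine obstacle is bookkeeping of semi-algebraicity throughout: one must verify that the homeomorphism produced by the inductive proof of Theorem \ref{th:regularcell} (built up via the gluing results of Section \ref{sec:regular} and the PL-topological propositions of the appendix) can be made semi-algebraic. This is precisely what the Tarski--Seidenberg transfer argument outlined at the end of Section \ref{sec:regular} ensures, by applying the transfer principle to a first-order formalization of the statement of Theorem \ref{th:regularcell}. With that transfer in hand, the present corollary is essentially a repackaging of Theorem \ref{thm:main} and Theorem \ref{th:regularcell}, and no further computation is required.
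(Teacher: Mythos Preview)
Your proof is correct and matches the paper's own argument, which simply says the corollary is an ``immediate consequence of Theorem~\ref{thm:main} and Theorem~\ref{th:regularcell}.'' Your only superfluous step is the appeal to the real closed field transfer at the end of Section~\ref{sec:regular}: since the ambient o-minimal structure here is the semi-algebraic one over $\Real$, ``definable'' already means ``semi-algebraic,'' so Theorem~\ref{th:regularcell} itself delivers a semi-algebraic homeomorphism of pairs without any additional transfer argument.
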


\begin{remark}
Note that the first statement in Corollary~\ref{cor:main} is also proved in \cite{Sturmfelsetal2012},
Proposition~1.
In conjunction with Theorem~2 in \cite{Sturmfelsetal2012},
Corollary~\ref{cor:main} implies that any CW-complex in which the closures of each cell is a toric cube,
must be a regular cell complex, and this answers in the affirmative the
Conjecture~1 in \cite{Sturmfelsetal2012}.
\end{remark}

\begin{proof}[Proof of Theorem \ref{thm:main}]
Let $C \subset [0,1]^n$ be an open toric cube and suppose that $C=f_\mathcal{A}((0,1)^d)$ for
a monomial map $f_\mathcal{A}$ (see Definition~\ref{def:toric}).

Make the coordinate change $z_i = \log(t_i)$ for every $i=1, \ldots, d$, and take the logarithm
of every component of the map $f_\mathcal{A}$ expressed in coordinates $z_i$.
Denote the resulting map by $\log f_\mathcal{A}$.
Then $\log f_\mathcal{A}$ is the restriction of a linear map, namely
\[
\log f_\mathcal{A}: (-\infty,0)^d \rightarrow (-\infty,0)^n,
\]
defined by
\[
\mathbf{z} = (z_1,\ldots,z_d) \mapsto (\mathbf{a}_1\cdot\mathbf{z},
\ldots,\mathbf{a}_n\cdot\mathbf{z}).
\]

Observe that $\log$ (the component-wise logarithm) maps the open cube, $(0,1)^d$ (resp.
$(0,1)^n$) homeomorphically onto $(-\infty,0)^d$ (resp. $(-\infty,0)^n$).
It follows  that the fiber of the orthogonal projection of $C$ to any $k$-dimensional
coordinate subspace is the pre-image under the $\log$ map
of an affine subset of $(-\infty,0)^n$, and is a single point if it is zero-dimensional.
Hence $C$ is a graph of a quasi-affine map (choose any set of $k$ coordinates such that
the image of $C$ under the orthogonal projection to the coordinate subspace of those
coordinates is full dimensional).

Similarly, the intersection of $C$ with any affine coordinate subspace is the
pre-image under the $\log$ map, of an affine subset of $(-\infty,0)^n$ and hence connected.

We proved that $C$ satisfies the conditions of Theorem~\ref{th:def_monotone_map}, hence
$C$ is the graph of a monotone map.
\end{proof}

\begin{proof}[Proof of Corollary \ref{cor:main}]
Immediate consequence of Theorem \ref{thm:main} and
Theorem \ref{th:regularcell}.
\end{proof}

\section{Appendix}\label{sec:appendix}

Here we formulate some propositions, mostly from PL topology, which are used in the proofs above.

\begin{proposition}[\cite{Coste}, Theorem~4.4]\label{pr:triang}
Let $X \subset \Real^n$ be a compact definable set and let $Y_i,\> i=1, \ldots ,k$ be
definable subsets of $X$.
Then there exists a finite simplicial complex $K$ and a definable homeomorphism (triangulation)
$\varphi:\> K \to X$ such that each $Y_i$ is a union of images by $\varphi$ of open simplices of $K$.
\end{proposition}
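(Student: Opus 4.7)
The plan is to prove the statement by induction on the ambient dimension $n$, with the cylindrical cell decomposition (CCD) serving as the combinatorial engine. First, I would apply CCD to partition $\Real^n$ into finitely many definable cells compatible with $X$ and with each $Y_i$; after an appropriate linear change of coordinates placing the cells in ``good position'' with respect to the last axis, every cell is presented over a cell in $\Real^{n-1}$ either as the graph of a continuous definable function or as the open band between two such graphs. Compactness of $X$ ensures only finitely many cells are relevant.

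Next, I would apply the inductive hypothesis to the closure of the projection $\pi(X) \subset \Real^{n-1}$, obtaining a definable triangulation $\varphi' \colon K' \to \overline{\pi(X)}$ compatible with the closures of all projected cells (refining $K'$ so that each projected cell is a union of open simplices). Above each open simplex $\sigma$ of $K'$, the preimage in $\overline{X}$ is then a finite stack of graphs $f_1 < f_2 < \cdots < f_m$ of continuous definable functions on $\sigma$, separated by open bands.

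The final step is a fiberwise simplicial assembly: triangulate each closed prism $\overline{\sigma} \times [0,1]$, realized in $X$ as the band $\{f_i \le t \le f_{i+1}\}$, using the standard prism-into-simplices decomposition based on a globally fixed total order on the vertices of $K'$. The global vertex order guarantees that the prism subdivisions over adjacent simplices of $K'$ agree on common faces. Mapping each abstract simplex into $X$ by linear interpolation between $f_i$ and $f_{i+1}$ (and by evaluation on each graph stratum) produces a definable homeomorphism $\varphi \colon K \to X$, and the compatibility with the $Y_i$ is inherited from the choice of CCD.

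The main obstacle is ensuring that the graph-defining functions $f_i$ extend continuously to $\overline{\sigma}$, since otherwise the per-simplex prism decompositions will fail to glue along common faces. The remedy is to refine the CCD using the o-minimal preparation/trivialization theorems so that each cell function extends definably and continuously to the boundary of its domain, and to choose the linear change of coordinates so that no vertical asymptotes occur over cell boundaries. Once this is arranged, the globally ordered vertex set makes the simplicial gluing automatic, and definability of each piece is preserved throughout.
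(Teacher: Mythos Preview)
The paper does not supply a proof of this proposition at all: it is stated in the Appendix as a quotation of a known result, with the citation \cite{Coste}, Theorem~4.4, standing in lieu of argument. So there is no ``paper's own proof'' to compare against.

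That said, your outline is the standard route to the o-minimal triangulation theorem as presented in \cite{Coste} (and, in a slightly different packaging, in \cite{VDD}): induction on $n$, a cylindrical decomposition of $\Real^n$ adapted to $X$ and the $Y_i$, triangulation of the projection by the inductive hypothesis, and then a prism decomposition over each simplex using a global vertex order to ensure compatibility on shared faces. You have also correctly isolated the genuine technical point, namely arranging that the band-defining functions extend continuously to the closures of the base simplices; in the references this is handled not by invoking trivialization theorems but by refining the base decomposition (via the inductive hypothesis applied to the projected cells and their closures) so that over each closed simplex the ordered graph functions are defined and continuous. If you were to write this out in full, that refinement step is where the care goes; the rest is bookkeeping.
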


Proposition~\ref{pr:triang} implies, in particular, every compact definable set in $\Real^n$ is
a {\em polyhedron} \cite{RS}.

Let $\sim$ (respectively, $\sim_{PL}$) denote the relation of definable (respectively, PL) homeomorphism.

\begin{lemma}\label{le:pl_hom}
Let $X,\> Y \subset \Real^n$ be two definable compact sets, and $\widetilde X,\> \widetilde Y$
two polyhedra, such that $X \sim \widetilde X$, $Y \sim \widetilde Y$, and
$\widetilde X \sim_{PL} \widetilde Y$.
Then $X \sim Y$.
\end{lemma}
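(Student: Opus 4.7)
The plan is to observe that every PL homeomorphism between polyhedra is itself a definable map in the fixed o-minimal structure, and then to chain together three definable homeomorphisms.

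First I would recall that a PL homeomorphism $\varphi:\widetilde X\to\widetilde Y$ between two polyhedra is, by definition, a homeomorphism that is simplicial with respect to some pair of compatible subdivisions of $\widetilde X$ and $\widetilde Y$ into finite simplicial complexes (see \cite{RS}). Concretely, there is a finite partition of $\widetilde X$ into finitely many (closed) simplices $\sigma_1,\ldots,\sigma_N$, and $\varphi$ restricted to each $\sigma_i$ is the restriction of an affine map $\Real^m\to\Real^{m'}$ (where $m,m'$ are the ambient dimensions of $\widetilde X$ and $\widetilde Y$). Each simplex $\sigma_i$ is a semialgebraic subset of $\Real^m$, and each restriction $\varphi|_{\sigma_i}$ is a semialgebraic map. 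Since the fixed o-minimal structure over $\Real$ contains the semialgebraic sets, both the $\sigma_i$ and the affine pieces are definable; hence $\varphi$, being piecewise defined by finitely many definable pieces on a finite definable partition, is a definable homeomorphism.

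Next I would simply compose. Let $h_X:X\to\widetilde X$ and $h_Y:Y\to\widetilde Y$ be definable homeomorphisms witnessing $X\sim\widetilde X$ and $Y\sim\widetilde Y$, and let $\varphi:\widetilde X\to\widetilde Y$ be the PL homeomorphism, which is definable by the previous paragraph. Then
\[
h_Y^{-1}\circ\varphi\circ h_X:X\longrightarrow Y
\]
is a composition of three definable homeomorphisms, hence itself a definable homeomorphism. This yields $X\sim Y$.

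There is no real obstacle here: the only substantive point is the remark that a PL homeomorphism, being piecewise affine on a finite simplicial complex, is automatically semialgebraic and therefore definable in our o-minimal structure. Everything else is a formal composition of homeomorphisms in the category of definable maps. The lemma is essentially a bookkeeping statement that lets the rest of Section~\ref{sec:regular} pass freely between the definable category and the PL category via the triangulation Proposition~\ref{pr:triang}.
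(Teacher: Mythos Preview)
Your proof is correct and follows essentially the same approach as the paper's: the paper's proof is the one-line remark ``Straightforward, since, by Theorems~2.11, 2.14 in \cite{RS}, any PL homeomorphism of compact polyhedra is definable,'' which is exactly the observation you spell out explicitly (a PL homeomorphism is piecewise affine on a finite simplicial subdivision, hence semialgebraic, hence definable) before composing the three definable homeomorphisms.
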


\begin{proof}
Straightforward, since, by Theorems~2.11, 2.14 in \cite{RS}, any PL homeomorphism of compact
polyhedra is definable.
\end{proof}

\begin{definition}\label{def:regular}
A definable set $X$ is called a (topologically) regular $m$-cell if the pair $(\overline X,X)$ is
definably homeomorphic to the pair $([-1, 1]^m, (-1,1)^m)$.
\end{definition}

\begin{definition}\label{def:gluing}
Let $Z$ be a closed (open) PL $(n-1)$-ball, $X$, $Y$ be closed (respectively, open) PL $n$-balls,
and $$\overline Z= \overline X \cap \overline Y= \partial X \cap \partial Y.$$
We say that $X \cup Y \cup Z$ is obtained by {\em gluing} $X$ {\em and} $Y$ {\em along} $Z$.
\end{definition}

\begin{proposition}[\cite{RS}, Lemma~1.10]\label{pr:le1.10}
Let $X$ and $Y$ be closed PL $n$-balls and $h:\> \partial X \to \partial Y$ a PL homeomorphism.
Then $h$ extends to a PL homeomorphism $h_1:\> X \to Y$.
\end{proposition}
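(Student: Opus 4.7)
The plan is to use the classical Alexander trick (cone extension) after reducing to a standard model. The statement is purely PL-topological and does not rely on any of the monotonicity machinery developed above; the only ingredient needed is the definition of a closed PL $n$-ball as being PL homeomorphic to a standard simplex $\Delta^n$ (equivalently, to $[-1,1]^n$ as in Definition~\ref{def:regular}).

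First I would reduce to the case $X = Y = \Delta^n$. Pick PL homeomorphisms $\varphi: X \to \Delta^n$ and $\psi: Y \to \Delta^n$, and set $h' := \psi \circ h \circ \varphi^{-1}$ on $\partial \Delta^n$, a PL self-homeomorphism of the boundary sphere. If $h'$ admits a PL extension $H: \Delta^n \to \Delta^n$, then $h_1 := \psi^{-1} \circ H \circ \varphi$ is the desired extension of $h$. It therefore suffices to extend $h'$. Let $c$ denote the barycenter of $\Delta^n$; every point $p \in \Delta^n$ is uniquely of the form $(1-t)c + tx$ with $x \in \partial \Delta^n$ and $t \in [0,1]$, and I define
$$H((1-t)c + tx) := (1-t)c + t\, h'(x).$$
Continuity and bijectivity of $H$ are immediate from the corresponding properties of $h'$ together with uniqueness of the radial representation.

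To upgrade $H$ from a topological to a PL homeomorphism, I would choose a triangulation $K$ of $\partial \Delta^n$ with respect to which $h'$ is simplicial onto another triangulation $K'$; such $K, K'$ exist because $h'$ is PL (after passing to a common subdivision if necessary). The cones $c \ast K$ and $c \ast K'$ from the barycenter are PL triangulations of $\Delta^n$ since $c$ is interior. By construction $H$ sends each cone simplex $c \ast \sigma$ linearly onto $c \ast h'(\sigma)$, so $H$ is simplicial with respect to $(c \ast K, c \ast K')$ and is therefore PL. The main obstacle is precisely this compatibility step: coning a simplicial homeomorphism of the boundary sphere from an interior point must yield a simplicial homeomorphism of the ball. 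It reduces to the elementary observation that a linear bijection between simplices $\sigma, \tau$ extends uniquely and linearly to a bijection between the cones $c \ast \sigma$ and $c \ast \tau$ over a common apex, so once the cone triangulations are checked to be genuine PL structures on $\Delta^n$, the extension is automatic.
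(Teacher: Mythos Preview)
Your argument is correct: this is the standard Alexander trick, and it is exactly how Lemma~1.10 is proved in \cite{RS}. Note, however, that the paper itself does not prove this proposition at all---it is listed in the Appendix as a known PL-topological fact imported from \cite{RS}, so there is no ``paper's own proof'' to compare against; you have simply reproduced the proof from the cited source.
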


\begin{proposition}[\cite{RS}, Corollary~$3.13_n$]\label{pr:cor3.13}
Let $X$ be a closed PL $n$-ball, $Y$ be a closed $(n+1)$-ball, $\partial Y$ be its boundary
(the PL $n$-sphere), and let $X \subset \partial Y$.
Then $\overline {\partial Y \setminus X}$ is a PL $n$-ball.
\end{proposition}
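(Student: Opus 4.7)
The plan is to prove this Newman-type theorem of PL topology by induction on $n$, using the regular-neighborhood machinery developed earlier in Rourke--Sanderson. The base case $n=0$ is trivial: $Y$ is a PL $1$-ball (an interval), $\partial Y$ consists of two points, $X$ is one of them, and $\overline{\partial Y \setminus X}$ is the other point, a PL $0$-ball.

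For the inductive step, first fix a simplicial triangulation $K$ of $Y$ in which both $X$ and the complementary set $Z := \overline{\partial Y \setminus X}$ are subcomplexes of $\partial Y$. By the inductive hypothesis applied inside $\partial Y$ (itself a PL $n$-sphere) to the $(n-1)$-ball $\partial X \subset \partial X$ viewed as a subset of the $(n-1)$-sphere that is the boundary of a small regular-neighborhood ball of a vertex, the common intersection $\partial X = \partial Z$ is a PL $(n-1)$-sphere. Consequently $Z$ is a compact PL $n$-manifold whose boundary is a PL $(n-1)$-sphere; the main task is to show that $Z$ is actually a PL $n$-ball.

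The core step will realize $Z$ as a cone on $\partial Z$. Pick an interior vertex $v$ of $X$; since $X$ is a PL $n$-ball it simplicially collapses to $v$. Running this collapse inside the $n$-sphere $\partial Y = X \cup Z$ produces a dual collapse of $\partial Y$ onto $Z \cup \{v\}$, exhibiting $\partial Y$ as a regular neighborhood of $Z \cup \{v\}$. By the uniqueness of regular neighborhoods, together with the basic fact that a regular neighborhood of a point in a closed PL $n$-manifold is a PL $n$-ball, one identifies $Z \cup \{v\}$ with the cone $v \ast \partial Z$; stripping off the cone vertex, $Z$ is thus a PL $n$-ball.

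The main obstacle will be making the collapse and regular-neighborhood identification rigorous without circularly invoking the corollary being proved. Rourke--Sanderson sidestep this via their regular-neighborhood theorem (regular neighborhoods of collapsible subpolyhedra of PL manifolds are balls, and any two regular neighborhoods of a given subpolyhedron are PL homeomorphic), whose proof proceeds by a separate induction that does not rely on the present corollary. Once $Z$ is known to be a PL $n$-ball, Proposition~\ref{pr:le1.10} can be used (if desired) to match it with the standard model $[-1,1]^n$ via the identity on $\partial Z = \partial X$, closing the induction.
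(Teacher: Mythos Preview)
The paper does not prove this proposition. It appears in the Appendix (Section~\ref{sec:appendix}) purely as a citation from Rourke--Sanderson \cite{RS}, to be invoked as a black box in Section~\ref{sec:regular}. There is no in-paper argument to compare your proposal against.

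That said, your sketch has real gaps as an independent proof of Newman's theorem. The use of the inductive hypothesis to conclude that $\partial X=\partial Z$ is an $(n-1)$-sphere is both unnecessary and incoherent as written: $\partial X$ is an $(n-1)$-sphere simply because $X$ is a PL $n$-ball, and the sentence describing how the inductive hypothesis is being applied does not parse. More seriously, the ``dual collapse'' step and the subsequent identification are wrong: a simplicial collapse of $X$ to $v$ does not produce a collapse of $\partial Y$ onto $Z\cup\{v\}$, and $Z\cup\{v\}$ cannot be PL homeomorphic to the cone $v\ast\partial Z$, since $v$ lies in the interior of $X$ and hence is an isolated point of $Z\cup\{v\}$, making that space disconnected while the cone is connected. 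The correct Rourke--Sanderson argument (which they run as part of a joint induction with the uniqueness of regular neighborhoods) instead observes that since $X$ collapses to an interior point $p$, the ball $X$ is itself a regular neighborhood of $p$ in the sphere $\partial Y$; uniqueness then provides a PL homeomorphism of $\partial Y$ carrying $X$ to a standard ball (the closed star of a vertex), whose complement in $\partial Y$ is a ball by a direct link computation using the lower-dimensional statement. Your outline gestures at regular-neighborhood uniqueness but applies it to the wrong subpolyhedron.
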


\begin{proposition}[\cite{RS}, Corollary~3.16]\label{pr:cor3.16}
Let $X$, $Y$, $Z$ be  closed PL balls, as in Definition~\ref{def:gluing},
and $X \cup Y$ be obtained by gluing $X$ and $Y$ along $Z$.
Then $X \cup Y$ is a closed PL $n$-ball.
\end{proposition}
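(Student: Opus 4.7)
The plan is to exhibit a PL homeomorphism from $X \cup Y$ onto a standard model that realizes the same kind of gluing, by building the map in stages and invoking Proposition~\ref{pr:le1.10} at each stage. As the model take $D := [-1,1]^n$, split as $D = D^+ \cup D^-$ with $D^{\pm} := D \cap \{\pm x_n \ge 0\}$ and $Z_0 := D \cap \{x_n = 0\}$. Then $D^+$ and $D^-$ are closed PL $n$-balls, $Z_0$ is a closed PL $(n-1)$-ball, and $Z_0 = D^+ \cap D^- = \partial D^+ \cap \partial D^-$, so $D$ is itself an instance of the gluing described in the statement. It therefore suffices to construct a PL homeomorphism $\phi: X \cup Y \to D$.

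I would construct $\phi$ by working outward from the gluing locus. First, since $Z$ and $Z_0$ are both closed PL $(n-1)$-balls, their boundary $(n-2)$-spheres are PL homeomorphic, so Proposition~\ref{pr:le1.10} produces a PL homeomorphism $\phi_0: Z \to Z_0$. Next, by Proposition~\ref{pr:cor3.13}, the sets $X' := \overline{\partial X \setminus Z}$ and $D^+_0 := \overline{\partial D^+ \setminus Z_0}$ are closed PL $(n-1)$-balls with boundaries $\partial Z$ and $\partial Z_0$, respectively. The restriction $\phi_0|_{\partial Z}$ is a PL homeomorphism $\partial X' \to \partial D^+_0$, which by Proposition~\ref{pr:le1.10} extends to a PL homeomorphism $X' \to D^+_0$; together with $\phi_0$ this yields a PL homeomorphism $\partial X \to \partial D^+$. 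One more application of Proposition~\ref{pr:le1.10} extends it to a PL homeomorphism $\phi^+: X \to D^+$ with $\phi^+|_Z = \phi_0$. Repeating the same three steps with $Y$, $D^-$, and the same $\phi_0$ yields $\phi^-: Y \to D^-$ with $\phi^-|_Z = \phi_0$. Because $\phi^+$ and $\phi^-$ agree on $Z = X \cap Y$, they patch into a single map $\phi: X \cup Y \to D$; it is bijective because each $\phi^{\pm}$ sends $Z$ to $Z_0$ and its complement in the corresponding ball to the complement of $Z_0$, and both $\phi$ and $\phi^{-1}$ are PL by the standard piecewise criterion on closed subpolyhedra. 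Thus $X \cup Y \sim_{PL} D$, and $X \cup Y$ is a closed PL $n$-ball.

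There is no deep obstacle in this argument; the entire proof is a cascading application of Propositions~\ref{pr:le1.10} and~\ref{pr:cor3.13}. The only point requiring genuine care is compatibility of the several boundary homeomorphisms where they are later glued. The staged construction is designed precisely to enforce this: by fixing $\phi_0: Z \to Z_0$ once at the start and using only its restriction to $\partial Z$ as boundary data for every subsequent extension, we ensure that $\phi^+$ and $\phi^-$ automatically agree on the gluing locus $Z$. Equivalently, one may view the argument as a double use of Proposition~\ref{pr:le1.10}: first to replace $Z$ inside each of $\partial X$ and $\partial Y$ by the corresponding piece of $\partial D^{\pm}$, then to extend the resulting boundary PL homeomorphism over each of $X$ and $Y$.
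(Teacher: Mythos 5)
The paper states this proposition purely as a citation to Rourke and Sanderson's Corollary~3.16 and gives no proof of its own, so there is nothing in the paper to compare your argument against; you are filling in a proof the authors took from the literature. Your argument is correct, and it is a pleasant observation that the gluing lemma can be derived from the two other PL facts the appendix already records, Propositions~\ref{pr:le1.10} and~\ref{pr:cor3.13}, via the staged extension you describe. The design decision that makes the whole thing work — fixing $\phi_0\colon Z\to Z_0$ once and then feeding only $\phi_0|_{\partial Z}$ as boundary data into every later extension — is precisely what forces $\phi^+|_Z=\phi_0=\phi^-|_Z$, so the two pieces patch into a bijection $X\cup Y\to D$ that is PL (and PL-inverse) on each of the two closed subpolyhedra. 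One point you use tacitly and should state is the identity $\partial X'=X'\cap Z=\partial Z$ (and its analogue for $D^+_0$ and $Z_0$), needed to know that the homeomorphisms on $Z$ and on $X'$ really meet along the same $(n-2)$-sphere, and again when gluing the two boundary pieces into a map $\partial X\to\partial D^+$. The check is short — any point of $\partial X\setminus Z$ lies in the interior of $X'=\overline{\partial X\setminus Z}$, any interior point of $Z$ is missed by $X'$, so $X'\cap Z=\partial Z$, and since $\partial X$ is a closed $(n-1)$-manifold covered by the two balls one has $\partial X'=X'\cap Z$ — but it belongs in the write-up. Two trivial remarks: you do not need Proposition~\ref{pr:le1.10} to produce $\phi_0$, since any two closed PL $(n-1)$-balls are PL homeomorphic by definition; and when you invoke Proposition~\ref{pr:cor3.13} the $n$ there is shifted by one relative to yours. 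Your route, building the homeomorphism piece by piece, is more explicit than the regular-neighbourhood style of argument used in the cited source, and has the virtue of staying entirely inside the toolkit this paper's appendix already assembles.
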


\begin{proposition}[\cite{Shiota}, Lemma~I.3.8]\label{pr:shiota}
Let $X, Y \subset \Real^n$ be compact polyhedra such that $X$ and $X \cup Y$ are
closed PL $n$-balls.
Let $X \cap Y$ be a closed PL $(n-1)$-ball contained in $\partial X$, and let the interior
of $X \cap Y$ be contained in the interior of $X \cup Y$.
Then $Y$ is a closed PL $n$-ball.
\end{proposition}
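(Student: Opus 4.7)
The plan is to reduce this statement to the standard splitting theorem of PL topology: a closed PL $n$-ball splits along a properly embedded PL $(n-1)$-ball in its interior into two closed PL $n$-balls. The main work is in verifying the hypotheses of that splitting theorem and then identifying $X$ as one of the two resulting pieces, forcing $Y$ to be the other.

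First I would analyze the boundary structure. Set $W := X \cup Y$ and $Z := X \cap Y$, and let $Z' := \overline{\partial X \setminus Z}$. By Proposition~\ref{pr:cor3.13}, $Z'$ is a closed PL $(n-1)$-ball, and $Z \cap Z' = \partial Z = \partial Z'$ is a PL $(n-2)$-sphere. A local analysis then shows $X \cap \partial W = Z'$: a point $p \in \mathrm{int}(X)$ has a Euclidean neighborhood in $X$ that is open in $W$ (since $Y \setminus Z$ stays away from $p$), so $p \in \mathrm{int}(W)$; a point $p \in \mathrm{int}(Z')$ has a neighborhood in $W$ contained in $X$ (because $Y \cap X = Z$ is disjoint from $\mathrm{int}(Z')$), making $p$ a boundary point of $W$ since it is one of $X$; and points of $Z$ are handled by the hypothesis $\mathrm{int}(Z) \subset \mathrm{int}(W)$ together with $\partial Z \subset Z' \subset \partial W$.

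The outcome is that $Z$ is properly embedded in $W$, i.e.\ $Z \cap \partial W = \partial Z$. The splitting theorem for PL balls then asserts that $W \setminus Z$ has exactly two connected components $C_1, C_2$, and each $W_i := \overline{C_i}$ is a closed PL $n$-ball with $W_1 \cap W_2 = Z$ and $Z \subset \partial W_i$. This is a standard consequence of the existence of a PL bicollar of $Z$ in $W$ combined with iterated applications of Propositions~\ref{pr:le1.10}, \ref{pr:cor3.13}, and \ref{pr:cor3.16}; I would invoke it rather than reprove it.

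It then remains to identify $X$ with, say, $W_1$. Since $\mathrm{int}(X)$ is open, connected, and disjoint from $Z$, it lies in one component, say $C_1$, so $X \subset W_1$. For the reverse inclusion I would show the frontier of $\mathrm{int}(X)$ inside $\mathrm{int}(W_1) := W_1 \setminus \partial W_1$ is empty: any frontier point lies in $\partial X \setminus Z = \mathrm{int}(Z') \subset \partial W$, which is disjoint from $\mathrm{int}(W_1) \subset \mathrm{int}(W)$. Connectedness of $\mathrm{int}(W_1)$ then forces $\mathrm{int}(X) = \mathrm{int}(W_1)$, hence $X = W_1$. Since $Y = \overline{W \setminus X} = \overline{C_2} = W_2$, we conclude that $Y$ is a closed PL $n$-ball. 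The principal obstacle is the splitting theorem invoked in the third paragraph; the remaining steps are straightforward topological bookkeeping using the propositions already collected in the appendix.
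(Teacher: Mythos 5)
The paper does not prove this proposition; it is cited verbatim from Shiota's book (Lemma I.3.8), so there is no internal proof to compare against. Your boundary analysis (paragraph two) and the identification $X=W_1$, $Y=W_2$ (final paragraph) are essentially sound, but the ``splitting theorem for PL balls'' invoked in paragraph three is a genuine gap. That statement --- a PL $n$-ball $W$ splits along a properly embedded PL $(n-1)$-ball $Z$ into \emph{two} closed PL $n$-balls --- is, after doubling $W$ along $\partial W$, exactly the PL Schoenflies theorem for $S^{n-1}\subset S^n$. This is a deep theorem (its proof for $n\ge 5$ rests on engulfing and the $h$-cobordism theorem, and it is still open for $n=4$), and it most certainly is not ``a standard consequence of the existence of a PL bicollar of $Z$ in $W$ combined with iterated applications of Propositions~\ref{pr:le1.10}, \ref{pr:cor3.13}, and \ref{pr:cor3.16}.'' Those propositions allow you to glue balls together and to extend boundary homeomorphisms, but they give no mechanism for concluding that a subset of a ball cut off by an $(n-1)$-ball is itself a ball --- that is precisely the Schoenflies assertion. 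So your key step is either circular or rests on a statement that is open in dimension four.

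The feature your argument throws away is the hypothesis that $X$ is \emph{itself} a PL $n$-ball, and this is exactly what makes the lemma elementary and dimension-independent. Your own analysis shows $X\cap\partial W = Z'$ is an $(n-1)$-ball, i.e.\ $X$ meets $\partial W$ in a face; hence $X$ is a regular neighborhood in $W$ of any point $p\in\mathrm{int}(Z')$. By uniqueness of regular neighborhoods, $\overline{W\setminus X}$ is ambiently isotopic to the complement of a cone neighborhood of $p$, which is visibly a PL $n$-ball, and $\overline{W\setminus X}=\overline{Y\setminus Z}=Y$ (you should also record that $Y$ is pure $n$-dimensional, so that the last equality holds; this is satisfied in the paper's application). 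The regular-neighborhood argument, not the Schoenflies splitting, is what Shiota's Lemma I.3.8 is actually encoding, and it is the route you should take.
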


\begin{proposition}[\cite{VDD}, Ch.~8, (2.14)]\label{pr:fibres}
Let $X \subset \Real^{m+n}$ be a definable set, and let $\pi:\> \Real^{m+n} \to \Real^m$ be the
projection map.
Then there exist an integer $N>0$ and a definable (not necessarily continuous) map
$f:\> X \to \Delta$, where $\Delta$ is an $(N-1)$-simplex, such that for every $\x \in \Real^m$
the restriction $f_{\x}:\> (X \cap \pi^{-1}(\x))  \to \Delta$ of $f$ to $X \cap \pi^{-1}(\x)$
is a definable homeomorphism onto a union of faces of $\Delta$.
\end{proposition}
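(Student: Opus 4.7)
The plan is to reduce the statement to the combination of two classical results in the o-minimal toolkit: the definable triviality theorem (Hardt's theorem for o-minimal structures) and the triangulation theorem (Proposition~\ref{pr:triang}). The triviality theorem gives a finite partition of the parameter space into definable pieces over which the family is a definable product, so that up to definable fiber-preserving homeomorphism we are dealing with only finitely many fiber types. The triangulation theorem then turns each fiber type into a polyhedron, and the target simplex $\Delta$ will be chosen large enough to contain all these polyhedra as subcomplexes on disjoint vertex sets.

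First, I would apply the o-minimal definable triviality theorem to $\pi\vert_X:X\to\Real^m$. This produces a finite partition $\Real^m=A_1\sqcup\cdots\sqcup A_k$ into definable sets, definable sets $F_i\subset\Real^n$, and definable homeomorphisms $\varphi_i:X\cap\pi^{-1}(A_i)\to A_i\times F_i$ commuting with projection to $A_i$. In particular, for every $\x\in A_i$ the fiber $X\cap\pi^{-1}(\x)$ is definably homeomorphic to $F_i$ via the second component of $\varphi_i$. After compactifying (e.g.\ by one-point compactification of the ambient coordinate), the triangulation theorem gives a finite simplicial complex $K_i$ and a definable homeomorphism from $|K_i|$ onto $\overline{F_i}$ such that $F_i$ is the union of images of some of the open simplices of $K_i$.

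Next, I would assemble a single simplex $\Delta$ containing all the $K_i$ as subcomplexes. Let $v_i$ denote the number of vertices of $K_i$, set $N:=v_1+\cdots+v_k$, and take $\Delta$ to be a geometric $(N-1)$-simplex with vertex set partitioned as $V_1\sqcup\cdots\sqcup V_k$, $|V_i|=v_i$. For each $i$, fix a bijection between the vertices of $K_i$ and $V_i$ and extend affinely; since the target vertices are affinely independent, this produces an affine embedding $\psi_i:|K_i|\hookrightarrow\Delta$ whose image is the subcomplex of $\Delta$ spanned by $V_i$, and each open simplex of $K_i$ is mapped homeomorphically onto an open face of $\Delta$. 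Composing, define $f:X\to\Delta$ piecewise by $f=\psi_i\circ\mathrm{pr}_2\circ\varphi_i$ on $X\cap\pi^{-1}(A_i)$, where $\mathrm{pr}_2:A_i\times F_i\to F_i$. For $\x\in A_i$, the restriction $f_\x$ is the composition of the two definable homeomorphisms $X\cap\pi^{-1}(\x)\cong\{\x\}\times F_i\cong F_i$ with the affine embedding $\psi_i$, so it is a definable homeomorphism of $X\cap\pi^{-1}(\x)$ onto the union of faces $\psi_i(F_i)\subset\Delta$, as required; note that $f$ need not be continuous across the partition, which is allowed.

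The main obstacle is the o-minimal triviality theorem itself, whose proof proceeds by a delicate induction on the dimension using cell decomposition and requires careful handling of the parameter dependence of the fiber homeomorphisms; once it is invoked, the remainder of the argument is a bookkeeping exercise in assembling triangulations into a common simplex. A secondary, minor point is that the triangulations produced by Proposition~\ref{pr:triang} apply to compact sets, so one must triangulate suitable compactifications of the $F_i$ and then restrict, but this is routine given that each $F_i$ is bounded after replacing coordinates by, say, arctangents.
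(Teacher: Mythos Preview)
The paper does not give its own proof of this proposition: it is stated in the Appendix with a citation to van den Dries \cite{VDD}, Ch.~8, (2.14), and is used only as a black box to obtain Corollary~\ref{cor:fibres}. So there is nothing in the paper to compare your argument against.

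Regarding your outline on its own terms: deriving the statement from definable triviality plus triangulation of the finitely many fiber types, and then embedding the resulting complexes on disjoint vertex sets into a single simplex, is a correct and clean way to obtain the map $f$ and the simplex $\Delta$. The one caveat is logical ordering. In van den Dries's development, the trivialization theorem is in Chapter~9 and is proved \emph{using} the triangulation machinery of Chapter~8, of which (2.14) is a part; his proof of (2.14) is a direct parameterized triangulation argument (induction on dimension via cell decomposition) rather than an appeal to Hardt. Thus if one cares about reproducing the result in its original logical position, your route would be circular; but as a derivation from the two standard o-minimal theorems taken as known, it is fine. Your remark about handling noncompact $F_i$ by first passing to a bounded model and triangulating the closure with $F_i$ as a compatible subset is the right fix.
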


\begin{corollary}\label{cor:fibres}
Using the notations from Proposition~\ref{pr:fibres},
let all fibres $X \cap \pi^{-1} (\x)$ be definable compact sets.
Then there is a partition
of $\pi (X)$ into a finite number of definable sets $T \subset \Real^m$ such that all
fibres $X \cap \pi^{-1} (\x)$ with $\x \in T$ are definably homeomorphic, moreover
each of these fibres is definably homeomorphic to the same simplicial complex.
\end{corollary}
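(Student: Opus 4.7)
The plan is to invoke Proposition~\ref{pr:fibres} directly and then partition $\pi(X)$ according to which union of faces of the target simplex $\Delta$ arises as the image of the fiber under the map $f_\x$. Since the $(N-1)$-simplex $\Delta$ has only finitely many faces, it has only finitely many unions of faces, so the resulting partition is automatically finite.

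First I would apply Proposition~\ref{pr:fibres} to produce $N > 0$, the $(N-1)$-simplex $\Delta$, and the definable map $f:\> X \to \Delta$ whose restriction $f_\x$ to each fiber $X \cap \pi^{-1}(\x)$ is a definable homeomorphism onto a union of faces of $\Delta$. Enumerate the distinct unions of faces of $\Delta$ as $U_1, \ldots ,U_s$, and define
$$T_\ell := \{ \x \in \pi(X)|\> f_\x(X \cap \pi^{-1}(\x)) = U_\ell \}$$
for $\ell=1, \ldots ,s$. By construction $\{ T_1, \ldots ,T_s \}$ partitions $\pi(X)$ into finitely many pieces.

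Second I would verify that each $T_\ell$ is definable. For any single face $F$ of $\Delta$, the condition ``$F \subset f_\x (X \cap \pi^{-1}(\x))$'' on $\x \in \pi(X)$ can be written as ``for every $y \in F$ there exists $\z \in X$ with $\pi(\z)=\x$ and $f(\z)=y$'', which is a first-order formula in the definable map $f$ and the definable set $F$; hence it carves out a definable subset of $\pi(X)$. The equality $f_\x(X \cap \pi^{-1}(\x))=U_\ell$ is a Boolean combination of such conditions over the finite set of faces of $\Delta$, so $T_\ell$ is definable.

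Third, for any $\x, \x' \in T_\ell$, both $f_\x$ and $f_{\x'}$ are definable homeomorphisms onto the same set $U_\ell$, so the composition $f_{\x'}^{-1} \circ f_\x$ is a definable homeomorphism from $X \cap \pi^{-1}(\x)$ onto $X \cap \pi^{-1}(\x')$. Moreover, $U_\ell$ is the underlying polyhedron of the subcomplex of the standard simplicial structure of $\Delta$ consisting of the faces whose union it is, so all fibers over $T_\ell$ are definably homeomorphic to this one fixed finite simplicial complex. The only mildly delicate point is the definability of $T_\ell$, which is just unwinding definitions; beyond that the argument is essentially a pigeonhole on the finite face lattice of $\Delta$, so I do not expect any serious obstacle.
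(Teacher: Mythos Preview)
Your proof is correct and follows essentially the same approach as the paper: partition $\pi(X)$ according to which of the finitely many unions of faces of $\Delta$ occurs as the image $f_\x(X\cap\pi^{-1}(\x))$, and use definability of $f$ to conclude that each piece is definable. The paper's version is terser and simply asserts the definability of these pieces, whereas you spell out the first-order formula and the construction of the homeomorphism $f_{\x'}^{-1}\circ f_\x$ explicitly.
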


\begin{proof}
There is a finite number of different unions of faces in $\Delta$.
Since $f$ is definable, the pre-image of any such union under the map $f \circ \pi^{-1}$
is a definable set.
\end{proof}

\end{document}